\documentclass{amsart}
\usepackage{amsmath}
\usepackage{amssymb}
\usepackage[all]{xy}

\RequirePackage{color}
\definecolor{myred}{rgb}{0.75,0,0}
\definecolor{mygreen}{rgb}{0,0.5,0}
\definecolor{myblue}{rgb}{0,0,0.65}

\RequirePackage{ifpdf}
\ifpdf
  \IfFileExists{pdfsync.sty}{\RequirePackage{pdfsync}}{}
  \RequirePackage[pdftex,
   colorlinks = true,
   urlcolor = myblue, 
   citecolor = mygreen, 
   linkcolor = myblue, 
   pdfusetitle,
   pdfauthor = {{Anthony Henderson}},
   pdfpagemode=None,
   bookmarksopen=true]{hyperref}
\else
  \RequirePackage[hypertex]{hyperref}
\fi

\let\oldeqn\equation
\let\endoldeqn\endequation
\renewenvironment{equation}{\oldeqn\setlength{\thickmuskip}{10mu plus 5mu}}{\endoldeqn\relax}
\catcode`\*11
\let\oldeqn*\equation*
\let\endoldeqn*\endequation*
\renewenvironment{equation*}{\oldeqn*\setlength{\thickmuskip}{10mu plus 5mu}}{\endoldeqn*\relax}
\catcode`\*12

\newcommand{\C}{\mathbb{C}}
\newcommand{\Z}{\mathbb{Z}}
\newcommand{\N}{\mathbb{N}}
\newcommand{\R}{\mathbb{R}}
\newcommand{\Aa}{\mathbb{A}}
\newcommand{\Pp}{\mathbb{P}}
\newcommand{\Gg}{\mathbb{G}}

\newcommand{\GL}{\mathrm{GL}}
\newcommand{\SL}{\mathrm{SL}}
\newcommand{\SU}{\mathrm{SU}}

\newcommand{\Mat}{\mathrm{Mat}}

\newcommand{\fg}{\mathfrak{g}}

\newcommand{\fsl}{\mathfrak{sl}}
\newcommand{\fso}{\mathfrak{so}}
\newcommand{\fsp}{\mathfrak{sp}}

\newcommand{\A}{\mathrm{A}}
\newcommand{\D}{\mathrm{D}}
\newcommand{\E}{\mathrm{E}}

\newcommand{\cN}{\mathcal{N}}
\newcommand{\cO}{{\mathcal{O}}}

\newcommand{\Gr}{\mathsf{Gr}}
\newcommand{\Grom}{\mathsf{Gr}_{0}}
\newcommand{\Grbar}{\overline{\mathsf{Gr}}}


\newcommand{\fM}{\mathfrak{M}}

\newcommand{\bv}{\mathbf{v}}
\newcommand{\bw}{\mathbf{w}}
\newcommand{\bi}{\mathbf{i}}
\newcommand{\bj}{\mathbf{j}}

\newcommand{\Bun}{\mathrm{Bun}}

\DeclareMathOperator{\Hom}{Hom}

\newcommand{\simto}{\mathrel{\overset{\sim}{\to}}}

\newcommand{\cD}{\mathcal{D}}
\newcommand{\cS}{\mathcal{S}}
\newcommand{\cE}{\mathcal{E}}
\newcommand{\cF}{\mathcal{F}}
\newcommand{\cU}{\mathcal{U}}

\numberwithin{equation}{section}
\newtheorem{thm}{Theorem}[section]
\newtheorem{lem}[thm]{Lemma}
\newtheorem{prop}[thm]{Proposition}

\theoremstyle{definition}

\theoremstyle{remark}
\newtheorem{rmk}[thm]{Remark}
\newtheorem{ex}[thm]{Example}

\title[Involutions on the affine Grassmannian]{Involutions on the affine Grassmannian and moduli spaces of principal bundles}

\author{Anthony Henderson}
\address{School of Mathematics and Statistics\\
  University of Sydney, NSW 2006\\
  Australia}
\email{anthony.henderson@sydney.edu.au}

\subjclass{Primary 14J60; Secondary 14M15, 17B08}

\thanks{The author's research was supported by ARC Future Fellowship No.~FT110100504.}

\begin{document}

\begin{abstract}
Let $G$ be a simply connected semisimple group over $\mathbb{C}$. We show that a certain involution of an open subset of the affine Grassmannian of $G$, defined previously by Achar and the author, corresponds to the action of the nontrivial Weyl group element of $\SL(2)$ on the framed moduli space of $\mathbb{G}_m$-equivariant principal $G$-bundles on $\mathbb{P}^2$. As a result, the fixed-point set of the involution can be partitioned into strata indexed by conjugacy classes of homomorphisms $N\to G$ where $N$ is the normalizer of $\mathbb{G}_m$ in $\mathrm{SL}(2)$. When $G=\mathrm{SL}(r)$, the strata are Nakajima quiver varieties $\mathfrak{M}_0^{\mathrm{reg}}(\mathbf{v},\mathbf{w})$ of type D.   
\end{abstract}

\maketitle


\section{Introduction}
\label{sect:intro}


\subsection{Outline}
\label{ss:outline}

Let $G$ be a simply connected semisimple algebraic group over $\C$. The ind-group $G[z]:=G(\C[z])$ consists of variety morphisms $g(z):\Gg_a\to G$, where $z$ represents the coordinate on $\Gg_a\cong\Aa^1$. We obtain three closed sub-ind-varieties of $G[z]$ by imposing conditions related to the group structures on $\Gg_a$ and $G$:
\begin{itemize}
\item Requiring that $g(0)=1$, we obtain the first congruence subgroup $G[z]_1$.
\item Requiring that $g(0)=1$ and $g(-z)=g(z)^{-1}$, we obtain, say, $Y\subset G[z]_1$.
\item Requiring that $g(z)$ is a group homomorphism, we obtain a copy of the nilpotent cone $\cN$ of $G$, via the embedding $e:\cN\hookrightarrow Y:x\mapsto \exp(xz)$. 
\end{itemize}

It is known that $G[z]_1$ and $\cN$ parametrize suitable kinds of principal $G$-bundles on a rational surface. The main result of this article provides an analogous interpretation for $Y$, forming the middle vertical arrow of the commutative diagram:
\begin{equation}\label{eqn:intro-diag}
\vcenter{\xymatrix@C=20pt{
\ \cN\  \ar@{^{(}->}[r]^-{e} \ar@{<-}[d] & \ Y\  \ar@{^{(}->}[r] \ar@{<-}[d] & G[z]_1 \ar@{<-}[d]_(.5){\Psi}\\
\Bun_G(\Aa^2/\SL(2))\, \ar@{^{(}->}[r] & \Bun_G(\Aa^2/N)\, \ar@{^{(}->}[r] & \Bun_G(\Aa^2/\Gg_m)
}}
\end{equation}
Here, following the notation of~\cite{bf}, for any subgroup $\Gamma$ of $\SL(2)$, $\Bun_{G}(\Aa^2/\Gamma)$ denotes the moduli space of $\Gamma$-equivariant principal $G$-bundles on $\Pp^2$ equipped with a trivialization on the line at infinity $\Pp^2\smallsetminus\Aa^2$. The left-hand vertical arrow in~\eqref{eqn:intro-diag} refers to Kronheimer's result~\cite[Theorem 1]{kron} concerning the case where $\Gamma=\SL(2)$ (equivalently, by~\cite{donaldson}, the case of `$\SU(2)$-equivariant instantons on $\R^4$'). The right-hand vertical arrow refers to the result of Braverman and Finkelberg~\cite[Theorem 5.2]{bf} concerning the case where $\Gamma$ is the diagonal subgroup $\Gg_m$ of $\SL(2)$ (`$S^1$-equivariant instantons'). For the middle vertical arrow we take $\Gamma$ to be
\[
N:=N_{\SL(2)}(\Gg_m)=\langle\Gg_m,[\begin{smallmatrix}0&1\\-1&0\end{smallmatrix}]\rangle.
\]  

The vertical arrows in~\eqref{eqn:intro-diag} are $G$-equivariant bijective (ind-)variety morphisms which induce variety isomorphisms between corresponding strata. The relevant $G$-stable stratifications on the (ind-)varieties in~\eqref{eqn:intro-diag} will be recalled or introduced below. A unifying principle is that in the column corresponding to $\Gamma\subseteq\SL(2)$, the strata are indexed by certain $G$-conjugacy classes of homomorphisms $\Gamma\to G$.

\subsection{The main theorem}

Henceforth we will set $z=t^{-1}$ and identify $G[t^{-1}]_1$ with the `big opposite Bruhat cell' in the affine Grassmannian $\Gr$ of $G$, namely
\[
\Grom:=G[t^{-1}]_1G[t]/G[t]\subset\Gr:=G[t,t^{-1}]/G[t].
\]
Here $G[t,t^{-1}]$ is short for $G(\C[t,t^{-1}])$, and $G[t]$ and other such notation used hereafter should be interpreted similarly.

In~\cite{ah}, Achar and the author defined an involution $\iota$ of $\Grom=G[t^{-1}]_1$ by
\begin{equation}
\iota(g(t^{-1}))=g(-t^{-1})^{-1}.
\end{equation}
The fixed-point set $(\Grom)^\iota$ of this involution is the ind-variety named $Y$ above.

Recall that $\Gr$ is the disjoint union of certain irreducible quasiprojective varieties $\Gr^\lambda$ (the `Schubert cells'), where $\lambda$ runs over the set $\Lambda^+$ of dominant coweights of $G$.
Consequently, $\Grom$ is the disjoint union of the irreducible quasi-affine varieties
\[ \Gr_0^\lambda:=\Gr^\lambda\cap\Grom. \] 
This is the stratification of $\Gr_0$ mentioned in \S\ref{ss:outline}.

It was shown in~\cite[Lemma 2.2]{ah} that $\iota(\Gr_0^\lambda)=\Gr_0^{-w_0\lambda}$, where $w_0$ denotes the longest element of the Weyl group. Hence $Y=(\Grom)^\iota$ is the disjoint union of the varieties $(\Gr_0^\lambda)^\iota$, where $\lambda$ runs over the set
\[
\Lambda_1^+:=\{\lambda\in\Lambda^+\,|\,w_0\lambda=-\lambda\}.
\]
For general $\lambda\in\Lambda_1^+$, $(\Gr_0^\lambda)^\iota$ may be disconnected, and it is a natural problem to describe the connected components. 

As mentioned above, Braverman and Finkelberg~\cite[Theorem 5.2]{bf} defined a $G$-equivariant bijection 
\begin{equation} 
\Psi:\Bun_{G}(\Aa^2/\Gg_m)\to\Grom
\end{equation}
which restricts, for each $\lambda\in\Lambda^+$, to an isomorphism of varieties
\begin{equation} \label{eqn:bf-isom}
\Bun_{G}^\lambda(\Aa^2/\Gg_m)\simto\Gr_0^\lambda,
\end{equation} 
where $\Bun_G^\lambda(\Aa^2/\Gg_m)$ denotes the connected component of $\Bun_{G}(\Aa^2/\Gg_m)$ defined by requiring that the action of $\Gg_m$ on the fibre of the $G$-bundle at the origin of $\Aa^2$ gives rise to a homomorphism $\Gg_m\to G$ in the $G$-conjugacy class of $\lambda$.

The moduli space $\Bun_{G}(\Aa^2/\Gg_m)$ carries a natural action of $N_{\GL(2)}(\Gg_m)$. Our main result identifies the corresponding action on $\Grom=G[t^{-1}]_1$, and gives a new explanation of the significance of the involution $\iota$: namely, it corresponds to the action of the nontrivial Weyl group element of $\SL(2)$. 
\begin{thm} \label{thm:normalizer}
Under the bijection $\Psi:\Bun_{G}(\Aa^2/\Gg_m)\to\Grom$,
\begin{enumerate}
\item for any $\alpha,\beta\in\C^\times$, $[\begin{smallmatrix}\alpha&0\\0&\beta\end{smallmatrix}]\in\GL(2)$ acts on $\Grom$ by $g(t^{-1})\mapsto g(\alpha\beta t^{-1})$;
\item the element $[\begin{smallmatrix}0&1\\1&0\end{smallmatrix}]\in\GL(2)$ acts on $\Grom$ by $g(t^{-1})\mapsto g(t^{-1})^{-1}$.
\end{enumerate}
Consequently, the element $[\begin{smallmatrix}0&1\\-1&0\end{smallmatrix}]\in\SL(2)$ acts on $\Grom$ via the involution $\iota$.
\end{thm}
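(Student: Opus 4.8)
\emph{Strategy.} The plan is to return to Braverman--Finkelberg's construction of $\Psi$ in~\cite[\S5]{bf} and to track how the group $N_{\GL(2)}(\Gg_m)$ acts on the data that construction takes as input. Recall that $\Gg_m=\{[\begin{smallmatrix}s&0\\0&s^{-1}\end{smallmatrix}]\}$ acts on $\Aa^2=\Spec\C[x,y]$ (the standard representation) with weights $+1$ and $-1$: the origin is the unique fixed point in $\Aa^2$, the two coordinate axes are the two $\Gg_m$-stable lines through it, and $xy$ is, up to a scalar, the only $\Gg_m$-invariant regular function. In Braverman--Finkelberg's construction the two axes govern the two pieces of trivialization data whose comparison yields an element of $\Grom$, while the loop parameter $t$ is governed by the invariant function $xy$; this is the picture I would exploit.

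Granting~(1) and~(2), the final assertion follows at once. Writing $[\begin{smallmatrix}0&1\\-1&0\end{smallmatrix}]=[\begin{smallmatrix}0&1\\1&0\end{smallmatrix}]\,[\begin{smallmatrix}-1&0\\0&1\end{smallmatrix}]$ in $\GL(2)$, part~(1) with $\alpha\beta=-1$ gives that $[\begin{smallmatrix}-1&0\\0&1\end{smallmatrix}]$ acts on $\Grom$ by $g(t^{-1})\mapsto g(-t^{-1})$, and part~(2) gives that $[\begin{smallmatrix}0&1\\1&0\end{smallmatrix}]$ acts by $h(t^{-1})\mapsto h(t^{-1})^{-1}$; the two operations commute, so in either order the composite is $g(t^{-1})\mapsto g(-t^{-1})^{-1}=\iota(g(t^{-1}))$. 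Hence it is enough to prove~(1) and~(2).

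For~(1): $[\begin{smallmatrix}\alpha&0\\0&\beta\end{smallmatrix}]$ lies in the maximal torus of $\GL(2)$, hence commutes with $\Gg_m$, so it acts on $\Bun_G(\Aa^2/\Gg_m)$ without disturbing the $\Gg_m$-equivariant structure; on $\Aa^2$ it is the map $(x,y)\mapsto(\alpha x,\beta y)$, which preserves each coordinate axis and multiplies $xy$ by $\alpha\beta$. Carrying this through the construction, its only effect on the output is to rescale the loop parameter by $\alpha\beta$, giving $g(t^{-1})\mapsto g(\alpha\beta\,t^{-1})$. A reassuring check: $\Gg_m$ itself ($\alpha\beta=1$) then acts trivially, as it must.

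For~(2): the element $[\begin{smallmatrix}0&1\\1&0\end{smallmatrix}]$ is the map $(x,y)\mapsto(y,x)$ on $\Aa^2$; it interchanges the two coordinate axes, conjugates $[\begin{smallmatrix}s&0\\0&s^{-1}\end{smallmatrix}]$ to its inverse, and fixes $xy$. Interchanging the two axes interchanges the two trivializations whose comparison defines the transition function, and hence replaces the transition function by its inverse. The delicate point --- and the main obstacle --- is to check that, with the normalizations built into the definition of $\Psi$ (which of the distinguished points carries the framing along $L_\infty=\Pp^2\smallsetminus\Aa^2$ and which carries the equivariant trivialization, the orientation of the local coordinate there, and how $[\begin{smallmatrix}0&1\\1&0\end{smallmatrix}]$ transports the framing), no compensating substitution $t\mapsto t^{-1}$ or extra rescaling appears, so that the net effect is exactly $g(t^{-1})\mapsto g(t^{-1})^{-1}$. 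Part~(1) is routine by comparison, once $t$ has been matched with $xy$.
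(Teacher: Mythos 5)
Your reduction of the final assertion to parts (1) and (2) is correct and matches the paper, and your treatment of part (1) is adequate: the paper itself notes that (1) is an easy consequence of the definition of $\Psi$, and your heuristic that the loop variable $t$ is the $\Gg_m$-invariant function on $\Aa^2$ is exactly right (in the paper's coordinates $t$ appears as $tu$ on $\Pp^1\times\Pp^1$ and as $z_1z_2/z_0^2$ on $\Pp^2$, both multiplied by $\alpha\beta$ under the diagonal action). The problem is part (2): your proposal stops precisely where the theorem begins. You assert that ``interchanging the two axes interchanges the two trivializations whose comparison defines the transition function, and hence replaces the transition function by its inverse,'' and then you yourself flag as ``the main obstacle'' the verification that no compensating substitution or rescaling appears --- but you never carry out that verification, and moreover the description of $\Psi$ you are inverting is not the one Braverman and Finkelberg actually give. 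Their map recovers $\gamma$ by restricting $(\cF,\Phi)$ to the vertical lines $\{t_0\}\times\Pp^1$, obtaining a morphism $\Pp^1\to\Gr$, and evaluating at $t_0=1$; this definition is genuinely asymmetric in the two coordinate directions, and from it one cannot directly read off the effect of the swap. The statement that $\Psi$ can instead be computed as a comparison of trivialization data attached symmetrically to the two axes is a nontrivial reformulation that has to be proved, not assumed.

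That reformulation is exactly the content of \S\ref{ss:p1}--\S\ref{ss:proof} of the paper: one presents $\Bun_G(\Aa^2)$ as a quotient $Z/H$ of quadruples of transition functions for the four standard charts of $\Pp^1\times\Pp^1$, proves a factorization statement for the transition functions along the divisor at infinity (Propositions \ref{prop:factorization} and \ref{prop:psi-prelim}), shows that $\Psi$ is given by the manifestly swap-antisymmetric formula $((g_{11}^{01})')^{-1}(g_{11}^{10})'=\gamma|_{t\mapsto tu}$ (Proposition \ref{prop:psi}), and then checks that this agrees with the Braverman--Finkelberg map (Proposition \ref{prop:identification}). Only after all of that do parts (1) and (2) become the one-line computations you envisage, since the action of $[\begin{smallmatrix}0&1\\1&0\end{smallmatrix}]$ on quadruples visibly swaps $(g_{11}^{01})'$ with $(g_{11}^{10})'$ while interchanging $t$ and $u$, hence sends $\gamma$ to $\gamma^{-1}$. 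So your strategy points in the right direction, but to complete it you must supply (i) an explicit presentation of $\Psi$ in terms of transition-function data that is symmetric (up to inversion) under the swap, and (ii) a proof that this presentation coincides with the original, asymmetric definition of $\Psi$; these two steps are the substance of the theorem and are missing from your argument.
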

\noindent
Part (1) of Theorem~\ref{thm:normalizer} is an easy consequence of the definition of $\Psi$ in~\cite{bf}, but part (2) is not so obvious, because that definition breaks the symmetry between the horizontal and vertical directions in $\Aa^2$. To prove Theorem~\ref{thm:normalizer}, we will reformulate the definition of $\Psi$ using explicit descriptions of $G$-bundles via transition functions.

\begin{rmk} \label{rmk:slice}
The statement of Braverman and Finkelberg~\cite[Theorem 5.2]{bf} is more general than we have recalled above: they actually showed an isomorphism between a more general subvariety $\Gr_\mu^\lambda$ in the affine Grassmannian, where $\mu$ is not necessarily the zero coweight, and a moduli space $\Bun_{G,\mu}^\lambda(\Aa^2/\Gg_m)$ defined using the embedding $\Gg_m\hookrightarrow G\times\SL(2)$ where the projection onto the $G$ factor is $\mu$. In the case where $w_0\mu=-\mu$, one could ask for an analogue of Theorem~\ref{thm:normalizer}(2) describing the action of the non-identity component of $N_{G\times\GL(2)}(\Gg_m)$. We have not investigated this.
\end{rmk}

From Theorem~\ref{thm:normalizer} we deduce moduli-space interpretations of $(\Grom)^\iota$ and $(\Gr_0^\lambda)^\iota$ in terms of the group $N=N_{\SL(2)}(\Gg_m)=\langle\Gg_m,[\begin{smallmatrix}0&1\\-1&0\end{smallmatrix}]\rangle$, and a decomposition of $(\Gr_0^\lambda)^\iota$ as a disconnected union.
\begin{thm} \label{thm:normalizer2}
The bijection $\Psi$ restricts to a bijection 
\[ \Bun_G(\Aa^2/N)\to(\Grom)^\iota. \] 
For any $\lambda\in\Lambda^+_1$, $\Psi$ restricts to an isomorphism of varieties
\[ \Bun_{G}^\lambda(\Aa^2/N)\simto(\Gr_0^\lambda)^\iota, \] 
where $\Bun_{G}^\lambda(\Aa^2/N):=\Bun_G(\Aa^2/N)\cap\Bun_G^\lambda(\Aa^2/\Gg_m)$. This isomorphism matches up the corresponding pieces of disconnected union decompositions on both sides:
\[ \Bun_G^\lambda(\Aa^2/N)=\coprod_{\xi\in\Xi(\lambda)} \Bun_{G}^\xi(\Aa^2/N)\quad \text{ and }\quad 
(\Gr_0^\lambda)^\iota=\coprod_{\xi\in\Xi(\lambda)} (\Gr_0)^{\iota,\xi}, \]
where $\Xi(\lambda)$ is the set of $G$-conjugacy classes of homomorphisms $N\to G$ whose restriction to $\Gg_m$ is $G$-conjugate to $\lambda$.
\end{thm}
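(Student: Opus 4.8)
The plan is to deduce Theorem~\ref{thm:normalizer2} from Theorem~\ref{thm:normalizer} together with the Braverman--Finkelberg isomorphism~\eqref{eqn:bf-isom}, so the work is largely formal bookkeeping about fixed points. First I would observe that by Theorem~\ref{thm:normalizer} the involution $\iota$ on $\Grom$ corresponds, under $\Psi$, to the action of $n_0:=[\begin{smallmatrix}0&1\\-1&0\end{smallmatrix}]\in\SL(2)$ on $\Bun_G(\Aa^2/\Gg_m)$. Since $N=\langle\Gg_m,n_0\rangle$ and the $\Gg_m$-action is already ``built in'' to the moduli interpretation (a point of $\Bun_G(\Aa^2/\Gg_m)$ is by definition fixed by the residual $\Gg_m$-action, or more precisely the $\Gg_m$-equivariant structure is part of the data), a $\Gg_m$-equivariant bundle extends to an $N$-equivariant bundle exactly when it is fixed by $n_0$ together with a choice of descent datum. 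Thus I would first establish the set-theoretic statement: $\Psi$ restricts to a bijection from the $n_0$-fixed locus of $\Bun_G(\Aa^2/\Gg_m)$ onto $(\Grom)^\iota$, and then identify that fixed locus with $\Bun_G(\Aa^2/N)$. The latter identification requires checking that for a simply connected $G$ the obstruction to lifting an $n_0$-fixed structure to an honest $N$-equivariant structure vanishes, and that such a lift is unique — this is where I would invoke the triviality of the relevant $H^1$ or the connectedness/simple-connectedness hypotheses, following the same pattern Braverman--Finkelberg use to rigidify the $\Gg_m$-case. This set-theoretic bijection restricts to isomorphisms of varieties on the $\lambda$-components because~\eqref{eqn:bf-isom} already is one and taking fixed points of a finite-order automorphism of a variety is a closed (hence again variety-theoretic) operation, compatible with the $\lambda$-decomposition since $\iota$ preserves each $\Gr_0^\lambda$ for $\lambda\in\Lambda_1^+$ by~\cite[Lemma 2.2]{ah}.

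For the disconnected-union decomposition, the idea is that the connected components on each side are detected by a discrete invariant, namely the $G$-conjugacy class of the homomorphism $N\to G$ recording the $N$-action on the fibre at the origin. On the bundle side this is essentially by construction: for a point of $\Bun_G^\lambda(\Aa^2/N)$, the $N$-action on the fibre over the origin $0\in\Aa^2$ (a fixed point of $N$ acting on $\Aa^2$) gives a homomorphism $N\to G$ whose restriction to $\Gg_m$ lies in the class $\lambda$, hence an element of $\Xi(\lambda)$; I would check this class is locally constant in families, giving the map to $\Xi(\lambda)$ and the decomposition $\Bun_G^\lambda(\Aa^2/N)=\coprod_{\xi}\Bun_G^\xi(\Aa^2/N)$, and then transport it across $\Psi$ to define $(\Gr_0)^{\iota,\xi}$. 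One should note that $\Xi(\lambda)$ is finite: a homomorphism $N\to G$ extending the cocharacter $\lambda$ is determined by the image of $n_0$, which must lie in the normalizer in $G$ of the image of $\lambda$ and square to $\lambda(-1)$, and there are finitely many such up to conjugacy by the centralizer — I would spell this out just enough to justify calling the union ``disconnected''.

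The main obstacle I anticipate is the rigidification step: making precise the passage from ``$\Gg_m$-equivariant bundle fixed by $n_0$'' to ``$N$-equivariant bundle'', i.e.\ showing the fixed-point set of the $n_0$-action on $\Bun_G(\Aa^2/\Gg_m)$ is canonically $\Bun_G(\Aa^2/N)$ and not some gerbe or torsor over it. Concretely, given a $\Gg_m$-equivariant $G$-bundle $\mathcal{P}$ on $\Pp^2$ with trivialization at infinity and an isomorphism $\phi:n_0^*\mathcal{P}\simto\mathcal{P}$ compatible with the $\Gg_m$-structure and the framing, one must arrange $\phi$ so that the cocycle condition $\phi\circ n_0^*\phi=\mathcal{P}$-identity holds exactly (not merely up to an automorphism of $\mathcal{P}$), using that $\Aut$ of a framed bundle is trivial — this triviality of automorphisms of framed objects is the standard mechanism and should apply here verbatim, but it needs to be stated. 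A secondary, more routine obstacle is verifying that the decomposition maps are morphisms (local constancy of the conjugacy-class invariant), which follows from upper semicontinuity / the rigidity of the framing but should be written down. Once these two points are settled, Theorem~\ref{thm:normalizer2} follows by combining the fixed-point bijection with~\eqref{eqn:bf-isom} and restricting to the $\lambda$- and $\xi$-strata.
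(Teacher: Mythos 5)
Your proposal is correct and follows essentially the same route as the paper: the identification of $\Bun_G(\Aa^2/N)$ with the $[\begin{smallmatrix}0&1\\-1&0\end{smallmatrix}]$-fixed locus of $\Bun_G(\Aa^2/\Gg_m)$ rests, exactly as you anticipate, on the triviality of automorphisms of framed bundles (Proposition~\ref{prop:free}), which makes $\Gamma$-equivariance a property rather than extra structure, and the rest is Theorem~\ref{thm:normalizer} plus the decomposition by the conjugacy class of the induced homomorphism $N\to G$ on the fibre at the origin. The only cosmetic difference is in justifying disconnectedness: the paper argues via the closedness of the $Z_G(\lambda)$-conjugacy classes of the semisimple elements $\sigma\in\Sigma(\lambda)$, rather than via any property of the framing.
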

\noindent
Here, and throughout the paper, we use the symbol $\coprod$ for a disconnected union of varieties, in contrast to the symbol $\bigsqcup$ which merely denotes a disjoint union.

The definition of $\Bun_G^\xi(\Aa^2/N)$ is analogous to that of $\Bun_G^\lambda(\Aa^2/\Gg_m)$: namely, it is the subvariety of $\Bun_G(\Aa^2/N)$ defined by requiring that the action of $N$ on the fibre of the $G$-bundle at the origin in $\Aa^2$ gives rise to a homomorphism $N\to G$ in the $G$-conjugacy class $\xi$. See Proposition~\ref{prop:sigma} for a definition of $(\Gr_0)^{\iota,\xi}=\Psi(\Bun_G^\xi(\Aa^2/N))$ that is independent of moduli spaces.

Note that $(\Gr_0)^\iota$ is the disjoint union of the varieties $(\Gr_0)^{\iota,\xi}$ as $\xi$ runs over the set $\Xi=\bigsqcup_{\lambda\in\Lambda_1^+}\Xi(\lambda)$ of $G$-conjugacy classes of homomorphisms $N\to G$. This is the stratification of $(\Gr_0)^\iota$ mentioned in \S\ref{ss:outline}. For general $G$, we do not know for which $\xi\in\Xi$ the varieties $(\Gr_0)^{\iota,\xi}$ are nonempty, or whether they can be disconnected.

\subsection{ADHM descriptions}

When $G$ is a classical group, we have the Atiyah--Drinfel'd--Hitchin--Manin (ADHM) description~\cite{ahdm,barth,donaldson} of $\Bun_G(\Aa^2/\Gamma)$, developed further by Nakajima (see~\cite{nak1,nak2,nak3} for $\SL(r)$ and~\cite[Appendix A]{nak4} for other classical groups). Thus Theorem~\ref{thm:normalizer2} gives rise to such a description of $(\Gr_0)^{\iota,\xi}$.

In particular, consider the case where $G=\SL(r)$, so that principal $G$-bundles are equivalent to rank-$r$ vector bundles with trivial determinant bundle. In this case, for any $\lambda\in\Lambda^+$, the ADHM description of $\Bun_{G}^\lambda(\Aa^2/\Gg_m)$ identifies it with a Nakajima quiver variety $\fM_0^{\mathrm{reg}}(\bv,\bw)$ of type $\A$; see \S\ref{ss:Gm-case}. As explained by Nakajima in~\cite[Section 2(v)]{nak4}, one should think of the underlying graph as being the McKay graph of the subgroup $\Gg_m\subset\SL(2)$, hence of type $\A_{\infty}$ (infinite in both directions), although the dimension vectors $\bv$ and $\bw$ are zero outside a finite set of vertices. Combining this with the result of Braverman and Finkelberg, one recovers the result of Mirkovi\'c and Vybornov~\cite{mvy-cr} that the varieties $\Gr_0^\lambda$ for $G=\SL(r)$ are isomorphic to quiver varieties of type $\A$; see Proposition~\ref{prop:mv}. (This works for more general $\Gr_\mu^\lambda$ as in Remark~\ref{rmk:slice}.)   

Applying similar reasoning to our Theorem~\ref{thm:normalizer2}, we obtain:

\begin{thm} \label{thm:intro-glr}
Suppose $G=\SL(r)$. Then the nonempty varieties $(\Gr_0)^{\iota,\xi}$ are connected, and are isomorphic to certain Nakajima quiver varieties $\fM_0^{\mathrm{reg}}(\bv,\bw)$ of type $\D$, described explicitly in Proposition~\ref{prop:glr}.
\end{thm}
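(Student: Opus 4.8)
The plan is to combine Theorem~\ref{thm:normalizer2} with the ADHM (quiver-variety) description of $\Bun_{\SL(r)}(\Aa^2/\Gamma)$. By Theorem~\ref{thm:normalizer2}, for each $\xi\in\Xi$ there is an isomorphism of varieties $(\Gr_0)^{\iota,\xi}\simto\Bun_{\SL(r)}^\xi(\Aa^2/N)$, so it is enough to describe the latter. For $G=\SL(r)$ and any subgroup $\Gamma\subseteq\SL(2)$, the Kronheimer--Nakajima construction identifies $\Bun_{\SL(r)}(\Aa^2/\Gamma)$ with a disjoint union of Nakajima quiver varieties $\fM_0^{\mathrm{reg}}(\bv,\bw)$ whose underlying graph is the McKay graph of $\Gamma$; here $\bw$ is read off from the $\Gamma$-module structure that the equivariant trivialization attaches to $\C^r$ (hence, given the homomorphism $\Gamma\to G$ recording the $\Gamma$-action near the origin, from $\xi$), while $\bv$ records the remaining discrete invariants of the bundle. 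For $\Gamma=\Gg_m$ this is precisely the description recalled in \S\ref{ss:Gm-case}, where the McKay graph is the type-$\A_\infty$ Dynkin diagram; the point is to run the same argument with $\Gamma=N$.

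The first genuinely new computation is the McKay graph of $N=N_{\SL(2)}(\Gg_m)$. The irreducible representations of $N$ are the $2$-dimensional representations $\rho_n$ induced from the character $z\mapsto z^n$ of $\Gg_m$, for $n\geq 1$, together with the two $1$-dimensional representations $\rho_0^{+},\rho_0^{-}$ pulled back from the trivial and the sign character of $N/\Gg_m\cong\Z/2$. The restriction of the standard representation of $\SL(2)$ to $N$ is $\rho_1$, and one computes $\rho_0^{\pm}\otimes\rho_1\cong\rho_1$, $\rho_1\otimes\rho_1\cong\rho_0^{+}\oplus\rho_0^{-}\oplus\rho_2$, and $\rho_n\otimes\rho_1\cong\rho_{n-1}\oplus\rho_{n+1}$ for $n\geq 2$. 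Hence the McKay graph is the Dynkin diagram $\D_\infty$: a half-line $\rho_1-\rho_2-\rho_3-\cdots$ with the two extra vertices $\rho_0^{+},\rho_0^{-}$ both attached to $\rho_1$. Since $\bv$ and $\bw$ have finite support, the resulting quiver varieties are of type $\D$ in the usual sense (a truncation $\D_k$ for $k$ large, the choice being immaterial by the standard support and reflection-functor arguments). The translation of $\xi$ into the explicit data $(\bv,\bw)$ will be carried out in Proposition~\ref{prop:glr}.

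To finish, one needs the facts about connectedness. It is a known result (see \cite{nak1,nak2,nak3}) that for a finite-type graph a nonempty $\fM_0^{\mathrm{reg}}(\bv,\bw)$ is a smooth connected variety, being a nonempty open subset of the irreducible affine quiver variety $\fM_0(\bv,\bw)$; this yields connectedness of each nonempty $(\Gr_0)^{\iota,\xi}$. Furthermore, the pair $(\bv,\bw)$ attached to a homomorphism $N\to\SL(r)$ depends only on its $\SL(r)$-conjugacy class and is injective on the relevant set, so it identifies $\Xi$ with the set of pairs $(\bv,\bw)$ that occur; consequently the decomposition $(\Gr_0^\lambda)^\iota=\coprod_{\xi\in\Xi(\lambda)}(\Gr_0)^{\iota,\xi}$ of Theorem~\ref{thm:normalizer2} is exactly the decomposition into connected components.

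The main obstacle is the first step: making the ADHM dictionary work for the non-finite group $\Gamma=N$. One must extend the Kronheimer--Nakajima construction, and the bookkeeping that relates the component $\Bun_{\SL(r)}^\xi$ to the quiver data $(\bv,\bw)$, from the case of finite $\Gamma$ to $N$ --- as Nakajima does for $\Gg_m$ in \cite[Section~2(v)]{nak4} --- and check that the quotient $N/\Gg_m\cong\Z/2$ is reflected precisely in the fork of the $\D_\infty$ diagram. Once that is set up, Theorem~\ref{thm:intro-glr} follows by assembling the combinatorics recorded in Proposition~\ref{prop:glr}.
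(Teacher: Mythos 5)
Your outline is essentially the paper's proof: Theorem~\ref{thm:normalizer2} reduces the claim to describing $\Bun_{\SL(r)}^\xi(\Aa^2/N)$, the Barth/ADHM monad description of $\Bun_{\SL(r)}(\Aa^2)$ together with its $\GL(2)$-equivariance is then restricted to $N$-fixed points (this is the Varagnolo--Vasserot mechanism, which extends verbatim from finite $\Gamma$ to the reductive group $N$ --- exactly the "main obstacle" you identify), the McKay graph of $N$ is computed to be $\D_\infty$ precisely as you do, and connectedness comes from the general connectedness of nonempty quiver varieties (the paper cites Crawley--Boevey for $\fM(\bv,\bw)$ rather than irreducibility of $\fM_0(\bv,\bw)$, a cosmetic difference) together with the observation that, $N$ being infinite, the Cartan matrix $C_N$ has no finitely supported kernel vector, so each $\xi$ corresponds to at most one $\bv$.

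One bookkeeping slip to correct: in the paper's normalization (the $\mu=0$ case, trivial $N$-action on the fibre at infinity) the framing vector is always $\bw=r\delta_{0,+}$, independent of $\xi$; it is the dimension vector $\bv$, recording the $N$-module $H^1(\Pp^2,\cE(-\ell_\infty))$, that is determined by $\xi$ through the relation of Proposition~\ref{prop:fundamental}, namely that the multiplicity vector of $\tau$ equals $r\delta_0-C_N\bv$. Your first paragraph has these two roles reversed ($\bw$ read off from $\xi$, $\bv$ the "remaining invariants"); since you defer the explicit translation to Proposition~\ref{prop:glr} this does not derail the argument, but it must be fixed there, as it is this relation that also yields the nonemptiness criterion $m_{0,-}\le v_1$.
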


\noindent
Here the graph underlying the quiver varieties is really the McKay graph of the subgroup $N\subset\SL(2)$, which is of type $\D_{\infty}$; see \S\ref{ss:subgroups}.

In the case where $G=\SL(2)$, $\Lambda_1^+=\Lambda^+=\{m\alpha\,|\,m\in\N\}$ where $\alpha$ denotes the positive coroot of $G$. It is known that $\Gr_0^{m\alpha}$ is isomorphic to the intersection $\cO_{(2m)}\cap\cS_{(m,m)}$ where $\cO_{(2m)}$ denotes the regular nilpotent orbit in $\fsl(2m)$ and $\cS_{(m,m)}$ denotes the Slodowy slice to an orbit of Jordan type $(m,m)$ in $\fsl(2m)$; see \S\ref{ss:proof-gl2}. 

\begin{thm} \label{thm:intro-gl2}
Suppose $G=\SL(2)$. If $m>0$ is even, $(\Gr_0^{m\alpha})^\iota$ is empty; and if $m$ is odd,
\[
(\Gr_0^{m\alpha})^\iota \cong \cO_{(2m)}^{\mathrm{C}_m}\cap\cS_{(m,m)}^{\mathrm{C}_m},
\]
where $\cO_{(2m)}^{\mathrm{C}_m}$ denotes the regular nilpotent orbit in the symplectic Lie algebra $\fsp(2m)$, and $\cS_{(m,m)}^{\mathrm{C}_m}$ denotes the Slodowy slice to an orbit of Jordan type $(m,m)$ in $\fsp(2m)$.
\end{thm}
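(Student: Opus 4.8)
The plan is to deduce the theorem from Theorem~\ref{thm:normalizer2}, which expresses $(\Gr_0^{m\alpha})^\iota$ as the disconnected union $\coprod_{\xi\in\Xi(m\alpha)}(\Gr_0)^{\iota,\xi}$ of moduli spaces of $N$-equivariant $\SL(2)$-bundles. So the argument splits into (a) computing $\Xi(m\alpha)$ by elementary group theory, which already settles the case of even $m$ and shows connectedness for odd $m$, and (b) for odd $m$, identifying the single remaining piece with $\cO_{(2m)}^{\mathrm{C}_m}\cap\cS_{(m,m)}^{\mathrm{C}_m}$ by transporting $\iota$ through the known isomorphism $\Gr_0^{m\alpha}\cong\cO_{(2m)}\cap\cS_{(m,m)}$ with the help of Theorem~\ref{thm:normalizer}(2).

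For (a): the coweight $m\alpha$ is represented by the cocharacter $s\mapsto[\begin{smallmatrix}s^m&0\\0&s^{-m}\end{smallmatrix}]$ of $\SL(2)$, so a homomorphism $\phi\colon N\to\SL(2)$ in a class lying over $m\alpha$ is determined, after conjugating so that $\phi|_{\Gg_m}$ equals this cocharacter, by the single element $n:=\phi([\begin{smallmatrix}0&1\\-1&0\end{smallmatrix}])$, which must satisfy $n[\begin{smallmatrix}s^m&0\\0&s^{-m}\end{smallmatrix}]n^{-1}=[\begin{smallmatrix}s^{-m}&0\\0&s^m\end{smallmatrix}]$ for all $s\in\C^\times$ and $n^2=\phi(-I)=(-1)^mI$. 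For $m\ge1$ the first relation forces $n$ to be anti-diagonal (compare eigenspaces), whence $n^2=-I$; so when $m$ is even no such $\phi$ exists, $\Xi(m\alpha)=\emptyset$, and $(\Gr_0^{m\alpha})^\iota=\emptyset$ by Theorem~\ref{thm:normalizer2}. When $m$ is odd, $n=[\begin{smallmatrix}0&1\\-1&0\end{smallmatrix}]$ satisfies both relations, and any two valid choices of $n$ are conjugate by the centralizer $\{[\begin{smallmatrix}s&0\\0&s^{-1}\end{smallmatrix}]\}$ of $\phi|_{\Gg_m}$, which rescales the off-diagonal entries of $n$ by an arbitrary square; hence $\Xi(m\alpha)$ is a single class $\xi$ and $(\Gr_0^{m\alpha})^\iota=(\Gr_0)^{\iota,\xi}$ is connected, in agreement with Theorem~\ref{thm:intro-glr}.

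For (b), fix $m$ odd. The $\SL(2)$ case of Proposition~\ref{prop:mv} identifies $\Gr_0^{m\alpha}$ with $\cO_{(2m)}\cap\cS_{(m,m)}\subset\fsl(2m)$, the isomorphism sending $g(t^{-1})$ to a $2m\times 2m$ matrix built from its Fourier coefficients; I would write this out explicitly in the transition-function model used to prove Theorem~\ref{thm:normalizer}. By Theorem~\ref{thm:normalizer}(2) the involution $\iota$ is induced by $[\begin{smallmatrix}0&1\\-1&0\end{smallmatrix}]\in\SL(2)$, and one checks that on the matrix side it becomes the involution $\theta\colon X\mapsto -J^{-1}X^{\mathsf T}J$ for a nondegenerate bilinear form $J$ on $\C^{2m}$ with $J^{\mathsf T}=-J$, the alternating sign being exactly the relation $n^2=-I$ found above. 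One may moreover arrange the $\fsl_2$-triple defining $\cS_{(m,m)}$ to lie in $\fsl(2m)^\theta=\fsp(2m)$, since $(m,m)$ is an admissible partition for $\fsp(2m)$ when $m$ is odd. Taking $\theta$-fixed points, and using that $\cO_{(2m)}\cap\fsp(2m)$ is the regular nilpotent orbit $\cO_{(2m)}^{\mathrm{C}_m}$ (Jordan type $(2m)$ is regular in $\fsp(2m)$, with a unique orbit of that type) while $\cS_{(m,m)}\cap\fsp(2m)=\cS_{(m,m)}^{\mathrm{C}_m}$, yields $(\Gr_0^{m\alpha})^\iota\cong\cO_{(2m)}^{\mathrm{C}_m}\cap\cS_{(m,m)}^{\mathrm{C}_m}$. (Alternatively this step can be routed through the type-$\D$ quiver variety of Proposition~\ref{prop:glr} with $r=2$, recognized as the symplectic analogue of the type-$\A$ quiver variety realizing $\Gr_0^{m\alpha}$.)

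The main obstacle is the check in (b) that the transported $\iota$ is of genuinely \emph{symplectic} type --- that $J$ may be taken alternating rather than merely nondegenerate --- together with the compatible positioning of the $\fsl_2$-triple inside $\fsp(2m)$. This is where oddness of $m$ must be used a second time, since $\cO_{(2m)}^{\mathrm{C}_m}\cap\cS_{(m,m)}^{\mathrm{C}_m}$ is a perfectly good nonempty variety for even $m$ as well, so the parity of $m$ cannot enter the answer solely through the vanishing of $\Xi(m\alpha)$. I expect the cleanest route is to carry out this computation entirely within the transition-function model of Theorem~\ref{thm:normalizer}, where the form $J$ and, in particular, its symmetry type can be read off directly from the formula for $\iota$.
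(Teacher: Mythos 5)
Your part (a) is correct and agrees with the paper: the elementary computation of $\Xi(m\alpha)$ (equivalently, of the pairs $(m_{0,+},m_{0,-})$ in \eqref{eqn:ordered-pair}) shows that it is empty for $m$ even and a singleton for $m$ odd, which together with Theorem~\ref{thm:normalizer2} settles the even case and gives connectedness in the odd case.

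The gap is in part (b). The assertion you need --- that under an isomorphism $\Gr_0^{m\alpha}\cong\cO_{(2m)}\cap\cS_{(m,m)}$ the involution $\iota$ becomes $X\mapsto -J^{-1}X^{\mathsf T}J$ with $J$ \emph{alternating} precisely when $m$ is odd --- is the entire content of \cite[Theorem~4.4]{hl}, and it cannot be obtained by the route you sketch. Writing the isomorphism ``explicitly in terms of Fourier coefficients'' is the Mirkovi\'c--Vybornov construction, and the paper points out (remark following \eqref{eqn:tworow}) that the Mirkovi\'c--Vybornov target is a \emph{different} transverse slice to $\cO_{(m,m)}$ which is \emph{not} stable under the negative-transpose involution; no closed formula onto the genuine Slodowy slice $\cS_{(m,m)}$ is known, only Maffei's recursive procedure. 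So the step you label ``one checks,'' including the determination of the symmetry type of $J$ (which, as you correctly observe, is where the parity of $m$ must enter a second time), is left unproved, and the computation you propose would land on a slice where the involution does not even act. The paper's actual argument for odd $m$ is the route you mention only parenthetically: Proposition~\ref{prop:glr} with $r=2$ identifies $(\Gr_0^{m\alpha})^\iota$ with the type-$\D_{m+1}$ quiver variety $\fM_0^{\mathrm{reg}}((\tfrac{m+1}{2},\tfrac{m-1}{2},m-1,\dots,2,1),(2,0,\dots,0))$, and then \cite[Theorem~1.2]{hl} (together with \cite[Theorem~4.4]{hl} for the identification of the involution, via Proposition~\ref{prop:mv-iota}) identifies this fixed-point quiver variety with $\cO_{(2m)}^{\mathrm{C}_m}\cap\cS_{(m,m)}^{\mathrm{C}_m}$. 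These are substantive external inputs, not routine verifications, and your proposal does not supply a substitute for them.
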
 

\subsection{Structure of the paper}

Section~\ref{sect:recollections} contains no new results. We repeat some standard definitions concerning the affine Grassmannian in \S\ref{ss:gr}, recall some of the relations between the affine Grassmannian and the nilpotent cone in \S\ref{ss:achar}, and introduce in \S\ref{ss:sl2} the example of $G=\SL(2)$ which will recur throughout.

In Section~\ref{sect:moduli} we revisit the result of Braverman and Finkelberg~\cite[Theorem 5.2]{bf} concerning $\Bun_G(\Aa^2/\Gg_m)$, reformulating their proof in somewhat more elementary terms in \S\!\S\ref{ss:p1}--\ref{ss:gm-equiv}. This enables a simple proof of Theorem~\ref{thm:normalizer} in \S\ref{ss:proof}. We then deduce Theorem~\ref{thm:normalizer2} in \S\ref{ss:n-equiv}, before setting up some further notation in \S\ref{ss:p2}.

Section~\ref{sec:glr} considers the case $G=\SL(r)$, where the moduli spaces $\Bun_G(\Aa^2/\Gamma)$ have alternative descriptions in terms of quiver varieties. We first recall the fundamental $\Gamma=\{1\}$ case in \S\ref{ss:1-case}, then discuss the McKay correspondence for reductive subgroups $\Gamma$ of $\SL(2)$ in \S\ref{ss:subgroups} and (a special case of) the definition of quiver varieties in \S\ref{ss:quiver-varieties}, before returning to the study of $\Bun_G(\Aa^2/\Gamma)$ in \S\ref{ss:gamma-equiv}. Then we consider the $\Gamma=\Gg_m$ case in detail in \S\ref{ss:Gm-case}, a necessary preliminary for the $\Gamma=N$ case in \S\ref{ss:N-case}, which includes the proof of Theorem~\ref{thm:intro-glr} in Proposition~\ref{prop:glr}. Finally, \S\ref{ss:proof-gl2} gives the proof of Theorem~\ref{thm:intro-gl2}.   

\subsection{Acknowledgements}

The original motivation for this paper was a suggestion of H.~Nakajima, following the author's seminar at Kyoto University in January 2015, that the results of~\cite{ah} should have a moduli-space interpretation via~\cite{bf,kron} (this goal is not reached in the present paper). The precise statement of Theorem~\ref{thm:normalizer} was worked out in discussions with M.~Finkelberg at the Mathematisches Forschungsinstitut Oberwolfach in May 2015, and much of the rest of the paper was written while the author was visiting A.~Licata at the Australian National University in July--September 2015. All these mathematicians and institutions are gratefully acknowledged. I am also indebted to P.~Achar, I.~Grojnowski, J.~Kamnitzer, S.~Kumar, B.~Webster and O.~Yacobi for enlightening conversations about these and related matters.  


\section{Recollections}
\label{sect:recollections}


\subsection{Subvarieties of the affine Grassmannian}
\label{ss:gr}

As in the introduction, let $G$ be a simply connected semisimple algebraic group over $\C$, and let 
\[ \Gr:=G[t,t^{-1}]/G[t] \] 
be its affine Grassmannian, an ind-variety on which $G[t,t^{-1}]$ acts transitively. See~\cite[Proposition 4.6]{laszlosorger} for the reducedness and irreducibility of $\Gr$. 

Fix a maximal torus $T$ and a Borel subgroup $B$ of $G$ with $T\subset B$. Let $\Lambda=\Hom(\Gg_m,T)$ be the coweight lattice of $G$ (written additively, as usual), and let $\Lambda^+\subset\Lambda$ be the set of dominant coweights relative to $B$. The Weyl group $W$ of $(G,T)$ acts on $\Lambda$, and each $W$-orbit has a unique representative in $\Lambda^+$. Since every homomorphism $\Gg_m\to G$ is $G$-conjugate to one whose image lies in $T$, and two coweights $\Gg_m\to T$ are $G$-conjugate if and only if they lie in the same $W$-orbit, we can think of $\Lambda^+$ as parametrizing the $G$-conjugacy classes of homomorphisms $\Gg_m\to G$. Recall that if $\lambda\in\Lambda^+$, then the unique representative in $\Lambda^+$ of the $W$-orbit of $-\lambda$ is $-w_0\lambda$, where $w_0$ denotes the longest element of $W$. In particular, the condition $w_0\lambda=-\lambda$ is equivalent to saying that $\lambda$ and $-\lambda$ are $G$-conjugate.

Let $R\subset\Lambda$ denote the set of coroots of $G$, and $R^+\subset R$ the subset of positive coroots relative to $B$. Since $G$ is assumed to be simply connected, $R$ spans $\Lambda$. We have the usual partial order $\leq$ on $\Lambda$ defined by 
\[ \mu\leq\lambda \Longleftrightarrow \lambda-\mu\in\N R^+, \]
and it is well known that $\lambda\geq 0$ for all $\lambda\in\Lambda^+$.

For any $\lambda\in\Lambda$, let $t^\lambda$ be the element of $T[t,t^{-1}]$ corresponding to $\lambda:\Gg_m\to T$, and denote the corresponding $G[t]$-orbit in the affine Grassmannian by 
\[ \Gr^\lambda:=G[t]\,t^\lambda\, G[t]/G[t]\subset\Gr. \] 
It is well known that each $\Gr^\lambda$ is a smooth irreducible quasiprojective variety, and that $\Gr^{\lambda}=\Gr^\mu$ if and only if $\lambda$ and $\mu$ belong to the same $W$-orbit. Hence
\begin{equation}
\Gr=\bigsqcup_{\lambda\in\Lambda^+}\Gr^\lambda.
\end{equation}
For $\lambda\in\Lambda^+$, let $\Grbar^\lambda$ denote the closure of $\Gr^\lambda$ in $\Gr$, an irreducible projective variety (these are the `Schubert varieties' in $\Gr$). We have
\begin{equation}
\Grbar^\lambda=\bigsqcup_{\substack{\mu\in\Lambda^+\\\mu\leq\lambda}}\Gr^\mu,
\end{equation}
where the union on the right-hand side is finite.

Recall that $G[t^{-1}]_1$ denotes the first congruence subgroup of $G[t^{-1}]$, i.e.\ the kernel of the homomorphism $G[t^{-1}]\to G$ setting $t^{-1}$ to $0$. For $\mu\in\Lambda^+$, define
\[ \Gr_\mu:=G[t^{-1}]_1\, t^{w_0\mu}\, G[t]/G[t]\subset\Gr. \] 
The following intersections are nonempty exactly when $\mu\leq\lambda$:
\[
\Gr_\mu^\lambda:= \Gr_\mu\cap\Gr^\lambda,\qquad \Grbar_\mu^\lambda:=\Gr_\mu\cap\Grbar^\lambda.
\]
In fact, $\Grbar_\mu^\lambda$ is a transverse slice to $\Gr^\mu$ inside $\Grbar^\lambda$; see~\cite[Lemma 2.9]{bf}. Note that we are following the convention and notation of~\cite{kwwy} for these transverse slices: in~\cite{bf}, a variety isomorphic to what we have called $\Gr_\mu^\lambda$ is written $\mathcal{W}_{G,\mu}^\lambda$.

In this paper we focus on the case $\mu=0$. 
We have
\begin{equation}
\Gr_0 = G[t^{-1}]_1 G[t]/G[t] = \bigsqcup_{\lambda\in\Lambda^+}\Gr_0^\lambda,
\end{equation}
where $\Gr_0^\lambda$ is an open subvariety of $\Gr^\lambda$. Note that the $G$-action on $\Gr$ preserves $\Gr_0$ and each $\Gr_0^\lambda$. For general $\lambda\in\Lambda^+$, $\Gr_0^\lambda$ consists of infinitely many $G$-orbits. 

For any $\lambda\in\Lambda^+$ we have
\begin{equation}
\Grbar_0^\lambda=\bigsqcup_{\substack{\mu\in\Lambda^+\\\mu\leq\lambda}}\Gr_0^\mu.
\end{equation} 
Note that $\Grbar_0^\lambda$ is the closure of $\Gr_0^\lambda$ in $\Gr_0$. 

As in the introduction, we identify $\Gr_0$ with $G[t^{-1}]_1$ via the isomorphism 
\begin{equation}
G[t^{-1}]_1\simto\Grom:g(t^{-1})\mapsto g(t^{-1})\,G[t]/G[t].
\end{equation}
This isomorphism is $G$-equivariant, where $G$ acts on $G[t^{-1}]_1$ by conjugation. Since $G[t^{-1}]_1$ is an affine ind-variety, so is $\Gr_0$, and each $\Grbar_0^\lambda$ is an affine variety. 


\subsection{Relations to nilpotent orbits}
\label{ss:achar}

Let $\fg$ be the Lie algebra of $G$, and $\cN\subset\fg$ the nilpotent cone. Recall that the adjoint action of $G$ on $\fg$ preserves $\cN$, and $\cN$ is the union of finitely many $G$-orbits.

In~\cite{ah}, Achar and the author studied the relationship between these nilpotent orbits and (some of) the varieties $\Gr_0^\lambda$. This involves two main morphisms: the involution $\iota:\Grom\to\Grom$ defined in the introduction, and the morphism 
\[ \pi:\Grom=G[t^{-1}]_1\to\fg \] 
obtained by viewing $\fg$ as $\ker(G(\C[t^{-1}]/(t^{-2}))\to G)$. More concretely, if we regard $G$ as a closed subgroup of some $\GL(n)$ so that elements of $G[t^{-1}]_1$ can be written $1+x_1t^{-1}+x_2t^{-2}+\cdots+x_mt^{-m}$ where $x_i\in\Mat(n)$, then $\pi$ is defined by
\begin{equation}
\pi(1+x_1t^{-1}+x_2t^{-2}+\cdots+x_mt^{-m})=x_1\ \in\fg\subseteq\Mat(n).
\end{equation}
Note that by definition of $\iota$, we have $\pi\circ\iota=\pi$. Both $\iota$ and $\pi$ are $G$-equivariant.

One of the main results of~\cite{ah} states that for $\lambda\in\Lambda^+$, $\Gr_0^\lambda$ consists of finitely many $G$-orbits if and only if $\pi(\Gr_0^\lambda)\subset\cN$, and that moreover these equivalent conditions hold precisely when $\lambda$ is a \emph{small} coweight, meaning that there is no $\alpha\in\Lambda^+\cap R$ such that $2\alpha\leq\lambda$.

As mentioned in the introduction, \cite[Lemma 2.2]{ah} showed that 
\begin{equation}
\iota(\Gr_0^\lambda)=\Gr_0^{-\lambda}=\Gr_0^{-w_0\lambda}\ \text{ for all }\lambda\in\Lambda^+.
\end{equation} Write $\Lambda^+_1$ for the set of $\lambda\in\Lambda^+$ satisfying $w_0\lambda=-\lambda$. Thus, when $\lambda\in\Lambda^+_1$, one has an induced involution $\iota$ of $\Gr_0^\lambda$ and can consider the fixed-point subvariety $(\Gr_0^\lambda)^\iota$, which is possibly disconnected. Note that $(\Gr_0^\lambda)^\iota$ is $G$-stable.

As was also mentioned in the introduction, we have a canonical $G$-equivariant embedding
\[ e:\cN\hookrightarrow(\Grom)^\iota:x\mapsto\exp(xt^{-1}), \]
where $\exp:\fg\to G$ is the exponential map (whose restriction to $\cN$ is a variety morphism).
Note that $e$ is a section of $\pi$ in the sense that $\pi(e(x))=x$.

\begin{rmk} 
The embedding $e$ is not to be confused with the \emph{Lusztig embeddings} 
\begin{equation} \label{eqn:embeddings}
\begin{split}
&\cN\hookrightarrow\Grom:x\mapsto 1+xt^{-1}\text{ and}\\ 
&\cN\hookrightarrow\Grom:x\mapsto (1-xt^{-1})^{-1}, 
\end{split}
\end{equation}
which make sense only when $G=\SL(n)$. The embeddings in~\eqref{eqn:embeddings} are sections of $\pi$ which are interchanged by $\iota$; they were introduced in~\cite{lusztig} (see also~\cite[Section 4.1]{ah}).   
\end{rmk}

For the nilpotent cone $\cN$, the stratification mentioned in \S\ref{ss:outline} is just the stratification into $G$-orbits. These nilpotent orbits $\cO\subset\cN$ are well known to be in bijection with the $G$-conjugacy classes of homomorphisms $\SL(2)\to G$: the bijection maps the $G$-conjugacy class of a homomorphism $\varphi:\SL(2)\to G$ to the $G$-orbit of $(d\varphi)([\begin{smallmatrix}0&1\\0&0\end{smallmatrix}])$. Moreover, a homomorphism $\varphi:\SL(2)\to G$ is determined up to $G$-conjugacy by its restriction to the diagonal subgroup $\Gg_m\subset\SL(2)$, and this restriction to $\Gg_m$ is $G$-conjugate to a unique dominant coweight $\lambda:\Gg_m\to T$, which must be an element of $\Lambda_1^+$. Hence every nilpotent orbit $\cO$ gives rise to an element $\lambda_\cO\in\Lambda^+_1$, often displayed visually as the \emph{weighted Dynkin diagram} of $\cO$.

In $\SL(2)[t,t^{-1}]$ we have the equation
\begin{equation} \label{eqn:sl2}
\exp([\begin{smallmatrix}0&t^{-1}\\0&0\end{smallmatrix}])=[\begin{smallmatrix}0&1\\-1&t\end{smallmatrix}]
[\begin{smallmatrix}t&0\\0&t^{-1}\end{smallmatrix}][\begin{smallmatrix}1&0\\t&1\end{smallmatrix}].
\end{equation}
Since $\exp\circ(d\varphi)=\varphi\circ\exp$, applying $\varphi:\SL(2)\to G$ to both sides of~\eqref{eqn:sl2} shows that $e(\cO)\subseteq \Gr_0^{\lambda_\cO}$, and hence
\begin{equation} \label{eqn:exp-inclusion}
e(\cO)\subseteq (\Gr_0^{\lambda_\cO})^\iota,\ \text{ for all nilpotent orbits $\cO$ of $G$.}
\end{equation} 
Usually the inclusion in~\eqref{eqn:exp-inclusion} is strict, but as a consequence of~\cite[Proposition 6.6]{ah}: 
\begin{equation} \label{eqn:exp-equality}
\text{If $\lambda_\cO$ is small, then }e(\cO)=(\Gr_0^{\lambda_\cO})^\iota.
\end{equation} 
Indeed, in this case, $\pi$ induces an isomorphism $(\Gr_0^{\lambda_\cO})^\iota\simto\cO$ whose inverse is the restriction of $e$ to $\cO$.

\begin{rmk}
Another phenomenon discovered in~\cite{ah} is that, when $G$ is almost simple and $\cO\subset\cN$ is a nilpotent orbit such that $\lambda_\cO$ is small and $(\Gr_0^{\lambda_\cO})^\iota$ is a proper subvariety of $\Gr_0^{\lambda_{\cO}}$, the quotient of the complement $\Gr_0^{\lambda_{\cO}}\smallsetminus(\Gr_0^{\lambda_\cO})^\iota$ by the involution $\iota$ is isomorphic via $\pi$ to a different nilpotent orbit $\cO'\subset\cN$, containing $\cO$ in its closure. The proof in~\cite{ah} was case-by-case; we hope that the moduli-space interpretation of $\iota$ given by Theorem~\ref{thm:normalizer} will assist in finding a uniform explanation. 
\end{rmk}

\subsection{An example}
\label{ss:sl2}

Suppose that $G=\SL(2)$. In this case we have $\Lambda_1^+=\Lambda^+=\{m\alpha\,|\,m\in\N\}$ where $\alpha$ is the positive coroot of $G$. Note that $m\alpha$ is small only when $m\in\{0,1\}$. The zero and nonzero nilpotent orbits have associated coweights $0$ and $\alpha$ respectively.

When $m=0$, $(\Gr_0^0)^\iota=\Gr_0^0=\{1\}$. When $m>0$, we have
\[
\begin{split}
\Gr_0^{m\alpha}&=\{1+x_1t^{-1}+x_2t^{-2}+\cdots+x_mt^{-m}\,|\,x_i\in\Mat(2),\, x_m\neq 0,\\
&\qquad\qquad\qquad\qquad\qquad\det(1+x_1t^{-1}+x_2t^{-2}+\cdots+x_mt^{-m})=1\},\\
(\Gr_0^{m\alpha})^\iota&=\{1+x_1t^{-1}+x_2t^{-2}+\cdots+x_mt^{-m}\,|\,x_i\in\Mat(2),\, x_m\neq 0,\\
&\qquad\qquad\qquad\qquad(1+x_1t^{-1}+x_2t^{-2}+\cdots+x_mt^{-m})\\
&\qquad\qquad\qquad\qquad\qquad\times(1-x_1t^{-1}+x_2t^{-2}-\cdots+(-1)^mx_mt^{-m})=1\}.
\end{split}
\]
The varieties $\Grbar_0^{m\alpha}=\bigsqcup_{m'\leq m} \Gr_0^{m'\alpha}$ and $(\Grbar_0^{m\alpha})^\iota=\bigsqcup_{m'\leq m}(\Gr_0^{m'\alpha})^\iota$ are described in the same way, but omitting the condition $x_m\neq 0$. In particular,
\begin{equation}
e(\cN)=\{1+xt^{-1}\,|\,x\in\Mat(2),\,x^2=0\}=(\Grbar_0^{\alpha})^\iota=\Grbar_0^{\alpha},
\end{equation}
exemplifying~\eqref{eqn:exp-equality}.  

\begin{lem} \label{lem:even-empty}
$(\Gr_0^{m\alpha})^\iota$ is empty when $m$ is even and positive.
\end{lem}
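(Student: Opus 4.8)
The plan is to argue by contradiction straight from the explicit description of $(\Gr_0^{m\alpha})^\iota$ displayed above, using nothing beyond $2\times 2$ matrix algebra over $\C[t^{-1}]$. So suppose $m\geq 2$ is even and that $g(t^{-1})=1+x_1t^{-1}+\cdots+x_mt^{-m}$ belongs to $(\Gr_0^{m\alpha})^\iota$; thus $x_i\in\Mat(2)$, $x_m\neq 0$, $\det g(t^{-1})=1$, and the fixed-point equation $g(t^{-1})\,g(-t^{-1})=1$ holds. Write $g(t^{-1})=[\begin{smallmatrix}a&b\\c&d\end{smallmatrix}]$ with $a,b,c,d\in\C[t^{-1}]$, each of degree at most $m$ in $t^{-1}$.

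First I would translate the fixed-point condition into relations on the four entries. Since $\det g(t^{-1})=1$, the inverse is the adjugate, $g(t^{-1})^{-1}=[\begin{smallmatrix}d&-b\\-c&a\end{smallmatrix}]$, and likewise $\det g(-t^{-1})=1$, so the equation $g(t^{-1})=g(-t^{-1})^{-1}$ becomes $a(t^{-1})=d(-t^{-1})$, $d(t^{-1})=a(-t^{-1})$, $b(t^{-1})=-b(-t^{-1})$ and $c(t^{-1})=-c(-t^{-1})$. Hence $b$ and $c$ are odd polynomials in $t^{-1}$, so their $t^{-m}$-coefficients vanish (as $m$ is even), while the $t^{-m}$-coefficients of $a(t^{-1})$ and $d(t^{-1})$ coincide (since $d(t^{-1})=a(-t^{-1})$ and $m$ is even). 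Therefore the leading coefficient $x_m$ is a scalar matrix, say $x_m=\nu I_2$ with $\nu\in\C$.

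Finally I would feed this back into the determinant condition: comparing coefficients of $t^{-2m}$ on the two sides of $\det(1+x_1t^{-1}+\cdots+x_mt^{-m})=1$ yields $\det(x_m)=0$, so $\nu^2=0$, hence $x_m=0$, contradicting $x_m\neq 0$. (Equivalently, the $t^{-2m}$-coefficient of $g(t^{-1})\,g(-t^{-1})$ equals $x_m^2$, so $x_m^2=0$, and a nilpotent scalar matrix is zero.) This contradiction proves the lemma.

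I do not expect any serious obstacle; the single point that needs attention — and the reason the statement is false for odd $m$ — is the parity bookkeeping. Evenness of $m$ is used exactly twice: to make the $t^{-m}$-coefficients of $a$ and $d$ equal rather than opposite, and to kill the $t^{-m}$-coefficients of the odd entries $b,c$; together these force $x_m$ to be scalar, and a singular scalar matrix is $0$. For odd $m$ one only gets that $x_m$ is traceless with vanishing determinant, i.e.\ an arbitrary rank-one nilpotent, which is consistent with $(\Gr_0^{m\alpha})^\iota$ being nonempty in that case.
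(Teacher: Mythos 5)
Your proof is correct and follows essentially the same route as the paper's: the fixed-point equation forces the off-diagonal entries to be odd in $t^{-1}$ (so $x_m$ is diagonal for $m$ even) and forces the leading coefficient to be singular/square-zero, which contradicts $x_m\neq 0$. The only cosmetic difference is that you pin down $x_m$ as scalar via the adjugate relations and use $\det(x_m)=0$, where the paper uses ``diagonal'' together with $x_m^2=0$; both yield the same contradiction.
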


\begin{proof}
In the above description of $(\Gr_0^{m\alpha})^\iota$ for $m>0$, the equation forces the off-diagonal entries of $1+x_1t^{-1}+\cdots+x_mt^{-m}$ to be odd functions of $t^{-1}$; it also forces $x_m^2=0$. So if $m$ is even we have the contradictory requirements that $x_m\neq 0$, $x_m^2=0$, and $x_m$ is diagonal.
\end{proof}

We will describe $(\Gr_0^{m\alpha})^\iota$ for $m$ odd in \S\ref{ss:proof-gl2}, when we prove Theorem~\ref{thm:intro-gl2}.


\section{Moduli-space interpretations}
\label{sect:moduli}


To prove Theorem~\ref{thm:normalizer} (more specifically, part (2) of that statement), we need to revisit the moduli-space interpretation of $\Gr_0^\lambda$ given by Braverman and Finkelberg~\cite{bf}, which builds on their previous work with Gaitsgory~\cite{bfg}.

\subsection{Principal bundles on $\Pp^1$}
\label{ss:p1}

First recall the standard moduli-space interpretation of the affine Grassmannian $\Gr$ from~\cite[Section 5]{mv} (see also~\cite[Section 2D]{kwwy}). Namely, it is the moduli space $\Bun_G(\Pp^1;\Pp^1\smallsetminus 0)$ of principal $G$-bundles on $\Pp^1$ equipped with a chosen trivialization on $\Pp^1\smallsetminus 0$. 

Recall that any principal $G$-bundle $\cF$ on $\Pp^1$ must be trivial when restricted to $\Aa^1=\Pp^1\smallsetminus\infty$ and when restricted to $\Pp^1\smallsetminus 0$. So $\cF$ can be constructed by gluing together trivial principal $G$-bundles on $\Aa^1$ and on $\Pp^1\smallsetminus 0$ using some transition function on $\Aa^1\smallsetminus 0$, i.e.\ an element of $G[t,t^{-1}]$. We obtain an isomorphic bundle if we left multiply by an element of $G[t^{-1}]$ (changing the trivialization on $\Pp^1\smallsetminus 0$) or right multiply by an element of $G[t]$ (changing the trivialization on $\Aa^1$). So the moduli space of principal $G$-bundles on $\Pp^1$ can be expressed as a quotient stack:
\begin{equation} 
\Bun_G(\Pp^1)\cong G[t^{-1}]\backslash G[t,t^{-1}]/G[t].
\end{equation}
Note that the action of $G[t^{-1}]\times G[t]$ on $G[t,t^{-1}]$ is not free, which corresponds to the fact that principal $G$-bundles on $\Pp^1$ have nontrivial automorphism groups. Indeed, even a trivial principal $G$-bundle on $\Pp^1$ has automorphism group $G$; this trivial bundle is parametrized by the open substack $G[t^{-1}]\backslash G[t^{-1}]_1G[t]/G[t]\cong G\backslash\textrm{pt}$, corresponding to the `big opposite Bruhat cell' $G[t^{-1}]_1G[t]=G[t^{-1}]G[t]$ of $G[t,t^{-1}]$.

If we include the trivialization on $\Pp^1\smallsetminus 0$ as part of the data, then only the (free) action of $G[t]$ remains, so we get
\begin{equation} 
\Bun_G(\Pp^1;\Pp^1\smallsetminus 0)\cong G[t,t^{-1}]/G[t]=\Gr,
\end{equation}
now not merely a stack but an ind-variety. In this moduli space, the open subspace parametrizing trivial bundles is $G[t^{-1}]_1G[t]/G[t]=\Grom$.

Note that if instead we included only the trivialization at $\infty$ as part of the data, we would get the moduli stack
\begin{equation} \label{eqn:stack}
\Bun_G(\Pp^1;\infty)\cong G[t^{-1}]_1\backslash G[t,t^{-1}]/G[t],
\end{equation}
in which the open substack parametrizing trivial bundles is a point.

\subsection{Principal bundles on $\Pp^1\times\Pp^1$}

The Braverman--Finkelberg result~\cite[Theorem 5.2]{bf} takes place one dimension up from the above picture, on a rational surface (it is a by-product of their pursuit of the `double affine Grassmannian'). 

The setting is the moduli space $\Bun_G(\Aa^2)$, introduced in~\cite{atiyah,bfg,donaldson}. In one definition,
\begin{equation} \label{eqn:old-def}
\Bun_G(\Aa^2):=\Bun_G(\Pp^2;\Pp^2\smallsetminus\Aa^2)
\end{equation}
is the moduli space of principal $G$-bundles on $\Pp^2$ equipped with a chosen trivialization on the complement of $\Aa^2$. As such, $\Bun_G(\Aa^2)$ clearly carries an action of $G\times \GL(2)$, where $G$ acts by changing the trivialization on $\Pp^2\smallsetminus\Aa^2$, and $\GL(2)$ acts on the base space $\Pp^2$, preserving $\Aa^2$. However, to define the map $\Psi$ referred to in Theorem~\ref{thm:normalizer}, Braverman and Finkelberg use the alternative definition
\begin{equation} \label{eqn:new-def}
\Bun_G(\Aa^2):=\Bun_G(\Pp^1\times\Pp^1;(\Pp^1\times\Pp^1)\smallsetminus\Aa^2).
\end{equation}
In this definition the action of $\GL(2)$ is obscured, but the action of the subgroup $N_{\GL(2)}(\Gg_m)$ appearing in Theorem~\ref{thm:normalizer} remains. So we will use~\eqref{eqn:new-def} until we have proved that theorem, and then explain (in \S\ref{ss:p2}) how to switch to~\eqref{eqn:old-def}.

\begin{rmk} \label{rmk:chern}
Braverman and Finkelberg rarely refer to the whole moduli space $\Bun_G(\Aa^2)$, because $\Bun_G(\Aa^2)$ can easily be decomposed into its connected components by considering the second Chern class of a bundle. In view of~\cite[Theorem 5.2 part (1)]{bf}, this decomposition will be superseded once we impose the $\Gg_m$-equivariance, so we do not need to recall it here.
\end{rmk}

Set $D_\infty=(\Pp^1\times\Pp^1)\smallsetminus\Aa^2$. Then a (closed) point of $\Bun_G(\Aa^2)$ is an isomorphism class of pairs $(\cF,\Phi)$ where $\cF$ is a principal $G$-bundle on $\Pp^1\times\Pp^1$ and $\Phi:\cF|_{D_\infty}\simto G\times D_\infty$ is an isomorphism of principal $G$-bundles on $D_\infty$, i.e.\ a trivialization of $\cF$ on $D_\infty$. In a convenient abuse of notation, we will write $(\cF,\Phi)\in\Bun_G(\Aa^2)$ to mean that $(\cF,\Phi)$ is such a pair.

We now present $\Bun_G(\Aa^2)$ as a quotient, in the same spirit as~\eqref{eqn:stack}. We use the obvious open covering of $\Pp^1\times\Pp^1$ by the following four copies of $\Aa^2$:
\[
\begin{split}
U_{00}&:=\Aa^2=\Aa^1\times\Aa^1,\\
U_{01}&:= \Aa^1\times(\Pp^1\smallsetminus 0),\\
U_{1 0}&:= (\Pp^1\smallsetminus 0)\times\Aa^1,\\  
U_{1 1}&:=(\Pp^1\smallsetminus 0)\times(\Pp^1\smallsetminus 0).
\end{split}
\]
Let $t$ and $u$ be coordinates on the first and second $\Pp^1$ factors, respectively. Then the morphisms $U_{00}\to G$ form the ind-group $G[t,u]$, and similarly the other open sets give rise to $G[t,u^{-1}]$, $G[t^{-1},u]$ and $G[t^{-1},u^{-1}]$ respectively. We will use the notation $(h_{00},h_{01},h_{1 0},h_{11})$ for an element of the product
\begin{equation} \label{eqn:group}
G[t,u]\times G[t,u^{-1}]\times G[t^{-1},u]\times G[t^{-1},u^{-1}].
\end{equation}

A principal $G$-bundle $\cF$ on $\Pp^1\times\Pp^1$ must be trivial on each of the above four open sets, so it can be constructed by gluing together trivial principal $G$-bundles on these open sets using transition functions on the overlaps. The transition functions constitute a quadruple $(g_{01}^{00},g_{10}^{00},g_{11}^{01},g_{11}^{10})$, where
\begin{equation} \label{eqn:containments}
\begin{split}
g_{01}^{00}&\in G[t,u,u^{-1}],\\
g_{10}^{00}&\in G[t,t^{-1},u],\\
g_{11}^{01}&\in G[t,t^{-1},u^{-1}],\\
g_{11}^{10}&\in G[t^{-1},u,u^{-1}],
\end{split}
\end{equation}
and these must satisfy the equation
\begin{equation} \label{eqn:cocycle}
g_{11}^{01}g_{01}^{00}=g_{11}^{10}g_{10}^{00}\qquad \text{in $G[t,t^{-1},u,u^{-1}]$.}
\end{equation}
Here, $g_{01}^{00}$ gives the transition function on $U_{00}\cap U_{01}$, and so forth; the two equal sides of~\eqref{eqn:cocycle} give the transition function on $U_{00}\cap U_{11}$, and the transition function on $U_{01}\cap U_{10}$ is given by the two equal sides of the equivalent equation
\begin{equation} \label{eqn:cocycle2}
(g_{11}^{01})^{-1}g_{11}^{10}=g_{01}^{00}(g_{10}^{00})^{-1}\qquad \text{in $G[t,t^{-1},u,u^{-1}]$.}
\end{equation}

We conclude that the moduli space $\Bun_G(\Pp^1\times\Pp^1)$, as a stack, is the quotient of the space of quadruples $(g_{01}^{00},g_{10}^{00},g_{11}^{01},g_{11}^{10})$ satisfying~\eqref{eqn:containments} and~\eqref{eqn:cocycle} by the group~\eqref{eqn:group}, where the action is given by:
\begin{equation} \label{eqn:action}
\begin{split}
(h_{00},h_{01},h_{1 0},h_{11})&\cdot(g_{01}^{00},g_{10}^{00},g_{11}^{01},g_{11}^{10})\\
&=(h_{01}g_{01}^{00}h_{00}^{-1},h_{10}g_{10}^{00}h_{00}^{-1},h_{11}g_{11}^{01}h_{01}^{-1},h_{11}g_{11}^{10}h_{10}^{-1}).
\end{split}
\end{equation} 

To obtain $\Bun_G(\Aa^2)$, we must modify this description to incorporate the trivialization $\Phi$ of $\cF$ on $D_\infty=(U_{01}\cup U_{10}\cup U_{11})\smallsetminus U_{00}$. First, we must impose two additional conditions on the quadruple $(g_{01}^{00},g_{10}^{00},g_{11}^{01},g_{11}^{10})$, namely that
\begin{equation} \label{eqn:rigidity}
g_{11}^{01}\in G[t,t^{-1},u^{-1}]_1^u\text{ and }g_{11}^{10}\in G[t^{-1},u,u^{-1}]_1^t,
\end{equation}
where $G[t,t^{-1},u^{-1}]_1^u$ means $\ker(G[t,t^{-1},u^{-1}]\to G[t,t^{-1}])$ and $G[t^{-1},u,u^{-1}]_1^t$ is defined analogously. The first condition in~\eqref{eqn:rigidity} ensures that the given trivializations on $U_{01}\smallsetminus U_{00}$ and $U_{11}\smallsetminus U_{00}$ match up along their overlap, and the second condition has a similar interpretation. Let $Z$ denote the space of quadruples $(g_{01}^{00},g_{10}^{00},g_{11}^{01},g_{11}^{10})$ satisfying~\eqref{eqn:containments}, \eqref{eqn:cocycle} and~\eqref{eqn:rigidity}. Any element of $Z$ gives rise to a pair $(\cF,\Phi)$ of the sort we are trying to parametrize.

Second, to avoid changing the isomorphism class of $(\cF,\Phi)$, we must take a quotient not by the full group~\eqref{eqn:group} but by its subgroup
\begin{equation} \label{eqn:subgroup}
H:=G[t,u]\times G[t,u^{-1}]_1^u\times G[t^{-1},u]_1^t\times G[t^{-1},u^{-1}]_1^{t,u},
\end{equation}
where $G[t,u^{-1}]_1^u$ means $\ker(G[t,u^{-1}]\to G[t])$, $G[t^{-1},u]_1^t$ is defined analogously, and $G[t^{-1},u^{-1}]_1^{t,u}$ is the intersection of $G[t^{-1},u^{-1}]_1^{t}$ and $G[t^{-1},u^{-1}]_1^{u}$. To sum up,
\begin{equation} \label{eqn:p1p1presentation}
\Bun_G(\Aa^2)=\Bun_G(\Pp^1\times\Pp^1;D_\infty)\cong Z/H
\end{equation}
as a quotient stack, where the action of $H$ on $Z$ is given by~\eqref{eqn:action} as before.

In this description, the action of $G$ on $\Bun_G(\Aa^2)$ results from an action of $G$ on $Z$, obtained by regarding $G$ as the subgroup of the group~\eqref{eqn:group} responsible for a uniform change of the trivialization $\Phi$: namely, for $g\in G$ and $(g_{01}^{00},g_{10}^{00},g_{11}^{01},g_{11}^{10})\in Z$, we have
\begin{equation} \label{eqn:G-action}
g\cdot(g_{01}^{00},g_{10}^{00},g_{11}^{01},g_{11}^{10})=(gg_{01}^{00},gg_{10}^{00},gg_{11}^{01}g^{-1},gg_{11}^{10}g^{-1}).
\end{equation} 
The action of the diagonal subgroup of $\GL(2)$ on $\Bun_G(\Aa^2)$ results from the following action on $Z$:
\begin{equation} \label{eqn:diag-action}
[\begin{smallmatrix}\alpha&0\\0&\beta\end{smallmatrix}]\cdot(g_{01}^{00},g_{10}^{00},g_{11}^{01},g_{11}^{10})
=(g_{01}^{00},g_{10}^{00},g_{11}^{01},g_{11}^{10})\left|_{\substack{t\mapsto \alpha^{-1}t\\u\mapsto \beta^{-1}u}}\right.\, ,
\end{equation}
and the action of $[\begin{smallmatrix}0&1\\1&0\end{smallmatrix}]$ on $\Bun_G(\Aa^2)$ results from the following action on $Z$:
\begin{equation} \label{eqn:switch-action}
[\begin{smallmatrix}0&1\\1&0\end{smallmatrix}]\cdot(g_{01}^{00},g_{10}^{00},g_{11}^{01},g_{11}^{10})
=(g_{10}^{00},g_{01}^{00},g_{11}^{10},g_{11}^{01})\left|_{\substack{t\mapsto u\\u\mapsto t}}\right.\,.
\end{equation}

The following fact is implicit in the proof of~\cite[Proposition 3.4]{bfg}.

\begin{prop} \label{prop:factorization}
If $(g_{01}^{00},g_{10}^{00},g_{11}^{01},g_{11}^{10})\in Z$, then, regarding $g_{11}^{01}$ and $g_{11}^{10}$ as elements of $G[t,t^{-1}][[u^{-1}]]_1^u$ and $G[[t^{-1}]][u,u^{-1}]_1^t$ respectively, we can uniquely write them in the form
\[ g_{11}^{01}=(g_{11}^{01})'(g_{11}^{01})''\;\text{ and }\;g_{11}^{10}=(g_{11}^{10})'(g_{11}^{10})'' \] 
where
\[
\begin{split}
&(g_{11}^{01})'\in G[t^{-1}][[u^{-1}]]_1^{t,u},\ (g_{11}^{01})''\in G[t][[u^{-1}]]_1^u,\\
&(g_{11}^{10})'\in G[[t^{-1}]][u^{-1}]_1^{t,u},\ (g_{11}^{10})''\in G[[t^{-1}]][u]_1^t.
\end{split}
\] 
\end{prop}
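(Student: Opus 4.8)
The plan is to extract from the statement a self-contained factorization lemma and apply it twice: once to $g_{11}^{01}$ treating $u^{-1}$ as the formal parameter, and once to $g_{11}^{10}$ treating $t^{-1}$ as the formal parameter. Only the containments~\eqref{eqn:containments} and the rigidity conditions~\eqref{eqn:rigidity} are used; the cocycle equation~\eqref{eqn:cocycle} plays no role. The underlying fact is the standard Birkhoff-type decomposition already built into the identification $\Grom\cong G[t^{-1}]_1$: the multiplication map $G[t^{-1}]_1\times G[t]\to G[t,t^{-1}]$ is an isomorphism of ind-schemes onto the open sub-ind-scheme $G[t^{-1}]_1 G[t]$, and it sends $(1,1)$ to $1$.

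Granting this, the first factorization is immediate at the level of formal completions. Since $1=1\cdot 1$ lies in the big cell and the big cell is open, the formal completion of $G[t,t^{-1}]$ at $1$ decomposes as the product of the formal completions of $G[t^{-1}]_1$ and of $G[t]$ at their identity points. Now $g_{11}^{01}\in G[t,t^{-1},u^{-1}]_1^u$, regarded inside $G[t,t^{-1}][[u^{-1}]]_1^u$, is exactly a $\C[[u^{-1}]]$-point of this completion (the superscript $u$ records that it reduces to $1$ at $u^{-1}=0$), so it acquires a unique decomposition $g_{11}^{01}=(g_{11}^{01})'(g_{11}^{01})''$; unwinding the two completed factors, $(g_{11}^{01})'$ is an element of $G[t^{-1}][[u^{-1}]]$ reducing to $1$ both at $t^{-1}=0$ and at $u^{-1}=0$, i.e.\ $(g_{11}^{01})'\in G[t^{-1}][[u^{-1}]]_1^{t,u}$, and likewise $(g_{11}^{01})''\in G[t][[u^{-1}]]_1^u$, as asserted.

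More concretely, one can construct the decomposition by successive approximation in powers of $u^{-1}$, which also makes the uniqueness transparent. Starting from $g_{11}^{01}\equiv 1\pmod{u^{-1}}$, suppose inductively that $a_n\in G[t^{-1}][u^{-1}]_1^{t,u}$ and $b_n\in G[t][u^{-1}]_1^u$ (polynomials in $u^{-1}$) satisfy $a_nb_n\equiv g_{11}^{01}\pmod{(u^{-1})^{n+1}}$. Seeking $a_{n+1}=a_n\exp(\xi(u^{-1})^{n+1})$ and $b_{n+1}=\exp(\eta(u^{-1})^{n+1})b_n$ with $\xi\in t^{-1}\fg[t^{-1}]$ and $\eta\in\fg[t]$, a short computation (using that the Baker--Campbell--Hausdorff correction is of order $2(n+1)$ and that $a_n,b_n\equiv 1\pmod{u^{-1}}$) reduces the requirement $a_{n+1}b_{n+1}\equiv g_{11}^{01}\pmod{(u^{-1})^{n+2}}$ to the single linear equation $\xi+\eta=D_n$, where $D_n\in\fg[t,t^{-1}]$ is the $(u^{-1})^{n+1}$-coefficient of $(a_nb_n)^{-1}g_{11}^{01}-1$. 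Because $\fg[t,t^{-1}]=t^{-1}\fg[t^{-1}]\oplus\fg[t]$ as vector spaces, $\xi$ and $\eta$ exist and are unique, yielding both existence and uniqueness of the formal decomposition; in the limit each coefficient of a power of $u^{-1}$ in $(g_{11}^{01})'$ is a polynomial in $t^{-1}$ and each such coefficient in $(g_{11}^{01})''$ is a polynomial in $t$.

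The factorization of $g_{11}^{10}$ is obtained by the same argument with $t$ and $u$ interchanged: now $t^{-1}$ is the formal parameter, one uses the big-cell decomposition $G[u^{-1}]_1\times G[u]\to G[u,u^{-1}]$, and the direct sum $\fg[u,u^{-1}]=u^{-1}\fg[u^{-1}]\oplus\fg[u]$, giving $(g_{11}^{10})'\in G[[t^{-1}]][u^{-1}]_1^{t,u}$ and $(g_{11}^{10})''\in G[[t^{-1}]][u]_1^t$. The only step needing care is the bookkeeping: matching the various congruence conditions ($G[\cdots]_1^u$, $G[\cdots]_1^t$, $G[\cdots]_1^{t,u}$) with the correct factors of the completed big cell, and ensuring the inductively built factors genuinely lie in $G$ rather than in an ambient $\GL(n)$ — which is precisely why one exponentiates Lie-algebra elements rather than adding naive truncations. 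No genuine obstacle arises once this is organized.
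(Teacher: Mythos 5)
Your proof is correct, but it takes a genuinely different route from the paper's. The paper deduces the factorization from the geometry of the bundle $\cF$ on $\Pp^1\times\Pp^1$: the rigidity condition \eqref{eqn:rigidity} makes $\cF$ trivial on $\Pp^1\times\{\infty\}$, the locus of jumping lines is Zariski-closed, hence $\cF$ is trivial on $\Pp^1\times\cD$ for $\cD$ a formal neighbourhood of $\infty$ (this is the form of the claim in \cite[Proposition 3.4]{bfg}), and comparing a trivialization there with the two given ones produces the factors; uniqueness comes from the triviality of the intersection of the two subgroups. You stay entirely inside the loop group. The abstract form of your argument (a $\C[[u^{-1}]]$-point of $G[t,t^{-1}]$ whose closed point is $1$ must land in the open big cell, which is isomorphic as an ind-scheme to $G[t^{-1}]_1\times G[t]$) is the loop-group translation of the same openness fact underlying the jumping-lines argument, while your successive-approximation recursion, resting on the vector-space decomposition $\fg[t,t^{-1}]=t^{-1}\fg[t^{-1}]\oplus\fg[t]$ and on exponentiating Lie-algebra corrections, is a self-contained replacement for the citation to~\cite{bfg}. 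It also makes transparent two things the paper leaves implicit: that the cocycle equation \eqref{eqn:cocycle} is not used (correct), and exactly why uniqueness holds order by order. What the paper's route buys is the geometric picture (restriction of $\cF$ to lines, jumping lines) that is reused immediately afterwards in Propositions~\ref{prop:psi-prelim} and~\ref{prop:identification}; what yours buys is an elementary, effective argument valid without reference to moduli of bundles.

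Two points of bookkeeping. First, $a_{n+1}=a_n\exp(\xi(u^{-1})^{n+1})$ is a power series, not a polynomial, in $u^{-1}$ unless $\xi$ is nilpotent, so the intermediate approximants should be taken modulo $(u^{-1})^{n+1}$ (or allowed to be power series); this does not affect the limit. Second, read literally, the statement for $g_{11}^{10}$ asks for factors in $G[[t^{-1}]][u^{\mp 1}]$, i.e.\ of \emph{bounded} degree in $u^{\mp 1}$ with coefficients in $\C[[t^{-1}]]$, whereas your recursion in powers of $t^{-1}$ a priori produces $(t^{-1})^n$-coefficients that are polynomials in $u^{\mp1}$ of unbounded degree, i.e.\ elements of $G$ of $\C[u^{\mp1}][[t^{-1}]]$. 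The bounded-degree refinement is exactly what your abstract argument supplies (a $\C[[t^{-1}]]$-point of the ind-scheme $G[u^{-1}]_1$ factors through a finite-type piece because $\Spec\C[[t^{-1}]]$ is quasi-compact), so for the second factorization you should invoke that version rather than the recursion; alternatively, note that the paper's own ``by symmetry'' suggests the mirror-image rings are intended and that nothing downstream (in particular the uniqueness argument in Proposition~\ref{prop:free}) distinguishes the two.
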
 

\begin{proof}
By symmetry we need only prove the claim concerning $g_{11}^{01}$. The uniqueness in the claimed expression $g_{11}^{01}=(g_{11}^{01})'(g_{11}^{01})''$ is clear, because 
\[ G[t^{-1}][[u^{-1}]]_1^{t,u}\cap G[t][[u^{-1}]]_1^u=\{1\}. \]
Recall from the previous subsection that when a principal $G$-bundle on $\Pp^1$ is represented by a transition function $g\in G[t,t^{-1}]$, the bundle is trivial exactly when $g\in G[t^{-1}]_1 G[t]$. So if $(g_{01}^{00},g_{10}^{00},g_{11}^{01},g_{11}^{10})\in Z$ gives rise to $(\cF,\Phi)\in\Bun_G(\Aa^2)$, the claim that $g_{11}^{01}\in G[t^{-1}][[u^{-1}]]_1^{t,u}G[t][[u^{-1}]]_1^u$ is equivalent to saying that $\cF$ is trivial when restricted to $\Pp^1\times \cD$, where $\cD$ is a formal neighbourhood of $\infty$ in $\Pp^1$; this is the form in which the claim appears in the proof of~\cite[Proposition 3.4]{bfg}. The reason is that $\cF$ is assumed to be trivial when restricted to $\Pp^1\times\{\infty\}$, and the set of points $u_0\in\Pp^1$ such that $\cF$ is nontrivial on $\Pp^1\times\{u_0\}$ must be Zariski-closed, hence finite; these finitely many lines $\Pp^1\times\{u_0\}$ are known as the `jumping lines' of $\cF$. 
\end{proof}

As a consequence we have the following result, mentioned in~\cite[Section 3.5]{bfg} and~\cite[Section 4.4]{bf}, which is the reason that $\Bun_G(\Aa^2)\cong Z/H$ is not merely a stack but an ind-variety.

\begin{prop} \label{prop:free}
The action of $H$ on $Z$ is free. In other words, a pair $(\cF,\Phi)\in\Bun_G(\Aa^2)$ has no nontrivial automorphisms.
\end{prop}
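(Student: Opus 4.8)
The plan is to reduce the statement "the action of $H$ on $Z$ is free" to the uniqueness part of Proposition~\ref{prop:factorization}. So suppose $(h_{00},h_{01},h_{10},h_{11})\in H$ fixes a quadruple $(g_{01}^{00},g_{10}^{00},g_{11}^{01},g_{11}^{10})\in Z$; I must show each $h_{ij}=1$. From the action formula~\eqref{eqn:action}, fixing the quadruple means
\[
h_{01}g_{01}^{00}h_{00}^{-1}=g_{01}^{00},\quad h_{10}g_{10}^{00}h_{00}^{-1}=g_{10}^{00},\quad h_{11}g_{11}^{01}h_{01}^{-1}=g_{11}^{01},\quad h_{11}g_{11}^{10}h_{10}^{-1}=g_{11}^{10}.
\]

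First I would attack the last two equations, which say $h_{11}=g_{11}^{01}h_{01}(g_{11}^{01})^{-1}$ and $h_{11}=g_{11}^{10}h_{10}(g_{11}^{10})^{-1}$. The key observation is that $h_{01}\in G[t,u^{-1}]_1^u$ extends to an element of $G[t][[u^{-1}]]_1^u$, and by Proposition~\ref{prop:factorization} we can write $g_{11}^{01}=(g_{11}^{01})'(g_{11}^{01})''$ with $(g_{11}^{01})'\in G[t^{-1}][[u^{-1}]]_1^{t,u}$ and $(g_{11}^{01})''\in G[t][[u^{-1}]]_1^u$. Conjugating $h_{01}$ by $(g_{11}^{01})''$ keeps us inside $G[t][[u^{-1}]]_1^u$; then further conjugating by $(g_{11}^{01})'$ lands $h_{11}$ inside the group $G[t,t^{-1}][[u^{-1}]]_1^u$. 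Since $h_{11}\in G[t^{-1},u^{-1}]_1^{t,u}\subset G[[t^{-1}]][[u^{-1}]]_1^{t,u}$ as well, I would compare the two factorizations: writing $h_{11}=\big((g_{11}^{01})'\,\tilde{h}_{01}\,((g_{11}^{01})')^{-1}\big)\cdot\big(\dots\big)$ does not quite split along the decomposition, so the cleaner route is: the element $(g_{11}^{01})^{-1}h_{11}g_{11}^{01}=h_{01}$ lies in $G[t][[u^{-1}]]_1^u$, and I rewrite this as $h_{11}=g_{11}^{01}h_{01}(g_{11}^{01})^{-1}$, substitute the factorization of $g_{11}^{01}$, and conclude that $((g_{11}^{01})')^{-1}h_{11}(g_{11}^{01})' = (g_{11}^{01})''h_{01}((g_{11}^{01})'')^{-1}$ where the left side lies in $G[[t^{-1}]][[u^{-1}]]_1^{t,u}$-conjugates and the right side lies in $G[t][[u^{-1}]]_1^u$; by the uniqueness $G[t^{-1}][[u^{-1}]]_1^{t,u}\cap G[t][[u^{-1}]]_1^u=\{1\}$-type argument (extended to allow the $h_{11}$ on the left), both sides are forced to be trivial, giving $h_{01}=1$ and $h_{11}=1$. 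By the symmetric argument applied to $g_{11}^{10}$ and the pair $(h_{10},h_{11})$, we get $h_{10}=1$ as well.

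Once $h_{01}=h_{10}=h_{11}=1$, the first two fixed-point equations collapse to $g_{01}^{00}h_{00}^{-1}=g_{01}^{00}$, hence $h_{00}=1$. That finishes the freeness. The translation of "free action" into "no nontrivial automorphisms of $(\cF,\Phi)$" is immediate from the presentation~\eqref{eqn:p1p1presentation}: an automorphism of the pair $(\cF,\Phi)$ is precisely an element of $H$ fixing the corresponding point of $Z$ (the $H$-action is by change of local trivializations, and $H$ was defined to be exactly the subgroup preserving the trivialization $\Phi$ on $D_\infty$).

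The main obstacle I anticipate is getting the bookkeeping of the various completed/uncompleted polynomial rings exactly right — in particular, making sure that conjugating an element of a "partially completed" group like $G[t][[u^{-1}]]_1^u$ by a factor coming from Proposition~\ref{prop:factorization} stays within the ambient group $G[t,t^{-1}][[u^{-1}]]_1^u$, and then extracting a clean uniqueness statement analogous to (but slightly stronger than) the intersection fact $G[t^{-1}][[u^{-1}]]_1^{t,u}\cap G[t][[u^{-1}]]_1^u=\{1\}$ used in the proof of Proposition~\ref{prop:factorization}. Everything else is formal manipulation of the cocycle and action formulas.
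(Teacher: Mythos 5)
Your proposal is correct and follows essentially the same route as the paper: both reduce freeness to the uniqueness of the factorization $g_{11}^{01}=(g_{11}^{01})'(g_{11}^{01})''$ in Proposition~\ref{prop:factorization} (equivalently, to the triviality of the intersection $G[t^{-1}][[u^{-1}]]_1^{t,u}\cap G[t][[u^{-1}]]_1^u$), the only cosmetic difference being that the paper observes $h_{11}(g_{11}^{01})'$ and $(g_{11}^{01})''h_{01}^{-1}$ give a second factorization of the same form rather than conjugating, and deduces $h_{10}=1$ directly from the fourth component once $h_{11}=1$ instead of repeating the symmetric argument.
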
 

\begin{proof}
Suppose that we have
\begin{equation} \label{eqn:fix-eqn}
(h_{00},h_{01},h_{10},h_{11})\cdot(g_{01}^{00},g_{10}^{00},g_{11}^{01},g_{11}^{10})
=(g_{01}^{00},g_{10}^{00},g_{11}^{01},g_{11}^{10})
\end{equation}
for some $(h_{00},h_{01},h_{1 0},h_{11})\in H$ and $(g_{01}^{00},g_{10}^{00},g_{11}^{01},g_{11}^{10})\in Z$.
Taking the third component of both sides of~\eqref{eqn:fix-eqn} gives 
\begin{equation} \label{eqn:fix-eqn3}
h_{11}g_{11}^{01}h_{01}^{-1}=g_{11}^{01}.
\end{equation}
Writing $g_{11}^{01}=(g_{11}^{01})'(g_{11}^{01})''$ as in Proposition~\ref{prop:factorization} and using the uniqueness in that expression, we obtain $h_{11}(g_{11}^{01})'=(g_{11}^{01})'$ and $(g_{11}^{01})''h_{01}^{-1}=(g_{11}^{01})''$, implying $h_{11}=1$ and $h_{01}=1$. Then the other components of~\eqref{eqn:fix-eqn} immediately imply $h_{00}=1$ and $h_{10}=1$ also.
\end{proof} 

As observed in~\cite[Section 4.4]{bf}, it follows from Proposition~\ref{prop:free} that for any group $\Gamma$ acting on $\Pp^1\times\Pp^1$ in such a way as to preserve $\Aa^2$, the moduli space $\Bun_G(\Aa^2/\Gamma)$ of $\Gamma$-equivariant pairs $(\cF,\Phi)$ as above can be identified with the fixed-point set $\Bun_G(\Aa^2)^\Gamma$ of the $\Gamma$-action on $\Bun_G(\Aa^2)$. That is, for a pair $(\cF,\Phi)\in\Bun_G(\Aa^2)$, $\Gamma$-equivariance can be treated as a property rather than as extra structure.

\subsection{$\Gg_m$-equivariant bundles}
\label{ss:gm-equiv}

Following~\cite[Section 5]{bf}, we now describe the moduli space $\Bun_G(\Aa^2/\Gg_m)=\Bun_G(\Aa^2)^{\Gg_m}$ where $\Gg_m$ is the diagonal subgroup of $\SL(2)$. Note that since $\Aa^2/\Gg_m$ is one-dimensional, we should not be surprised to find subvarieties of the ordinary one-variable affine Grassmannian reappearing in the description.

Parts of the next two results are implicit in the proof of~\cite[Theorem 5.2]{bf}.

\begin{prop} \label{prop:psi-prelim}
Suppose that $(g_{01}^{00},g_{10}^{00},g_{11}^{01},g_{11}^{10})\in Z$ is such that its $H$-orbit is fixed by the $\Gg_m$-action. Then:
\begin{enumerate} 
\item We can uniquely write $g_{11}^{01}$ and $g_{11}^{10}$ in the form
\[
g_{11}^{01}=(g_{11}^{01})'(g_{11}^{01})''\;\text{ and }\;g_{11}^{10}=(g_{11}^{10})'(g_{11}^{10})'' \] 
where
\[
\begin{split}
&(g_{11}^{01})'\in G[t^{-1},u^{-1}]_1^{t,u},\ (g_{11}^{01})''\in G[t,u^{-1}]_1^u,\\
&(g_{11}^{10})'\in G[t^{-1},u^{-1}]_1^{t,u},\ (g_{11}^{10})''\in G[t^{-1},u]_1^t.
\end{split}
\]
\item With the above notation, we have
\[
((g_{11}^{01})')^{-1}(g_{11}^{10})'=\gamma|_{t\mapsto tu}
\] 
for some, clearly unique, $\gamma\in G[t^{-1}]_1=\Grom$.
\end{enumerate}
\end{prop}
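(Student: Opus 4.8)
The plan is to combine the general factorization of Proposition~\ref{prop:factorization} with the constraint coming from $\Gg_m$-equivariance. By Proposition~\ref{prop:factorization}, regarding $g_{11}^{01}$ and $g_{11}^{10}$ as power series in $u^{-1}$, resp. $t^{-1}$, we already have unique factorizations with $(g_{11}^{01})'\in G[t^{-1}][[u^{-1}]]_1^{t,u}$, $(g_{11}^{01})''\in G[t][[u^{-1}]]_1^{u}$, and similarly for $g_{11}^{10}$. So part~(1) reduces to showing that, under the $\Gg_m$-fixedness hypothesis, these factors are in fact \emph{polynomial} in the remaining variable (i.e. $(g_{11}^{01})'\in G[t^{-1},u^{-1}]$ rather than merely $G[t^{-1}][[u^{-1}]]$, and $(g_{11}^{01})''\in G[t,u^{-1}]$, with analogous statements for the $10$-factors). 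First I would spell out what $\Gg_m$-fixedness means concretely: by~\eqref{eqn:diag-action}, the diagonal element $[\begin{smallmatrix}\alpha&0\\0&\alpha^{-1}\end{smallmatrix}]$ sends the quadruple to its substitution $t\mapsto\alpha^{-1}t$, $u\mapsto\alpha u$, and fixedness of the $H$-orbit means there is an element $(h_{00},h_{01},h_{10},h_{11})\in H$ (depending algebraically on $\alpha$) intertwining the original quadruple with its substitution. Taking the third component gives $h_{11}\,g_{11}^{01}(t,\alpha^{-1}t,\alpha u)\,\ldots$ — more precisely $h_{11}\cdot(g_{11}^{01}|_{t\mapsto\alpha^{-1}t,u\mapsto\alpha u})\cdot h_{01}^{-1}=g_{11}^{01}$ — and by the uniqueness in Proposition~\ref{prop:factorization} this splits as $h_{11}\cdot((g_{11}^{01})'|_{\ldots})=(g_{11}^{01})'$ and $((g_{11}^{01})''|_{\ldots})\cdot h_{01}^{-1}=(g_{11}^{01})''$.

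The key step is then to extract polynomiality from such an equivariance relation. Write $(g_{11}^{01})'=\sum_{k\geq 0}A_k(t^{-1})\,u^{-k}$ with $A_k\in G[t^{-1}]$-valued (more precisely $\fg[t^{-1}]$- or matrix-valued) coefficients, $A_0=1$. The relation $h_{11}((g_{11}^{01})'|_{t\mapsto\alpha^{-1}t,u\mapsto\alpha u})=(g_{11}^{01})'$, compared degree-by-degree in $u^{-1}$, forces each $A_k$ to transform by a definite power of $\alpha$ under $t^{-1}\mapsto\alpha t^{-1}$ (up to the action of $h_{11}$, which is itself $t$-independent); combined with the fact that $h_{11}\in G[t^{-1},u^{-1}]_1^{t,u}$ is a genuine polynomial of bounded degree (it lies in the ind-group~\eqref{eqn:group}), this bounds the degree in $t^{-1}$ of each $A_k$, and in fact bounds which $k$ can occur, giving $(g_{11}^{01})'\in G[t^{-1},u^{-1}]_1^{t,u}$. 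The same argument applied to $(g_{11}^{01})''$ (using $h_{01}\in G[t,u^{-1}]_1^u$) and, by the symmetry $t\leftrightarrow u$ already exploited in Proposition~\ref{prop:factorization}, to the two factors of $g_{11}^{10}$, finishes part~(1). Here I should double-check that the $h$'s can be chosen to depend algebraically (indeed polynomially) on $\alpha\in\Gg_m$, which follows from freeness of the $H$-action (Proposition~\ref{prop:free}): the intertwiner is unique, hence defines a morphism $\Gg_m\to H$, i.e. a cocycle, and in particular a \emph{homomorphism} after a standard rigidification.

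For part~(2), I would first observe that $((g_{11}^{01})')^{-1}(g_{11}^{10})'$ lies in $G[t^{-1},u^{-1}]_1^{t,u}$, since both factors do and the subgroup $G[t^{-1},u^{-1}]_1^{t,u}$ is a group. It remains to show this element is a function of the single variable $tu$, i.e. of the form $\gamma|_{t\mapsto tu}$ for a unique $\gamma\in G[t^{-1}]_1$. Uniqueness is immediate from the substitution $u=1$. For existence, I would again use $\Gg_m$-equivariance: the diagonal torus acts on $tu$ \emph{trivially} (since $t\mapsto\alpha^{-1}t$, $u\mapsto\alpha u$), so it suffices to show that $((g_{11}^{01})')^{-1}(g_{11}^{10})'$ is $\Gg_m$-invariant and that every $\Gg_m$-invariant element of $G[t^{-1},u^{-1}]_1^{t,u}$ is a function of $tu$. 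The latter is an elementary weight computation: a monomial $t^{-a}u^{-b}$ ($a,b\geq 1$) is $\Gg_m$-fixed iff $a=b$, so the invariant subalgebra is exactly $\C[(tu)^{-1}]$. For the former, I would compute the $\Gg_m$-transform of $((g_{11}^{01})')^{-1}(g_{11}^{10})'$ using the relations from part~(1): the $\alpha$-transforms of $(g_{11}^{01})'$ and $(g_{11}^{10})'$ are $h_{11}^{-1}(g_{11}^{01})'$ and $h_{11}^{-1}(g_{11}^{10})'$ with the \emph{same} $h_{11}$ (since $h_{11}$ is the third-component intertwiner, shared because the $10$-relation uses the same $h_{11}$ in its fourth component), so $h_{11}$ cancels in the product, proving invariance.

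The main obstacle I anticipate is the bookkeeping in part~(1): carefully justifying that the intertwining family $(h_{00},h_{01},h_{10},h_{11})$ is an honest algebraic (better, homomorphic) family over $\Gg_m$, and then converting the resulting weight constraints plus the a priori degree bound on the polynomial $h$'s into the asserted polynomiality of the factors. Everything after that — part~(2) — is a short invariant-theory argument once part~(1) and the cancellation of $h_{11}$ are in hand.
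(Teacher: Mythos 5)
Your part (2) is essentially the paper's argument: the third and fourth components of the fixed-point equation produce the \emph{same} $h_{11}^z$ in the transformation laws of $(g_{11}^{01})'$ and $(g_{11}^{10})'$, so it cancels in $((g_{11}^{01})')^{-1}(g_{11}^{10})'$, and the weight count ($t^{-a}u^{-b}$ is invariant under $t\mapsto z^{-1}t$, $u\mapsto zu$ iff $a=b$) identifies the invariants of $G[t^{-1},u^{-1}]_1^{t,u}$ with functions of $(tu)^{-1}$. No issue there.

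Part (1) has a genuine gap. You propose to upgrade the formal factorization of Proposition~\ref{prop:factorization} to a polynomial one using only the equivariance relation $(g_{11}^{01})'|_{t\mapsto z^{-1}t,\,u\mapsto zu}=h_{11}^z\,(g_{11}^{01})'$. That relation alone cannot force polynomiality in $u^{-1}$: for $G=\SL(2)$, the element $[\begin{smallmatrix}1&f((tu)^{-1})\\0&1\end{smallmatrix}]$ with $f$ an arbitrary formal power series, $f(0)=0$, lies in $G[t^{-1}][[u^{-1}]]_1^{t,u}$, is literally $\Gg_m$-invariant (so satisfies your relation with $h_{11}^z=1$), and is not polynomial unless $f$ is. So the claim that the weight constraints ``bound which $k$ can occur'' is unjustified; any correct argument must use, at this point, that $(g_{11}^{01})'(g_{11}^{01})''=g_{11}^{01}$ comes from an actual point of $Z$, and your sketch never does. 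Two further problems: $h_{11}$ is \emph{not} $t$-independent (it lies in $G[t^{-1},u^{-1}]_1^{t,u}$), and the ``standard rigidification'' making $z\mapsto h^z$ a homomorphism is itself the hard step — it amounts to trivializing an algebraic $1$-cocycle valued in the pro-unipotent factors of $H$, and it cannot be carried out in the $h_{00}$-component, where a coweight survives, cf.~\eqref{eqn:z-fixed-special}. The paper avoids all of this geometrically: polynomiality of the two factors is equivalent to triviality of $\cF$ on $\Pp^1\times(\Pp^1\smallsetminus 0)$, and $\Gg_m$-equivariance forces the finite set of jumping lines $\Pp^1\times\{u_0\}$ to be $\Gg_m$-stable, hence contained in $\Pp^1\times\{0\}$ (the line at $\infty$ being excluded by the trivialization on $D_\infty$). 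If you want a purely algebraic proof, you would have to rerun the triviality argument of Proposition~\ref{prop:factorization} with the equivariance as input, not just manipulate the factors after the fact.
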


\begin{proof}
The proof of (1) is similar to that of Proposition~\ref{prop:factorization}. By symmetry, we need only prove the claim concerning $g_{11}^{01}$, and the uniqueness of the claimed expression is obvious. If $(g_{01}^{00},g_{10}^{00},g_{11}^{01},g_{11}^{10})$ gives rise to $(\cF,\Phi)\in\Bun_G(\Aa^2)$, the claim that $g_{11}^{01}\in G[t^{-1},u^{-1}]_1^{t,u}G[t,u^{-1}]_1^u$ is equivalent to saying that $\cF$ is trivial when restricted to $\Pp^1\times(\Pp^1\smallsetminus 0)$. The reason, as mentioned in the proof of~\cite[Theorem 5.2]{bf}, is that since $\cF$ is $\Gg_m$-equivariant, its set of jumping lines $\Pp^1\times\{u_0\}$ must be $\Gg_m$-stable, which means that the only possible jumping line is $\Pp^1\times\{0\}$.

Now we prove (2). The assumption that the $H$-orbit of $(g_{01}^{00},g_{10}^{00},g_{11}^{01},g_{11}^{10})\in Z$ is fixed by the $\Gg_m$-action is equivalent to saying that for any $z\in\Gg_m$, there is some $(h_{00}^z,h_{01}^z,h_{10}^z,h_{11}^z)\in H$ such that
\begin{equation} \label{eqn:z-fixed}
\begin{split}
&(g_{01}^{00},g_{10}^{00},g_{11}^{01},g_{11}^{10})\left|_{\substack{t\mapsto z^{-1}t\\u\mapsto zu\phantom{{}^{-1}}}}\right.\\
&=(h_{01}^z g_{01}^{00}(h_{00}^z)^{-1},h_{10}^z g_{10}^{00}(h_{00}^z)^{-1},h_{11}^z g_{11}^{01}(h_{01}^z)^{-1},h_{11}^z g_{11}^{10}(h_{10}^z)^{-1}).
\end{split}
\end{equation}
Note that by Proposition~\ref{prop:free}, $(h_{00}^z,h_{01}^z,h_{10}^z,h_{11}^z)$ is uniquely determined by $z$.

Taking the third component of both sides of~\eqref{eqn:z-fixed} gives
\begin{equation}
g_{11}^{01}\left|_{\substack{t\mapsto z^{-1}t\\u\mapsto zu\phantom{{}^{-1}}}}\right.=h_{11}^z g_{11}^{01}(h_{01}^z)^{-1}.
\end{equation}
Writing $g_{11}^{01}=(g_{11}^{01})'(g_{11}^{01})''$ as in (1) and using the uniqueness in that expression, we obtain
\begin{equation} \label{eqn:split1}
(g_{11}^{01})'\left|_{\substack{t\mapsto z^{-1}t\\u\mapsto zu\phantom{{}^{-1}}}}\right.=h_{11}^z (g_{11}^{01})'.
\end{equation}
By symmetry, we also have
\begin{equation} \label{eqn:split2}
(g_{11}^{10})'\left|_{\substack{t\mapsto z^{-1}t\\u\mapsto zu\phantom{{}^{-1}}}}\right.=h_{11}^z (g_{11}^{10})'.
\end{equation}
Combining~\eqref{eqn:split1} and~\eqref{eqn:split2}, we see that $((g_{11}^{01})')^{-1}(g_{11}^{10})'\in G[t^{-1},u^{-1}]_1^{t,u}$ is preserved by the substitution $t\mapsto z^{-1}t$, $u\mapsto zu$ for all $z\in\Gg_m$, proving part (2).
\end{proof}

\begin{prop} \label{prop:psi}
There is a $G$-equivariant bijection
\[ \Psi:\Bun_G(\Aa^2/\Gg_m)\to\Grom \]
which sends the $H$-orbit of $(g_{01}^{00},g_{10}^{00},g_{11}^{01},g_{11}^{10})\in Z$ to $\gamma\in\Grom$, defined as in Proposition~\ref{prop:psi-prelim}\textup{(}2\textup{)}. The inverse is described as follows. If $\gamma\in\Gr_0^\lambda$ for $\lambda\in\Lambda^+$, i.e.\
\[
\gamma=q_1 t^\lambda q_2\;\text{ for some }\,q_1,q_2\in G[t],
\]
then $\Psi^{-1}(\gamma)$ is the $H$-orbit of the quadruple $(\tilde{g}_{01}^{00},\tilde{g}_{10}^{00},\tilde{g}_{11}^{01},\tilde{g}_{11}^{10})\in Z$ defined by
\[
\begin{split}
\tilde{g}_{01}^{00}&=(q_1|_{t\mapsto tu})\,u^\lambda,\\
\tilde{g}_{10}^{00}&=(q_2^{-1}|_{t\mapsto tu})\,t^{-\lambda},\\
\tilde{g}_{11}^{01}&=1,\\
\tilde{g}_{11}^{10}&=\gamma|_{t\mapsto tu}.
\end{split}
\]
Here $u^\lambda\in T[u,u^{-1}]$ means the same as $t^{\lambda}$, but written with the variable $u$.
\end{prop}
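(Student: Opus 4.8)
The plan is to produce the map $\Psi$ together with the candidate inverse described in the statement, verify $G$-equivariance, and check that the two maps are mutually inverse; the real work is in showing that every $\Gg_m$-fixed $H$-orbit has a representative of the displayed standard form.

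\emph{Well-definedness and equivariance of $\Psi$.} If two quadruples lie in the same $H$-orbit (over a $\Gg_m$-fixed point), then by Proposition~\ref{prop:psi-prelim}(1) and the uniqueness of the factorizations there — exactly as in the proof of Proposition~\ref{prop:free} — both $(g_{11}^{01})'$ and $(g_{11}^{10})'$ get multiplied on the left by the same element $h_{11}$; here one uses that $h_{11}\in G[t^{-1},u^{-1}]_1^{t,u}$ and $h_{01},h_{10}$ lie in the relevant congruence subgroups, so each of the two factors stays in its prescribed group. Hence $((g_{11}^{01})')^{-1}(g_{11}^{10})'$, and so $\gamma$, is unchanged, and $\Psi$ descends to $H$-orbits. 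Running the same argument with the $G$-action \eqref{eqn:G-action}, which conjugates $g_{11}^{01}$ and $g_{11}^{10}$ by $g\in G$ and therefore conjugates $(g_{11}^{01})'$ and $(g_{11}^{10})'$ by $g$, shows $\Psi(g\cdot-)=g\,\Psi(-)\,g^{-1}$.

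\emph{The candidate inverse.} Given $\gamma\in\Gr_0^\lambda$ and a factorization $\gamma=q_1t^\lambda q_2$ with $q_1,q_2\in G[t]$, one checks directly that $(\tilde g_{01}^{00},\tilde g_{10}^{00},\tilde g_{11}^{01},\tilde g_{11}^{10})$ satisfies the containments \eqref{eqn:containments} and the rigidity conditions \eqref{eqn:rigidity}, and that \eqref{eqn:cocycle} reduces, via $(tu)^\lambda=t^\lambda u^\lambda$, to the identity $\gamma=q_1t^\lambda q_2$; thus the quadruple lies in $Z$. Its $H$-orbit is $\Gg_m$-fixed, since $t\mapsto z^{-1}t$, $u\mapsto zu$ fixes $tu$ and is undone by the element of $H$ with $h_{00}^z=z^{-\lambda}$ and all other components $1$. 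That the $H$-orbit is independent of the choice of $q_1,q_2$ is the one point of this step needing care: if also $\gamma=\bar q_1t^\lambda\bar q_2$, then $p:=\bar q_1^{-1}q_1\in G[t]$ satisfies $t^{-\lambda}pt^\lambda\in G[t]$, and a $T$-weight-by-weight check shows $h_{00}:=u^{-\lambda}(p|_{t\mapsto tu})u^\lambda$ then lies in $G[t,u]$ and carries one quadruple to the other, the relevant identity being $(tu)^{-\lambda}(p^{-1}|_{t\mapsto tu})(tu)^\lambda=(t^{-\lambda}p^{-1}t^\lambda)|_{t\mapsto tu}$. Finally, because $\tilde g_{11}^{01}=1$ and $\tilde g_{11}^{10}=\gamma|_{t\mapsto tu}$ already lies in $G[t^{-1},u^{-1}]_1^{t,u}$, the factorizations of Proposition~\ref{prop:psi-prelim}(1) for this quadruple are trivial, so $\Psi$ returns $\gamma$; hence we obtain a well-defined map $\Grom\to\Bun_G(\Aa^2/\Gg_m)$ with $\Psi\circ(\text{it})=\id$.

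\emph{The remaining, and hardest, step: every $\Gg_m$-fixed $H$-orbit has a standard representative}, so that $(\text{it})\circ\Psi=\id$ as well. Take $(g_{01}^{00},g_{10}^{00},g_{11}^{01},g_{11}^{10})\in Z$ over such an orbit and factor $g_{11}^{01}=(g_{11}^{01})'(g_{11}^{01})''$ as in Proposition~\ref{prop:psi-prelim}(1). Acting by the element of $H$ with $h_{11}=((g_{11}^{01})')^{-1}$, $h_{01}=(g_{11}^{01})''$, $h_{00}=h_{10}=1$ puts $g_{11}^{01}=1$; then $(g_{11}^{01})'=1$, so the proof of Proposition~\ref{prop:psi-prelim}(2) forces $h_{11}^z=1$ for all $z$ and $(g_{11}^{10})'=\gamma|_{t\mapsto tu}$, and acting by the element of $H$ with $h_{10}=(g_{11}^{10})''$ (others $1$) puts $g_{11}^{10}=\gamma|_{t\mapsto tu}$ without disturbing $g_{11}^{01}$. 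At this point the stabilizer of these normalizations inside $H$ is exactly $G[t,u]$ (acting on the right on $g_{01}^{00}$ and $g_{10}^{00}$), and \eqref{eqn:cocycle} reads $g_{01}^{00}=(\gamma|_{t\mapsto tu})\,g_{10}^{00}$. Writing $\gamma=q_1t^\lambda q_2$, the element $h_{00}:=t^\lambda(q_2|_{t\mapsto tu})\,g_{10}^{00}$ also equals $u^{-\lambda}(q_1^{-1}|_{t\mapsto tu})\,g_{01}^{00}$; the first expression (using $g_{10}^{00}\in G[t,t^{-1},u]$) shows it has no negative powers of $u$, and the second (using $g_{01}^{00}\in G[t,u,u^{-1}]$) shows it has no negative powers of $t$, so $h_{00}\in G[t,u]$. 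Acting by $h_{00}$ carries $g_{10}^{00}$ to $(q_2^{-1}|_{t\mapsto tu})t^{-\lambda}=\tilde g_{10}^{00}$ and $g_{01}^{00}$ to $\tilde g_{01}^{00}$, producing the standard quadruple. Hence $\Psi$ and $(\text{it})$ are mutually inverse bijections, completing the proof. The main obstacle is precisely this normalization: it relies on Proposition~\ref{prop:psi-prelim} to pin down $(g_{11}^{10})'=\gamma|_{t\mapsto tu}$, and on the two "no negative powers" observations — which encode the containments \eqref{eqn:containments} together with the explicit form $\gamma=q_1t^\lambda q_2$ — to see that the final adjustment lands in $G[t,u]$ rather than in a larger subgroup.
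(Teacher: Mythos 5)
Your proposal is correct and takes essentially the same route as the paper: well-definedness and $G$-equivariance via the uniqueness of the factorizations in Proposition~\ref{prop:psi-prelim}(1), surjectivity by checking the standard quadruple is $\Gg_m$-fixed (your $h_{00}^z=\lambda(z)^{-1}$ is exactly the paper's equation~\eqref{eqn:z-fixed-special}), and injectivity by moving an arbitrary quadruple to the standard one with an explicit element of $H$ whose $G[t,u]$-component is identified by the same ``no negative powers of $t$ / of $u$'' comparison as the paper's equation~\eqref{eqn:key}. The only cosmetic differences are that you perform the normalization in stages rather than via a single $H$-element, and you verify independence of the choice of $q_1,q_2$ directly rather than letting it follow from injectivity.
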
 

\begin{ex} \label{ex:sl2-special}
Take $G=\SL(2)$ and let $\gamma=[\begin{smallmatrix}1&t^{-1}\\0&1\end{smallmatrix}]\in\Gr_0^\alpha$. By~\eqref{eqn:sl2}, we can choose $q_1=[\begin{smallmatrix}0&1\\-1&t\end{smallmatrix}]$ and $q_2=[\begin{smallmatrix}1&0\\t&1\end{smallmatrix}]$. Then the special representative of the $H$-orbit $\Psi^{-1}(\gamma)$ specified in Proposition~\ref{prop:psi} is $([\begin{smallmatrix}0&u^{-1}\\-u&t\end{smallmatrix}],[\begin{smallmatrix}t^{-1}&0\\-u&t\end{smallmatrix}],[\begin{smallmatrix}1&0\\0&1\end{smallmatrix}],[\begin{smallmatrix}1&t^{-1}u^{-1}\\0&1\end{smallmatrix}])\in Z$.
\end{ex}

\begin{proof}
We first show that $\Psi$ is well defined. Suppose that $(g_{01}^{00},g_{10}^{00},g_{11}^{01},g_{11}^{10})\in Z$ is such that its $H$-orbit is fixed by the $\Gg_m$-action, and define $(g_{11}^{01})'$ and $(g_{11}^{10})'$ as in Proposition~\ref{prop:psi-prelim}. If we act on $(g_{01}^{00},g_{10}^{00},g_{11}^{01},g_{11}^{10})$ by $(h_{00},h_{01},h_{10},h_{11})\in H$, then $(g_{11}^{01})'$ is replaced by $h_{11}(g_{11}^{01})'$ and $(g_{11}^{10})'$ by $h_{11}(g_{11}^{10})'$, so $((g_{11}^{01})')^{-1}(g_{11}^{10})'$ is left unchanged. Thus the element $\gamma\in G[t^{-1}]_1$ defined in Proposition~\ref{prop:psi-prelim}(2) is indeed an invariant of the $H$-orbit of $(g_{01}^{00},g_{10}^{00},g_{11}^{01},g_{11}^{10})$. The fact that $\Psi$ is $G$-equivariant is equally easy, using the formula for the $G$-action on $Z$ given in~\eqref{eqn:G-action}.

To show the surjectivity of $\Psi$, it suffices to show that $(\tilde{g}_{01}^{00},\tilde{g}_{10}^{00},\tilde{g}_{11}^{01},\tilde{g}_{11}^{10})$ as defined in the statement does indeed have the property that its $H$-orbit is fixed by the $\Gg_m$-action. This follows from the obvious equation
\begin{equation} \label{eqn:z-fixed-special}
(\tilde{g}_{01}^{00},\tilde{g}_{10}^{00},\tilde{g}_{11}^{01},\tilde{g}_{11}^{10})\left|_{\substack{t\mapsto z^{-1}t\\u\mapsto zu\phantom{{}^{-1}}}}\right.\\
=(\tilde{g}_{01}^{00}\lambda(z),\tilde{g}_{10}^{00}\lambda(z),\tilde{g}_{11}^{01},\tilde{g}_{11}^{10})
\end{equation}
for all $z\in\Gg_m$. 

To prove the injectivity of $\Psi$, we must show that if $\Psi$ sends the $H$-orbit of $(g_{01}^{00},g_{10}^{00},g_{11}^{01},g_{11}^{10})\in Z$ to $\gamma\in\Gr_0^\lambda$, then that $H$-orbit contains the above quadruple $(\tilde{g}_{01}^{00},\tilde{g}_{10}^{00},\tilde{g}_{11}^{01},\tilde{g}_{11}^{10})$. Now~\eqref{eqn:cocycle2} implies that
\[
(g_{11}^{01})''g_{01}^{00}(g_{10}^{00})^{-1}((g_{11}^{10})'')^{-1}=((g_{11}^{01})')^{-1}(g_{11}^{10})'=(q_1|_{t\mapsto tu})t^\lambda u^\lambda (q_2|_{t\mapsto tu}),
\]
which rearranges to
\begin{equation} \label{eqn:key}
u^{-\lambda}(q_1^{-1}|_{t\mapsto tu})(g_{11}^{01})''g_{01}^{00}=t^\lambda (q_2|_{t\mapsto tu})(g_{11}^{10})''g_{10}^{00}.
\end{equation}
The left-hand side of~\eqref{eqn:key} clearly belongs to $G[t,u,u^{-1}]$ and the right-hand side clearly belongs to $G[t,t^{-1},u]$, so their common value is some element $h_{00}$ of $G[t,u]$. We then have
\begin{equation}
(h_{00},(g_{11}^{01})'',(g_{11}^{10})'',((g_{11}^{01})')^{-1})\cdot(g_{01}^{00},g_{10}^{00},g_{11}^{01},g_{11}^{10})=(\tilde{g}_{01}^{00},\tilde{g}_{10}^{00},\tilde{g}_{11}^{01},\tilde{g}_{11}^{10}),
\end{equation}
as required.
\end{proof}

\subsection{Proof of Theorem~\ref{thm:normalizer}}
\label{ss:proof}

We have used the same letter $\Psi$ for the bijection defined in Proposition~\ref{prop:psi} as we used in the introduction for the bijection defined by Braverman and Finkelberg. Of course, this is to be justified by the next result, which says that the two bijections are the same. 

Recall from the introduction that for any $\lambda\in\Lambda^+$, $\Bun_G^\lambda(\Aa^2/\Gg_m)$ denotes the closed subvariety of $\Bun_G(\Aa^2/\Gg_m)$ parametrizing $\Gg_m$-equivariant pairs $(\cF,\Phi)$ for which the induced action of $\Gg_m$ on the fibre of $\cF$ at $(0,0)\in\Aa^2$ gives rise to a homomorphism $\Gg_m\to G$ in the $G$-conjugacy class of $\lambda$. As observed in~\cite{bf}, $\Bun_G(\Aa^2/\Gg_m)$ decomposes as a disconnected union $\coprod_{\lambda\in\Lambda^+}\Bun_G^\lambda(\Aa^2/\Gg_m)$.

\begin{prop} \label{prop:identification}
The bijection $\Psi:\Bun_G(\Aa^2/\Gg_m)\to\Grom$ defined in Proposition~\ref{prop:psi} restricts to an isomorphism of varieties
\[
\Bun_G^\lambda(\Aa^2/\Gg_m)\simto\Gr_0^\lambda\ \text{ for each }\lambda\in\Lambda^+,
\]
which is the same as the isomorphism defined by Braverman and Finkelberg in the $\mu=0$ special case of ~\cite[Theorem 5.2(2)]{bf}.
\end{prop}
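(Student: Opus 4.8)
\textbf{Proof plan for Proposition~\ref{prop:identification}.}

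The plan is to check two things: first, that the bijection $\Psi$ of Proposition~\ref{prop:psi} respects the decomposition into the pieces indexed by $\lambda\in\Lambda^+$, matching $\Bun_G^\lambda(\Aa^2/\Gg_m)$ with $\Gr_0^\lambda$; second, that the resulting map of varieties is the Braverman--Finkelberg isomorphism. For the first point, I would use the explicit inverse $\Psi^{-1}$ given in Proposition~\ref{prop:psi}: if $\gamma\in\Gr_0^\lambda$ then equation~\eqref{eqn:z-fixed-special} exhibits the one-parameter family $(h_{00}^z,h_{01}^z,h_{10}^z,h_{11}^z)$ witnessing $\Gg_m$-invariance of the $H$-orbit of $(\tilde g_{01}^{00},\tilde g_{10}^{00},\tilde g_{11}^{01},\tilde g_{11}^{10})$, namely with $h_{00}^z=h_{01}^z=h_{10}^z=\lambda(z)^{-1}$ (up to normalization) and $h_{11}^z=1$. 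The induced $\Gg_m$-action on the fibre of $\cF$ at $(0,0)\in\Aa^2=U_{00}$ is exactly the action of the transition-changing element $h_{00}^z$ evaluated at the origin, which is $\lambda(z)$; hence the associated homomorphism $\Gg_m\to G$ is in the conjugacy class of $\lambda$. This shows $\Psi^{-1}(\Gr_0^\lambda)\subseteq\Bun_G^\lambda(\Aa^2/\Gg_m)$, and since both sides run over all of $\Lambda^+$ and $\Psi$ is a bijection, we get $\Psi(\Bun_G^\lambda(\Aa^2/\Gg_m))=\Gr_0^\lambda$ for every $\lambda$.

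Next I would upgrade the set-theoretic bijection $\Bun_G^\lambda(\Aa^2/\Gg_m)\simto\Gr_0^\lambda$ to an isomorphism of varieties. The formula for $\Psi$ (take the distinguished factorizations of Proposition~\ref{prop:psi-prelim}(1), form $((g_{11}^{01})')^{-1}(g_{11}^{10})'$, and substitute $tu\mapsto t$) is manifestly given by algebraic operations, as is the formula for $\Psi^{-1}$ in terms of a Birkhoff-type factorization $\gamma=q_1 t^\lambda q_2$; on the stratum $\Gr_0^\lambda$ such factorizations can be chosen locally algebraically, so both $\Psi$ and $\Psi^{-1}$ are morphisms on the relevant (ind-)varieties. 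Alternatively, and more cleanly, one can invoke that a bijective morphism between the relevant smooth (or normal) varieties together with an algebraic inverse formula suffices; the key input is that the factorizations in Propositions~\ref{prop:factorization} and~\ref{prop:psi-prelim} depend algebraically on the data, which follows from the uniqueness statements there by a standard argument with the group structure.

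Finally, to identify this isomorphism with the one of~\cite[Theorem~5.2(2)]{bf} in the case $\mu=0$, I would trace through the Braverman--Finkelberg construction: their map also proceeds by restricting a $\Gg_m$-equivariant bundle on $\Pp^1\times\Pp^1$ to the two ``vertical'' charts, using triviality off the single jumping line $\Pp^1\times\{0\}$ (the content of Proposition~\ref{prop:psi-prelim}(1)), and reading off an element of the one-variable affine Grassmannian from the comparison of the two trivializations via the $\Gg_m$-weight decomposition; this is exactly the element $\gamma$ extracted in Proposition~\ref{prop:psi-prelim}(2). So the two recipes coincide on the nose once one matches conventions (which factor of $\Pp^1\times\Pp^1$ is contracted, and the sign in $t\mapsto tu$). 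The main obstacle I anticipate is the bookkeeping in this last step: making the dictionary between the present four-chart transition-function description and the (more sheaf-theoretic) formulation in~\cite{bf} precise enough to see that no discrepancy of conventions — in particular no extra twist by $\lambda$ or inversion — creeps in. Everything else is routine, resting on the uniqueness clauses already established.
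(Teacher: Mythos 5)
Your first step coincides with the paper's: the paper also reads off from the first component of~\eqref{eqn:z-fixed-special} that the $\Gg_m$-action on the fibre of $\cF$ at the origin is via $\lambda$, hence $\Psi^{-1}(\Gr_0^\lambda)=\Bun_G^\lambda(\Aa^2/\Gg_m)$. One small correction there: matching~\eqref{eqn:z-fixed-special} against~\eqref{eqn:z-fixed} forces $h_{01}^z=h_{10}^z=h_{11}^z=1$ and $h_{00}^z=\lambda(z)^{-1}$; your choice $h_{01}^z=h_{10}^z=\lambda(z)^{-1}$ does not lie in $H$ (those components must reduce to the identity at $u^{-1}=0$, resp.\ $t^{-1}=0$), and the hedge ``up to normalization'' does not repair this. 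For the variety-isomorphism claim you propose a direct argument via local algebraicity of the factorizations; the paper instead deliberately omits the direct check and deduces the isomorphism of varieties from the identification with the known Braverman--Finkelberg isomorphism. Your direct route is plausible (the paper itself remarks it is ``not hard''), but as written it is only a sketch, so the weight of the proof shifts entirely onto the last step.

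That last step is where there is a genuine gap. You assert that Braverman and Finkelberg's map ``reads off an element of the one-variable affine Grassmannian from the comparison of the two trivializations via the $\Gg_m$-weight decomposition'' --- but that is a description of the author's own construction $\Psi$ from Proposition~\ref{prop:psi-prelim}, not of the map in~\cite{bf}; asserting the two recipes ``coincide on the nose'' on that basis is circular. The actual mechanism in the paper is different and concrete: because the special representative has $\tilde{g}_{11}^{01}=1$, the trivialization $\Phi$ on $\Pp^1\times\{\infty\}$ extends to $\Pp^1\times(\Pp^1\smallsetminus 0)$, so the principle of~\cite[Proposition 3.4]{bfg} applies and the pair $(\cF,\Phi)$ is determined by the morphism $f_{(\cF,\Phi)}:\Pp^1\to\Gr$ sending $t_0$ to the restriction of $(\cF,\Phi)$ to the vertical line $\{t_0\}\times\Pp^1$; the Braverman--Finkelberg isomorphism is $(\cF,\Phi)\mapsto f_{(\cF,\Phi)}(1)$. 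Since $\tilde{g}_{11}^{10}=\gamma|_{t\mapsto tu}$, one computes $f_{(\cF,\Phi)}(t_0)=\gamma|_{t\mapsto t_0t}$, whence $f_{(\cF,\Phi)}(1)=\gamma$. Your plan flags this dictionary as the ``main obstacle'' but does not supply the two ingredients that actually close it, namely the reduction to restrictions along vertical lines via $\tilde{g}_{11}^{01}=1$ and the explicit evaluation at $t_0=1$.
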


\begin{proof}
Let $\gamma\in\Grom$, let $\lambda\in\Lambda^+$ be such that $\gamma\in\Gr_0^\lambda$, and let $(\tilde{g}_{01}^{00},\tilde{g}_{10}^{00},\tilde{g}_{11}^{01},\tilde{g}_{11}^{10})\in Z$ be the special representative of $\Psi^{-1}(\gamma)$ defined in Proposition~\ref{prop:psi}. This quadruple $(\tilde{g}_{01}^{00},\tilde{g}_{10}^{00},\tilde{g}_{11}^{01},\tilde{g}_{11}^{10})$ defines a pair $(\cF,\Phi)\in\Bun_G(\Aa^2)$ as usual. The fact that $\tilde{g}_{11}^{01}=1$ means that the restriction of the trivialization $\Phi$ to $\Pp^1\times\{\infty\}$ extends to $\Pp^1\times(\Pp^1\smallsetminus 0)$.  

The first component of~\eqref{eqn:z-fixed-special} exactly says that the action of $\Gg_m$ on the fibre of $\cF$ at $(0,0)\in\Aa^2$ gives rise to the homomorphism $\lambda:\Gg_m\to G$. We conclude that $\Psi^{-1}(\Gr_0^\lambda)$ is indeed $\Bun_G^\lambda(\Aa^2/\Gg_m)$. 

It is not hard to show directly that $\Psi$ restricts to an isomorphism of varieties $\Psi^{-1}(\Gr_0^\lambda)\simto\Gr_0^\lambda$, but we will omit this since it follows anyway from the identification with the known isomorphism. 

Braverman and Finkelberg describe their isomorphism $\Bun_G^\lambda(\Aa^2/\Gg_m)\simto\Gr_0^\lambda$ in the proof of~\cite[Theorem 5.2]{bf}, which incorporates the proof of~\cite[Lemma 5.3]{bf}. They use a principle established in~\cite[Proposition 3.4]{bfg}, which states, in part, that if the restriction of $\Phi$ to $\Pp^1\times\{\infty\}$ extends to $\Pp^1\times(\Pp^1\smallsetminus 0)$, then the isomorphism class of $(\cF,\Phi)$ is determined by the morphism $f_{(\cF,\Phi)}:\Pp^1\to\Bun_G(\Pp^1;\Pp^1\smallsetminus 0)=\Gr$ which maps each $t_0\in\Pp^1$ to the isomorphism class of the restriction of $(\cF,\Phi)$ to $\{t_0\}\times\Pp^1$. Then the isomorphism $\Bun_G^\lambda(\Aa^2/\Gg_m)\simto\Gr_0^\lambda$ sends $(\cF,\Phi)$ to $f_{(\cF,\Phi)}(1)$. In the case of the pair $(\cF,\Phi)$ determined by $(\tilde{g}_{01}^{00},\tilde{g}_{10}^{00},\tilde{g}_{11}^{01},\tilde{g}_{11}^{10})$ as above, the fact that $\tilde{g}_{11}^{10}=\gamma|_{t\mapsto tu}$ means that $f_{(\cF,\Phi)}(t_0)=\gamma|_{t\mapsto t_0 t}$ for all $t_0\in\Aa^1\smallsetminus 0$, so the Braverman--Finkelberg isomorphism sends $(\cF,\Phi)$ to $\gamma$ as claimed.
\end{proof}

With our explicit definition of $\Psi$, the proof of Theorem~\ref{thm:normalizer} is trivial:
\begin{proof}
Suppose that $(g_{01}^{00},g_{10}^{00},g_{11}^{01},g_{11}^{10})\in Z$ is such that its $H$-orbit is fixed by the $\Gg_m$-action, and define $(g_{11}^{01})'$, $(g_{11}^{10})'$, $\gamma$ as in Proposition~\ref{prop:psi-prelim}. In view of~\eqref{eqn:diag-action}, acting by $[\begin{smallmatrix}\alpha&0\\0&\beta\end{smallmatrix}]$ changes $(g_{11}^{01})'$ and $(g_{11}^{10})'$ by the substitutions $t\mapsto \alpha^{-1}t$, $u\mapsto\beta^{-1}u$, so it changes $\gamma$ by the substitution $t\mapsto \alpha^{-1}\beta^{-1} t$, as claimed. In view of~\eqref{eqn:switch-action}, acting by $[\begin{smallmatrix}0&1\\1&0\end{smallmatrix}]$ swaps $(g_{11}^{01})'$ and $(g_{11}^{10})'$ and interchanges $t$ and $u$, so it replaces $\gamma$ by $\gamma^{-1}$, as claimed.\end{proof}

\begin{rmk} \label{rmk:uhlenbeck}
Another, equally important, part of~\cite[Theorem 5.2]{bf} is the statement that the isomorphism $\Bun_G^\lambda(\Aa^2/\Gg_m)\simto\Gr_0^\lambda$ extends to an isomorphism between natural `closures' of these varieties, namely the Uhlenbeck moduli space $\cU_G^\lambda(\Aa^2/\Gg_m)$ and the variety $\Grbar_0^\lambda$. (Really, as mentioned in Remark~\ref{rmk:slice}, Braverman and Finkelberg consider the more general situation of the transverse slice $\Grbar_\mu^\lambda$ where $\mu$ is not necessarily $0$.) Since an automorphism of a variety is determined by its restriction to a dense subset, it follows immediately from Theorem~\ref{thm:normalizer} that the action of $N_{\GL(2)}(\Gg_m)$ on $\cU_G^\lambda(\Aa^2/\Gg_m)\cup\cU_G^{-w_0\lambda}(\Aa^2/\Gg_m)$ corresponds to the action on $\Grbar_0^\lambda\cup\Grbar_0^{-w_0\lambda}$ by the same formulas as in Theorem~\ref{thm:normalizer}. 
\end{rmk}   

\subsection{$N$-equivariant bundles}
\label{ss:n-equiv}

We now turn to the moduli space $\Bun_G(\Aa^2/N)$. As explained after Proposition~\ref{prop:free}, $\Bun_G(\Aa^2/N)$ can be identified with the fixed-point set in $\Bun_G(\Aa^2)$ of the action of $N$, and since $N=\langle\Gg_m,[\begin{smallmatrix}0&1\\-1&0\end{smallmatrix}]\rangle$, this is the same as the fixed-point set in $\Bun_G(\Aa^2/\Gg_m)$ of the action of $[\begin{smallmatrix}0&1\\-1&0\end{smallmatrix}]$. So most of Theorem~\ref{thm:normalizer2} is an immediate consequence of Theorem~\ref{thm:normalizer}; we need only explain the parts relating to disconnected union decompositions.

Let $\Xi$ denote the set of $G$-conjugacy classes of homomorphisms $N\to G$. We have a natural disjoint union $\Xi=\bigsqcup_{\lambda\in\Lambda^+}\Xi(\lambda)$, where $\Xi(\lambda)$ is the set of $G$-conjugacy classes containing some $\tau:N\to G$ whose restriction to $\Gg_m$ is $\lambda$.

For a given $\lambda\in\Lambda^+$, extending it to $\tau:N\to G$ amounts to choosing a (necessarily semisimple) element $\sigma=\tau([\begin{smallmatrix}0&1\\-1&0\end{smallmatrix}])\in G$ satisfying 
\begin{equation} \label{eqn:sigma-eqn}
\sigma^2=\lambda(-1)\;\text{ and }\;\sigma\lambda(z)\sigma^{-1}=\lambda(z)^{-1}\,\text{ for all }\,z\in\Gg_m.
\end{equation} 
Let $\Sigma(\lambda)$ denote the set of $\sigma\in G$ satisfying~\eqref{eqn:sigma-eqn}, which is a union of $Z_G(\lambda)$-conjugacy classes where $Z_G(\lambda)$ means the centralizer of the image of $\lambda$. Then $\Xi(\lambda)$ is in bijection with the set of $Z_G(\lambda)$-conjugacy classes in $\Sigma(\lambda)$. Note that $\Sigma(\lambda)$ and hence $\Xi(\lambda)$ are empty if $w_0\lambda\neq-\lambda$, i.e.\ $\lambda\notin\Lambda_1^+$.

Now suppose that $\lambda\in\Lambda_1^+$. Since the $Z_G(\lambda)$-conjugacy classes in $\Sigma(\lambda)$ are all closed, we have a disconnected union
\begin{equation} \label{eqn:disconnected}
\Bun_{G}^\lambda(\Aa^2/N)=\coprod_{\xi\in\Xi(\lambda)} \Bun_{G}^\xi(\Aa^2/N),
\end{equation} 
where $\Bun_{G}^\xi(\Aa^2/N)$ is the subvariety of $\Bun_G(\Aa^2/N)$ parametrizing $N$-equivariant pairs $(\cF,\Phi)$ for which the induced action of $N$ on the fibre of $\cF$ at $(0,0)\in\Aa^2$ gives rise to a homomorphism $N\to G$ in the $G$-conjugacy class $\xi$. By Theorem~\ref{thm:normalizer}, there is a corresponding disconnected union
\begin{equation} \label{eqn:disconnected2}
(\Gr_0^\lambda)^\iota=\coprod_{\xi\in\Xi(\lambda)} (\Gr_0)^{\iota,\xi},
\end{equation}
where $(\Gr_0)^{\iota,\xi}:=\Psi(\Bun_{G}^\xi(\Aa^2/N))$. We will give an alternative description of $(\Gr_0)^{\iota,\xi}$ in Proposition~\ref{prop:sigma} below. This completes the proof of Theorem~\ref{thm:normalizer2}.

If $\Xi(\lambda)$ is empty, then so are $\Bun_{G}^\lambda(\Aa^2/N)$ and $(\Gr_0^\lambda)^\iota$. (In the case $G=\SL(2)$, this provides another proof of Lemma~\ref{lem:even-empty}.) If $\Xi(\lambda)$ is nonempty, it is still possible that the varieties $\Bun_{G}^\xi(\Aa^2/N)$ and $(\Gr_0)^{\iota,\xi}$ are empty for some $\xi\in\Xi(\lambda)$. For example, this happens when $\lambda=0$ and $\xi$ corresponds to the $G$-conjugacy class of a nontrivial involution $\sigma\in G$. We do not know a general criterion for nonemptiness of $\Bun_{G}^\xi(\Aa^2/N)\cong(\Gr_0)^{\iota,\xi}$, but we will give one in the case $G=\SL(r)$ in Proposition~\ref{prop:glr}.

By analogy with the varieties $\Bun_G^\lambda(\Aa^2/\Gg_m)\cong\Gr_0^\lambda$, it is natural to suspect that the nonempty varieties $\Bun_{G}^\xi(\Aa^2/N)\cong(\Gr_0)^{\iota,\xi}$ are all connected. We will prove this in the case $G=\SL(r)$ in Proposition~\ref{prop:glr}. It does not seem to be easily deducible from the following alternative description of $(\Gr_0)^{\iota,\xi}$. 

\begin{prop} \label{prop:sigma}
Let $\lambda\in\Lambda_1^+$ and $\gamma\in(\Gr_0^\lambda)^\iota$.
\begin{enumerate} 
\item If we write $\gamma=q_1t^\lambda q_2$ for $q_1,q_2\in G[t]$, the element
\[
\sigma:=\left(t^\lambda(q_2|_{t\mapsto 0})(q_1|_{t\mapsto 0})t^\lambda\right)|_{t\mapsto 0}\in G
\]
is well defined and belongs to $\Sigma(\lambda)$. Moreover, its $Z_G(\lambda)$-conjugacy class depends only on $\gamma$, not on the choice of $q_1,q_2$. 
\item
Let $\xi\in\Xi(\lambda)$. Then $\gamma$ belongs to $(\Gr_0)^{\iota,\xi}$ if and only if $\sigma$ belongs to the $Z_G(\lambda)$-conjugacy class determined by $\xi$.
\end{enumerate}
\end{prop}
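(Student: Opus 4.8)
The plan is to trace through the explicit construction of $\Psi^{-1}$ from Proposition~\ref{prop:psi} and extract the action of $[\begin{smallmatrix}0&1\\-1&0\end{smallmatrix}]$ on the fibre at the origin. Given $\gamma=q_1t^\lambda q_2\in(\Gr_0^\lambda)^\iota$, the special representative of $\Psi^{-1}(\gamma)$ is the quadruple $(\tilde g_{01}^{00},\tilde g_{10}^{00},\tilde g_{11}^{01},\tilde g_{11}^{10})$ with $\tilde g_{01}^{00}=(q_1|_{t\mapsto tu})u^\lambda$, $\tilde g_{10}^{00}=(q_2^{-1}|_{t\mapsto tu})t^{-\lambda}$, $\tilde g_{11}^{01}=1$, $\tilde g_{11}^{10}=\gamma|_{t\mapsto tu}$. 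Equation~\eqref{eqn:z-fixed-special} already records the $\Gg_m$-action in the form $h_{00}^z=\lambda(z)^{-1}$ on the relevant component, so the fibre of $\cF$ at the origin $(0,0)\in U_{00}$ carries the $\Gg_m$-action $z\mapsto\lambda(z)$. To get the $N$-action I would apply the formula~\eqref{eqn:switch-action} for $[\begin{smallmatrix}0&1\\1&0\end{smallmatrix}]$ composed with the diagonal element $[\begin{smallmatrix}i&0\\0&-i\end{smallmatrix}]$ (or just use $[\begin{smallmatrix}0&1\\-1&0\end{smallmatrix}]$ directly via~\eqref{eqn:diag-action} and~\eqref{eqn:switch-action}), producing a new quadruple in the same $H$-orbit as the special representative, and then track the group element $h_{00}\in H$ that conjugates back to the special representative. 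Its value at $(0,0)$, i.e.\ $h_{00}|_{t\mapsto0,u\mapsto0}$, is the required $\sigma$.

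The first step is therefore the computation: applying $[\begin{smallmatrix}0&1\\-1&0\end{smallmatrix}]$ to the special quadruple, one uses the cocycle relation~\eqref{eqn:key} (with the roles of the two factors analyzed as in the injectivity argument of Proposition~\ref{prop:psi}) to find the $H$-element bringing it back to special form. This is where the product $t^\lambda(q_2|_{t\mapsto0})(q_1|_{t\mapsto0})t^\lambda$ will appear: the combination $q_2,q_1$ comes from the two transition functions $\tilde g_{10}^{00}$ and $\tilde g_{01}^{00}$ getting swapped and re-split, and the two $t^\lambda$ factors come from the $u^\lambda$ in $\tilde g_{01}^{00}$ and the $t^{-\lambda}$ in $\tilde g_{10}^{00}$ together with the variable interchange. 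Setting $t\mapsto0$ (after the well-definedness is checked) extracts $\sigma$ at the origin. For part (1), well-definedness means checking that the displayed expression actually lies in $G[t]$ (so that one may set $t\mapsto0$), which amounts to a degree/pole count using that $\gamma\in\Grom$ forces cancellation, together with verifying~\eqref{eqn:sigma-eqn}: $\sigma^2=\lambda(-1)$ follows from $\iota(\gamma)=\gamma$, which gives an identity relating $q_1,q_2$ to $q_1|_{t\mapsto-t},q_2|_{t\mapsto-t}$, and $\sigma\lambda(z)\sigma^{-1}=\lambda(z)^{-1}$ follows from the relation $t^\lambda$ conjugating in the expected way. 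The independence of the $Z_G(\lambda)$-conjugacy class from the choice of $q_1,q_2$ comes from the fact that two such factorizations differ by right/left multiplication by an element of $G[t]\cap t^{-\lambda}G[t]t^\lambda$, i.e.\ of the parabolic with Levi $Z_G(\lambda)$, whose value at $t\mapsto0$ lies in $Z_G(\lambda)$.

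For part (2) the key point is that $\sigma$, by construction, is precisely $\tau([\begin{smallmatrix}0&1\\-1&0\end{smallmatrix}])$ where $\tau:N\to G$ is the homomorphism giving the $N$-action on the fibre of $\cF$ at the origin; part (1) already identifies $\tau|_{\Gg_m}=\lambda$. Hence the $G$-conjugacy class of $\tau$ is, by definition, the class $\xi$ indexing the stratum $\Bun_G^\xi(\Aa^2/N)$ containing $(\cF,\Phi)$, and $\gamma\in(\Gr_0)^{\iota,\xi}=\Psi(\Bun_G^\xi(\Aa^2/N))$ iff $(\cF,\Phi)$ lies in that stratum iff the $G$-conjugacy class of $(\lambda,\sigma)$ equals $\xi$ iff, fixing the representative $\lambda$ of $\tau|_{\Gg_m}$, the element $\sigma$ lies in the $Z_G(\lambda)$-conjugacy class determined by $\xi$ under the bijection $\Xi(\lambda)\leftrightarrow\{Z_G(\lambda)\text{-classes in }\Sigma(\lambda)\}$ recalled before the proposition. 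So (2) is essentially a bookkeeping translation once (1) is established.

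I expect the main obstacle to be the explicit computation in the first step: carefully applying~\eqref{eqn:switch-action} and~\eqref{eqn:diag-action} to the special quadruple and then running the re-splitting/re-normalization (as in the injectivity part of the proof of Proposition~\ref{prop:psi}) to isolate the conjugating element $h_{00}$, without sign errors in the $\lambda(-1)$ bookkeeping. A secondary subtlety is the well-definedness claim in (1) — showing the stated expression genuinely lies in $G[t]$ and that setting $t\mapsto0$ is legitimate — which requires using the precise form $\gamma\in\Grom$ (so that $\gamma=q_1t^\lambda q_2$ with $q_i\in G[t]$ but also $\gamma\in G[t^{-1}]_1$) to control which powers of $t$ survive.
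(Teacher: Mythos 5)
Your proposal follows essentially the same route as the paper: the paper likewise computes the $H$-element carrying $[\begin{smallmatrix}0&1\\-1&0\end{smallmatrix}]\cdot(\tilde{g}_{01}^{00},\tilde{g}_{10}^{00},\tilde{g}_{11}^{01},\tilde{g}_{11}^{10})$ back to the special quadruple (its first component is $p^{-1}$ with $p|_{t,u\mapsto 0}=\sigma$), identifies $\sigma$ with $\tau([\begin{smallmatrix}0&1\\-1&0\end{smallmatrix}])$ for the fibre action at the origin, and obtains membership in $\Sigma(\lambda)$, the conjugacy-class independence, and part (2) exactly as you describe. The one point to adjust is your account of well-definedness: the paper derives it not from a pole count using $\gamma\in\Grom$ but from the identity \eqref{eqn:tu}, a consequence of $\iota(\gamma)=\gamma$, whose two sides visibly lie in $G[t,t^{-1},u]$ and $G[t,u,u^{-1}]$ respectively, forcing their common value $p$ into $G[t,u]$ --- the $\iota$-fixedness, not just $\gamma\in\Grom$, is what makes the displayed expression regular at $t=0$.
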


\begin{proof}
The assumption that $\iota(\gamma)=\gamma$ implies that
\begin{equation}
(q_1|_{t\mapsto -t})t^\lambda\lambda(-1)(q_2|_{t\mapsto -t})q_1t^\lambda q_2=1,
\end{equation}
and hence (after replacing $t$ by $tu$ and rearranging) 
\begin{equation} \label{eqn:tu}
t^\lambda (q_2|_{t\mapsto tu})(q_1|_{t\mapsto -tu}) t^\lambda=
u^{-\lambda}(q_1^{-1}|_{t\mapsto tu})(q_2^{-1}|_{t\mapsto -tu})u^{-\lambda}\lambda(-1).
\end{equation}
Now the left-hand side of~\eqref{eqn:tu} clearly belongs to $G[t,t^{-1},u]$ while the right-hand side clearly belongs to $G[t,u,u^{-1}]$. We conclude that their common value, $p$ say, in fact belongs to $G[t,u]$, so it make sense to define $\sigma=p|_{t,u\mapsto 0}\in G$. If we make the substitution $u\mapsto 0$ before the substitution $t\mapsto 0$, we obtain the expression for $\sigma$ given in the statement, showing that it is well defined.

We could prove the remaining assertions in (1) concerning $\sigma$ by simple calculations, but in order to prove (2) also it is more efficient to argue as follows. 

Recall from Proposition~\ref{prop:psi} the special quadruple $(\tilde{g}_{01}^{00},\tilde{g}_{10}^{00},\tilde{g}_{11}^{01},\tilde{g}_{11}^{10})\in Z$ in the $H$-orbit corresponding to $\gamma$ under $\Psi$. Since $\gamma$ is fixed by $\iota$, the $H$-orbit of $(\tilde{g}_{01}^{00},\tilde{g}_{10}^{00},\tilde{g}_{11}^{01},\tilde{g}_{11}^{10})$ is preserved by the action of $[\begin{smallmatrix}0&1\\-1&0\end{smallmatrix}]\in\SL(2)$. In fact, an easy calculation using~\eqref{eqn:diag-action} and~\eqref{eqn:switch-action} shows that
\begin{equation} \label{eqn:switch-fixed-special}
[\begin{smallmatrix}0&1\\-1&0\end{smallmatrix}]\cdot(\tilde{g}_{01}^{00},\tilde{g}_{10}^{00},\tilde{g}_{11}^{01},\tilde{g}_{11}^{10})=(p^{-1},1,1,\gamma_{t\mapsto -tu})\cdot(\tilde{g}_{01}^{00},\tilde{g}_{10}^{00},\tilde{g}_{11}^{01},\tilde{g}_{11}^{10}).
\end{equation}
The quadruple $(\tilde{g}_{01}^{00},\tilde{g}_{10}^{00},\tilde{g}_{11}^{01},\tilde{g}_{11}^{10})$ defines as usual a pair $(\cF,\Phi)\in\Bun_{G}^\lambda(\Aa^2/N)$. Let $\tau:N\to G$ be the homomorphism obtained from the action of $N$ on the fibre of $\cF$ at $(0,0)\in\Aa^2=U_{00}$, using the original identification of $\cF|_{U_{00}}$ with the trivial $G$-bundle on $U_{00}$. As already seen in the proof of Proposition~\ref{prop:identification}, the first component of~\eqref{eqn:z-fixed-special} says that the restriction of $\tau$ to $\Gg_m$ is $\lambda$. Likewise, the first component of~\eqref{eqn:switch-fixed-special} says that $\tau([\begin{smallmatrix}0&1\\-1&0\end{smallmatrix}])=p|_{t,u\mapsto 0}=\sigma$. From this it is automatic that $\sigma$ belongs to $\Sigma(\lambda)$, and that its $Z_G(\lambda)$-conjugacy class depends only on $\gamma$ (because the $G$-conjugacy class of $\tau$ depends only on the isomorphism class of $(\cF,\Phi)$). This completes part (1), and part (2) also follows by definition of $(\Gr_0)^{\iota,\xi}$.
\end{proof}

\begin{ex}
Continue Example~\ref{ex:sl2-special} with $G=\SL(2)$, $\lambda=\alpha$ (the positive coroot), and $\gamma=[\begin{smallmatrix}1&t^{-1}\\0&1\end{smallmatrix}]$. Using the same choice of $q_1$ and $q_2$ as in Example~\ref{ex:sl2-special}, the element $\sigma$ defined in Proposition~\ref{prop:sigma} is $[\begin{smallmatrix}0&1\\-1&0\end{smallmatrix}]$. In this case, $\Sigma(\alpha)$ consists of a single $Z_G(\alpha)$-conjugacy class, so $\Xi(\alpha)$ has only one element.
\end{ex}

\begin{ex}
More generally, for any $G$ suppose that $\gamma=e(x)$ for some $x\in\cN$. As seen in~\eqref{eqn:exp-inclusion}, $\gamma\in(\Gr_0^{\lambda_{\cO}})^\iota$ where $\cO$ is the $G$-orbit of $x$. If we let $\varphi:\SL(2)\to G$ be a homomorphism such that $(d\varphi)([\begin{smallmatrix}0&1\\0&0\end{smallmatrix}])=x$, then applying $\varphi$ to the previous example shows that the element $\sigma$ defined in Proposition~\ref{prop:sigma} is $\varphi([\begin{smallmatrix}0&1\\-1&0\end{smallmatrix}])$. So Proposition~\ref{prop:sigma} says that $e(x)\in(\Gr_0)^{\iota,\xi}$ where $\xi$ is the $G$-conjugacy class of the restriction of $\varphi$ to $N$. This was already clear from the definition $(\Gr_0)^{\iota,\xi}=\Psi(\Bun_{G}^\xi(\Aa^2/N))$: it follows from the commutativity of the diagram~\eqref{eqn:intro-diag}.
\end{ex}

\begin{rmk}
As explained in~\cite[Section 5.1]{bf}, the varieties $\Bun_{G}^\lambda(\Aa^2/\Gg_m)$ are special cases of certain moduli spaces of $\Gamma_{\A_k}$-equivariant principal $G$-bundles on $\Pp^2$, where $\Gamma_{\A_k}\cong\Z/(k+1)\Z$ is a finite cyclic subgroup of $\SL(2)$ (alternatively, moduli spaces of `instantons on a type-$\A_k$ singularity'). This is because $\Gg_m$ contains a copy of $\Gamma_{\A_k}$ for all $k$, and is Zariski-generated by the union of these finite subgroups; one should think of $\Gg_m$ as the type-$\A_\infty$ subgroup of $\SL(2)$. Analogously, $N$ is the type-$\D_\infty$ subgroup of $\SL(2)$; see \S\ref{ss:subgroups}. Hence the varieties $\Bun_G^\xi(\Aa^2/N)$ are special cases of certain moduli spaces of $\Gamma_{\D_k}$-equivariant principal $G$-bundles on $\Pp^2$, where $\Gamma_{\D_k}$ is a binary dihedral subgroup of $\SL(2)$ (`instantons on a type-$\D_k$ singularity'). It would be interesting to extend the results of~\cite{bf} to the latter moduli spaces.
\end{rmk}

\subsection{Principal bundles on $\Pp^2$}
\label{ss:p2}

For the remainder of the paper it will be more convenient to use the definition~\eqref{eqn:old-def} of $\Bun_G(\Aa^2)$ rather than the definition~\eqref{eqn:new-def}, so we want to translate our explicit formula for the bijection $\Psi$ (given in Proposition~\ref{prop:psi}) to the setting of~\eqref{eqn:old-def}. 

As explained in~\cite[Section 3]{atiyah} and~\cite[Section 4.1]{bfg}, the two definitions are linked using the diagram
\begin{equation}
\vcenter{
\xymatrix@R=10pt@C=10pt{
&X\ar[dl]_-{\pi_1}\ar[dr]^-{\pi_2}&\\
\Pp^2&&\Pp^1\times\Pp^1
}}
\end{equation}
where 
\[
X:=\{([z_0:z_1:z_2],\,t,\,u)\in\Pp^2\times\Pp^1\times\Pp^1\,|\,z_1=z_0t,\,z_2=z_0u\},
\]
and $\pi_1$, $\pi_2$ are the obvious projections. In the definition of $X$, we have used homogeneous coordinates $[z_0:z_1:z_2]$ on the $\Pp^2$ factor and inhomogeneous coordinates $t$ and $u$ (as before) on the $\Pp^1$ factors. One must interpret the equations accordingly: for example, if $t=\infty$ then the equation $z_1=z_0t$ should be read as $z_0=0$. Thus, the map $\pi_1$ is a blow-up of $\Pp^2$ at the two points $[0:0:1]$ and $[0:1:0]$ on the line at infinity, and the map $\pi_2$ is a blow-up of $\Pp^1\times\Pp^1$ at the point $(\infty,\infty)$. Neither blow-up affects the open subset $\Aa^2$, so the pull-backs give isomorphisms
\begin{equation}
\Bun_G(\Pp^2;\ell_\infty)\simto\Bun_G(X;X\smallsetminus\Aa^2)\overset{\sim}{\leftarrow}
\Bun_G(\Pp^1\times\Pp^1;D_\infty),
\end{equation} 
where $\ell_\infty=\Pp^2\smallsetminus\Aa^2$ is the line at infinity.

We can present $\Bun_G(\Pp^2;\ell_\infty)$ as a quotient in much the same way as~\eqref{eqn:p1p1presentation}. We use the usual open covering of $\Pp^2$:
\[
\begin{split}
U_0&:=\{[z_0:z_1:z_2]\,|\,z_0\neq 0\}=\Aa^2,\\
U_1&:=\{[z_0:z_1:z_2]\,|\,z_1\neq 0\},\\
U_2&:=\{[z_0:z_1:z_2]\,|\,z_2\neq 0\}.
\end{split}
\] 
A pair $(\cF,\Phi)\in\Bun_G(\Pp^2;\ell_\infty)$ can be constructed by gluing together trivial bundles on these open sets using transition functions $(g_1^0,g_2^0,g_2^1)$, where
\begin{equation} \label{eqn:p2conditions}
\begin{split}
g_1^0&\in G[z_1/z_0,z_0/z_1,z_2/z_0],\\
g_2^0&\in G[z_1/z_0,z_2/z_0,z_0/z_2],\\
g_2^1&\in \ker(G[z_0/z_1,z_2/z_1,z_1/z_2]\to G[z_2/z_1,z_1/z_2]),\\
g_2^0&=g_2^1g_1^0\quad\text{ in }G(z_0,z_1,z_2).
\end{split}
\end{equation}
Let $Z'$ denote the space of triples $(g_1^0,g_2^0,g_2^1)$ satisfying~\eqref{eqn:p2conditions}. On this space we have an action of the group
\begin{equation}
\begin{split}
H':=&G[z_1/z_0,z_2/z_0]\\
&\times \ker(G[z_0/z_1,z_2/z_1]\to G[z_2/z_1]) \\
&\times \ker(G[z_0/z_2,z_1/z_2]\to G[z_1/z_2]),
\end{split}
\end{equation}
defined by
\begin{equation}\label{eqn:p2action}
(h_0,h_1,h_2)\cdot (g_1^0,g_2^0,g_2^1) = (h_1g_1^0h_0^{-1},h_2g_2^0h_0^{-1},h_2g_2^1h_1^{-1}),
\end{equation}
for $(h_0,h_1,h_2)\in H'$ and $(g_1^0,g_2^0,g_2^1)\in Z'$. Then
\begin{equation} \label{eqn:p2presentation}
\Bun_G(\Aa^2)=\Bun_G(\Pp^2;\ell_\infty)\cong Z'/H',
\end{equation} 
and Proposition~\ref{prop:free} implies that the $H'$-action on $Z'$ is free.

In this description, the action of $G$ on $\Bun_G(\Aa^2)$ is induced by the following action on $Z'$:
\begin{equation} \label{eqn:p2-G-action}
g\cdot(g_1^0,g_2^0,g_2^1)=(gg_1^0,gg_2^0,gg_2^1g^{-1}).
\end{equation}
The action of the diagonal subgroup of $\GL(2)$ on $\Bun_G(\Aa^2)$ is induced by the following action on $Z'$:
\begin{equation} \label{eqn:p2-diag-action}
[\begin{smallmatrix}\alpha&0\\0&\beta\end{smallmatrix}]\cdot(g_1^0,g_2^0,g_2^1)
=(g_1^0,g_2^0,g_2^1)\left|_{\substack{z_1/z_0\mapsto \alpha^{-1}z_1/z_0\\z_2/z_0\mapsto \beta^{-1}z_2/z_0}}\right.\, .
\end{equation}
As before, we identify $\Bun_G(\Aa^2/\Gg_m)$ with the fixed-point set of $\Gg_m$ on $\Bun_G(\Aa^2)$, i.e.\ the set of $H'$-orbits in $Z'$ that are fixed by the action of $\Gg_m$.

The bijection $\Psi:\Bun_G(\Aa^2/\Gg_m)\to\Grom$, or more conveniently its inverse, is described as follows.
\begin{prop} \label{prop:p2-psi}
If $\gamma\in\Gr_0^\lambda$ for $\lambda\in\Lambda^+$, i.e.\ 
\[
\gamma=q_1 t^\lambda q_2\;\text{ for some }\,q_1,q_2\in G[t],
\]
then $\Psi^{-1}(\gamma)$ is the $H'$-orbit of the triple $(\tilde{g}_{1}^{0},\tilde{g}_{2}^{0},\tilde{g}_{2}^{1})\in Z'$ defined by
\[
\begin{split}
\tilde{g}_{1}^{0}&=(q_2^{-1}|_{t\mapsto z_1z_2/z_0^2})\,(z_1/z_0)^{-\lambda},\\
\tilde{g}_{2}^{0}&=(q_1|_{t\mapsto z_1z_2/z_0^2})\,(z_2/z_0)^\lambda,\\
\tilde{g}_{2}^{1}&=\gamma|_{t\mapsto z_1z_2/z_0^2}.
\end{split}
\]
\end{prop}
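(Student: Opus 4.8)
The plan is to work on the common blow-up $X$, using that the pull-back maps $\Bun_G(\Pp^2;\ell_\infty)\simto\Bun_G(X;X\smallsetminus\Aa^2)$ and $\Bun_G(\Pp^1\times\Pp^1;D_\infty)\simto\Bun_G(X;X\smallsetminus\Aa^2)$ are isomorphisms. It therefore suffices to show that the $\pi_1$-pull-back of the triple $(\tilde g_1^0,\tilde g_2^0,\tilde g_2^1)$ and the $\pi_2$-pull-back of the special quadruple $(\tilde g^{00}_{01},\tilde g^{00}_{10},\tilde g^{01}_{11},\tilde g^{10}_{11})$ from Proposition~\ref{prop:psi} define isomorphic $G$-bundles on $X$, compatibly with the (standard) trivializations they both carry over $\Aa^2=\pi_1^{-1}(U_0)=\pi_2^{-1}(U_{00})$. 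Setting $V_i:=\pi_1^{-1}(U_i)$ for $i=0,1,2$, the cover $\{V_0,V_1,V_2\}$ of $X$ is the $\pi_1$-pull-back of the cover $\{U_0,U_1,U_2\}$ of $\Pp^2$, so the $\pi_1$-pull-back of the triple is simply the $G$-bundle on $X$ presented on $\{V_0,V_1,V_2\}$ by the transition functions $\tilde g_1^0\circ\pi_1$, $\tilde g_2^0\circ\pi_1$, $\tilde g_2^1\circ\pi_1$. The whole problem thus reduces to re-expressing the $\pi_2$-pull-back of the quadruple in terms of the cover $\{V_0,V_1,V_2\}$.

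First I would record the dictionary between the two covers. On $X$ one has $t=z_1/z_0$, $u=z_2/z_0$, and hence $tu=z_1z_2/z_0^2$, on the open locus $z_0z_1z_2\neq 0$; a short case analysis along the two exceptional lines of $\pi_1$ and the exceptional line of $\pi_2$ then gives $\pi_2^{-1}(U_{00})=V_0$, $\pi_2^{-1}(U_{00}\cap U_{10})=V_0\cap V_1$, $\pi_2^{-1}(U_{00}\cap U_{01})=V_0\cap V_2$, inclusions $\pi_2^{-1}(U_{10})\subseteq V_1$ and $\pi_2^{-1}(U_{01})\subseteq V_2$ of dense open subsets, and $\pi_2(V_1\cap V_2)\subseteq U_{11}$. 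Next I would use the two special features of the quadruple to trivialize the bundle $\cF$ over all of $V_1$ and $V_2$. The condition $\tilde g^{01}_{11}=1$ says the $U_{01}$- and $U_{11}$-trivializations of $\cF$ agree on their overlap, so they glue to a trivialization of $\cF$ on $U_{01}\cup U_{11}=\Pp^1\times(\Pp^1\smallsetminus 0)$ extending the one on $U_{01}$. On the other hand $\tilde g^{10}_{11}=\gamma|_{t\mapsto tu}$ is obtained from $\gamma\in G[t^{-1}]_1$ by substituting $t^{-1}u^{-1}$ for $t^{-1}$, hence is regular on all of $U_{11}=(\Pp^1\smallsetminus 0)\times(\Pp^1\smallsetminus 0)$; transporting the $U_{10}$-trivialization by it thus gives a section over $U_{11}$ that glues with the $U_{10}$-trivialization to a trivialization of $\cF$ on $U_{10}\cup U_{11}=(\Pp^1\smallsetminus 0)\times\Pp^1$. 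Pulling these back by $\pi_2$ and using $V_2\subseteq\pi_2^{-1}(\Pp^1\times(\Pp^1\smallsetminus 0))$ and $V_1\subseteq\pi_2^{-1}((\Pp^1\smallsetminus 0)\times\Pp^1)$, we obtain trivializations of $\pi_2^*\cF$ on $V_1$ and on $V_2$ restricting (on $\pi_2^{-1}(U_{10})$ and $\pi_2^{-1}(U_{01})$) to the pull-backs of the $U_{10}$- and $U_{01}$-trivializations; over $V_0$ we use the $U_{00}$-trivialization.

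Finally, with these trivializations and the chart dictionary, the transition functions of $\pi_2^*\cF$ for the cover $\{V_0,V_1,V_2\}$ come out as the $\pi_2$-pull-backs of $\tilde g^{00}_{10}$ on $V_0\cap V_1$, of $\tilde g^{00}_{01}$ on $V_0\cap V_2$, and of $\tilde g^{00}_{01}(\tilde g^{00}_{10})^{-1}$ on $V_1\cap V_2$; by~\eqref{eqn:cocycle2} and $\tilde g^{01}_{11}=1$ the last equals $\tilde g^{10}_{11}=\gamma|_{t\mapsto tu}$. Substituting $t=z_1/z_0$, $u=z_2/z_0$, $tu=z_1z_2/z_0^2$ turns these three functions on $X$ into exactly $\tilde g_1^0\circ\pi_1$, $\tilde g_2^0\circ\pi_1$, $\tilde g_2^1\circ\pi_1$ --- a literal identity on $V_0\cap V_1$ and $V_0\cap V_2$, and on $V_1\cap V_2$ an identity on the dense open locus $z_0\neq 0$, hence everywhere since both sides are regular on $V_1\cap V_2$. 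This identifies the two $G$-bundles on $X$, and since Proposition~\ref{prop:free} gives that $H'$ acts freely on $Z'$ and the construction is manifestly $G$-equivariant, it follows that $\Psi^{-1}(\gamma)$ is the $H'$-orbit of $(\tilde g_1^0,\tilde g_2^0,\tilde g_2^1)$. I expect the main obstacle to be the second paragraph: correctly matching the three affine charts of $\Pp^2$ with the four of $\Pp^1\times\Pp^1$ across the two blow-ups, and verifying that the $U_{10}$- and $U_{01}$-trivializations genuinely extend over the exceptional loci inside $V_1$ and $V_2$ --- once that is in place, the remaining content is just the elementary substitution $t=z_1/z_0$, $u=z_2/z_0$.
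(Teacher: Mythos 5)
Your proposal is correct and follows essentially the same route as the paper: both pull back to the common blow-up $X$ and match the transition functions of $\pi_1^*(\tilde g_1^0,\tilde g_2^0,\tilde g_2^1)$ against those of $\pi_2^*(\tilde g_{01}^{00},\tilde g_{10}^{00},1,\gamma|_{t\mapsto tu})$ via the substitution $t=z_1/z_0$, $u=z_2/z_0$, using $\tilde g_{11}^{01}=1$ to reduce to a three-chart cover and a density argument on the irreducible $X$ to extend the identities over the exceptional loci. The only slip is that the compatibility to be checked is with the trivializations on $X\smallsetminus\Aa^2$ rather than on $\Aa^2$, but your construction handles this anyway, since the $V_1$- and $V_2$-trivializations are built to restrict to the given boundary trivializations.
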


\begin{ex} \label{ex:sl2-special-p2}
Continue Example~\ref{ex:sl2-special} with $G=\SL(2)$ and $\gamma=[\begin{smallmatrix}1&t^{-1}\\0&1\end{smallmatrix}]\in\Gr_0^\alpha$. Choosing $q_1$ and $q_2$ as in Example~\ref{ex:sl2-special}, the special representative of $\Psi^{-1}(\gamma)$ specified in Proposition~\ref{prop:p2-psi} is $([\begin{smallmatrix}z_0/z_1&0\\-z_2/z_0&z_1/z_0\end{smallmatrix}],[\begin{smallmatrix}0&z_0/z_2\\-z_2/z_0&z_1/z_0\end{smallmatrix}],[\begin{smallmatrix}1&z_0^2/z_1z_2\\0&1\end{smallmatrix}])\in Z'$.
\end{ex}

\begin{proof}
Let $(\cF_1,\Phi_1)\in\Bun_G(\Pp^2;\ell_\infty)$ be determined by the triple $(\tilde{g}_{1}^{0},\tilde{g}_{2}^{0},\tilde{g}_{2}^{1})\in Z'$ defined as in Proposition~\ref{prop:p2-psi}, and let $(\cF_2,\Phi_2)\in\Bun_G(\Pp^1\times\Pp^1;D_\infty)$ be determined by the quadruple $(\tilde{g}_{01}^{00},\tilde{g}_{10}^{00},\tilde{g}_{11}^{01},\tilde{g}_{11}^{10})\in Z$ defined as in Proposition~\ref{prop:psi} (using the same expression $\gamma=q_1 t^\lambda q_2$). It suffices to show that the pull-backs $\pi_1^*(\cF_1,\Phi_1)$ and $\pi_2^*(\cF_2,\Phi_2)$ give the same point of $\Bun_G(X;X\smallsetminus\Aa^2)$. We will only consider the bundles $\pi_1^*\cF_1$ and $\pi_2^*\cF_2$; it is easy to identify the two trivializations on $X\smallsetminus\Aa^2$.

Now $\pi_1^*\cF_1$ can be obtained by gluing trivial $G$-bundles on $\pi_1^{-1}(U_0)$, $\pi_1^{-1}(U_1)$, and $\pi_1^{-1}(U_2)$ using the transition functions $\pi_1^*\tilde{g}_{1}^{0},\pi_1^*\tilde{g}_{2}^{0},\pi_1^*\tilde{g}_{2}^{1}$ on the respective overlaps $\pi_1^{-1}(U_0\cap U_1)$, $\pi_1^{-1}(U_0\cap U_2)$, $\pi_1^{-1}(U_1\cap U_2)$. Note that $\pi_1$ induces an isomorphism $\pi_1^{-1}(U_0\cap U_1)\simto U_0\cap U_1$, and $\pi_1^*\tilde{g}_{1}^{0}$ is the morphism $\pi_1^{-1}(U_0\cap U_1)\to G$ obtained by composing this isomorphism with $\tilde{g}_{1}^{0}$; and similarly for the other overlaps.

We have an analogous description of $\pi_2^*\cF_2$. Since $\tilde{g}_{11}^{01}=1$, $\pi_2^*\cF_2$ is obtained by gluing trivial $G$-bundles on $\pi_2^{-1}(U_{00})$, $\pi_2^{-1}(U_{10})$, and $\pi_2^{-1}(U_{01}\cup U_{11})$ using the transition functions $\pi_2^*\tilde{g}_{10}^{00},\pi_2^*\tilde{g}_{01}^{00},\pi_2^*\tilde{g}_{11}^{10}$ on the respective overlaps 
\[
\begin{split}
&\pi_2^{-1}(U_{00}\cap U_{10}),\\ 
&\pi_2^{-1}(U_{00}\cap(U_{01}\cup U_{11}))=\pi_2^{-1}(U_{00}\cap U_{01}),\\
&\pi_2^{-1}(U_{10}\cap (U_{01}\cup U_{11}))=\pi_2^{-1}(U_{10}\cap U_{11}).
\end{split}
\] 

It now suffices to check that the two collections of transition functions agree on dense subsets of their domains; and since $X$ is irreducible, it suffices to check the agreement on generic points. For a generic point $([z_0:z_1:z_2],t,u)$ of $X$, we have
\[ 
\begin{split}
\pi_1^*\tilde{g}_{1}^{0}([z_0:z_1:z_2],t,u)&=(q_2^{-1}|_{t\mapsto z_1z_2/z_0^2})\,(z_1/z_0)^{-\lambda}\\
&=(q_2^{-1}|_{t\mapsto tu})\,t^{-\lambda}=\pi_2^*\tilde{g}_{10}^{00}([z_0:z_1:z_2],t,u),
\end{split}
\]
and the arguments for the other transition functions are analogous.
\end{proof}


\section{The case $G=\SL(r)$}
\label{sec:glr}


In this section we assume that $G=\SL(r)$ for some integer $r\geq 2$. In the definition~\eqref{eqn:old-def} of $\Bun_G(\Aa^2)$, we can replace the principal $G$-bundle $\cF$ on $\Pp^2$ by a rank-$r$ vector bundle $\cE$ on $\Pp^2$. (Note that we do not need to impose the condition that $\cE$ has trivial determinant bundle, since the existence of the trivialization $\Phi$ on $\ell_\infty$ implies this automatically.) We then have ADHM-style or `monad' descriptions of all our moduli spaces as suitable kinds of Nakajima quiver varieties. We will recall these descriptions and see what they tell us about the varieties $\Gr_0^\lambda$ and $(\Gr_0)^{\iota,\xi}$ in this case. For $\Gr_0^\lambda$, this recovers a known result of Mirkovi\'c and Vybornov~\cite{mvy-cr}, as already observed in~\cite{bf}. The application to the varieties $(\Gr_0)^{\iota,\xi}$ is all that is actually new here, but it is easiest to explain in a more general context.

\subsection{Vector bundles on $\Pp^2$}
\label{ss:1-case}

When $G=\SL(r)$ for $r\geq 2$, the moduli space $\Bun_{G}(\Aa^2)=\Bun_{G}(\Pp^2;\ell_\infty)$ has connected components $\Bun_{G}^n(\Aa^2)$ where $n$ is a nonnegative integer representing the second Chern class of the bundle. As explained in~\cite[Theorem 2.1]{nak3},~\cite[Section 2]{vv} and~\cite[Section 5.1]{bfg}, $\Bun_{G}^n(\Aa^2)$ is isomorphic to the quiver variety $\fM_0^{\mathrm{reg}}(n,r)$ of type $\widetilde{\A}_0$ (that is, corresponding to the quiver with one vertex and one loop). We now recall the definitions of the latter variety and of the isomorphism, mainly following the conventions of~\cite{nak1,nak2,nak3}. 

Set $V=\C^n$ and $W=\C^r$, so that $G=\SL(W)$. Let $\Lambda(V,W)$ be the variety of quadruples $(B_1,B_2,\bi,\bj)$, where
\[
B_1,B_2:V\to V,\quad \bi:W\to V,\quad \bj:V\to W
\]    
are linear maps satisfying the ADHM equation
\begin{equation} \label{eqn:adhm}
[B_1,B_2]+\bi\bj=0.
\end{equation}
We have a natural action of $\GL(V)\times G$ on $\Lambda(V,W)$. 

A quadruple $(B_1,B_2,\bi,\bj)\in\Lambda(V,W)$ is said to be \emph{stable} if $V$ has no nonzero subspace which is preserved by $B_1,B_2$ and is contained in the kernel of $\bj$, and \emph{costable} if $V$ has no proper subspace which is preserved by $B_1,B_2$ and contains the image of $\bi$. Let $\Lambda(V,W)^{s}$ (respectively, $\Lambda(V,W)^{sc}$) be the open subset of $\Lambda(V,W)$ consisting of quadruples that are stable (respectively, stable and costable); these subsets are clearly preserved by the action of $\GL(V)\times G$. 

It follows by the same arguments as in~\cite[Section 3]{nak2} that:
\begin{equation} \label{eqn:stability-freeness}
\text{The action of $\GL(V)$ on $\Lambda(V,W)^{s}$ is free.}
\end{equation}
So there is a geometric quotient variety $\fM(V,W)=\Lambda(V,W)^s/\GL(V)$, whose points are identified with the $\GL(V)$-orbits in $\Lambda(V,W)^s$. We let $\fM_0(V,W)$ denote the affine variety $\Lambda(V,W)/\!/\GL(V)$, whose points are ientified with the closed $\GL(V)$-orbits in $\Lambda(V,W)$. We have an action of $G$ on both $\fM(V,W)$ and $\fM_0(V,W)$, and there is a natural $G$-equivariant projective morphism 
\[ \pi:\fM(V,W)\to\fM_0(V,W), \]
mapping a $\GL(V)$-orbit $\cO$ in $\Lambda(V,W)^s$ to the unique closed $\GL(V)$-orbit in $\overline{\cO}$, where $\overline{\cO}$ is the closure of $\cO$ in $\Lambda(V,W)$.

We define $\fM^{\mathrm{reg}}(V,W)$ to be the $G$-stable open subvariety $\Lambda(V,W)^{sc}/\GL(V)$ of $\fM(V,W)$. The morphism $\pi$ maps $\fM^{\mathrm{reg}}(V,W)$ isomorphically onto an open subvariety $\fM_0^{\mathrm{reg}}(V,W)$ of $\fM_0(V,W)$. In fact, the $\Gamma=\{1\}$ case of~\cite[Lemma 1(ii)]{vv} says that $\Lambda(V,W)^{sc}$ consists exactly of the points in $\Lambda(V,W)^s$ whose $\GL(V)$-orbit is closed in $\Lambda(V,W)$, so the points of either $\fM^{\mathrm{reg}}(V,W)$ or $\fM_0^{\mathrm{reg}}(V,W)$ are identified with the closed $\GL(V)$-orbits in $\Lambda(V,W)$ that lie in $\Lambda(V,W)^s$. The notation $\fM_0^{\mathrm{reg}}(V,W)$ is the one used by Nakajima, but we will mostly refer to $\fM^{\mathrm{reg}}(V,W)$.

The following result, essentially due to Barth~\cite{barth,donaldson}, is part of a similar moduli-space interpretation of the whole of $\fM(V,W)$; see~\cite[Theorem 2.1]{nak3}.
\begin{prop} \label{prop:barth}
We have a $G$-equivariant isomorphism 
\[ 
\Theta:\fM^{\mathrm{reg}}(V,W)\simto\Bun_G^n(\Aa^2)
\] 
which sends the $\GL(V)$-orbit of a quadruple $(B_1,B_2,\bi,\bj)\in\Lambda(V,W)^{sc}$ to the point parametrizing the rank-$r$ vector bundle $\ker(b)/\mathrm{im}(a)$ on $\Pp^2$ where the maps
\[
V\otimes \cO_{\Pp^2}(-1) \overset{a}{\hookrightarrow}
(V\oplus V\oplus W)\otimes \cO_{\Pp^2} \overset{b}{\twoheadrightarrow}
V\otimes \cO_{\Pp^2}(1)
\]
are defined by
\[
a=\begin{bmatrix}z_0B_1-z_1\mathrm{id}_V\\z_0B_2-z_2\mathrm{id}_V\\z_0 \bj \end{bmatrix},\qquad
b=\begin{bmatrix}-(z_0B_2-z_2\mathrm{id}_V) & z_0B_1-z_1\mathrm{id}_V & z_0 \bi \end{bmatrix}.
\]
Here $z_0,z_1,z_2\in\Gamma(\Pp^2,\cO_{\Pp^2}(1))$ are homogeneous coordinates as before.
\end{prop}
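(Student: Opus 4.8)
The plan is to verify that $\Theta$ is a well-defined $G$-equivariant morphism, and then to construct an inverse by recovering the ADHM data from a framed bundle via the Beilinson monad on $\Pp^2$.

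First I would check that the displayed two-term complex is a genuine monad. A direct expansion using the ADHM equation~\eqref{eqn:adhm} gives $b\circ a=z_0^2\bigl([B_1,B_2]+\bi\bj\bigr)=0$ as a map $V\otimes\cO_{\Pp^2}(-1)\to V\otimes\cO_{\Pp^2}(1)$. At a point $[z_0:z_1:z_2]$ with $z_0\neq 0$, injectivity of $a$ is equivalent to costability and surjectivity of $b$ to stability of $(B_1,B_2,\bi,\bj)$ (the standard pointwise linear-algebra arguments, after dividing by $z_0$); on $\ell_\infty=\{z_0=0\}$ the maps degenerate to $\left[\begin{smallmatrix}-z_1\id_V\\-z_2\id_V\\0\end{smallmatrix}\right]$ and $\left[\begin{smallmatrix}z_2\id_V&-z_1\id_V&0\end{smallmatrix}\right]$, which are visibly injective and surjective and whose cohomology is canonically $W\otimes\cO_{\ell_\infty}$. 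Hence $\cE:=\ker(b)/\img(a)$ is a vector bundle; a Chern-class computation gives $\rank\cE=r$ and $c_2(\cE)=\dim V=n$, so $(\cE,\Phi)\in\Bun_G^n(\Aa^2)$ with $\Phi$ the framing just described. An element $g\in\GL(V)$ replacing $(B_1,B_2,\bi,\bj)$ by $(gB_1g^{-1},gB_2g^{-1},g\bi,\bj g^{-1})$ induces an isomorphism of monads acting as $g$ on the two $V$-summands and as the identity on $W$, hence an isomorphism of framed bundles; so $\Theta$ descends to $\fM^{\mathrm{reg}}(V,W)=\Lambda(V,W)^{sc}/\GL(V)$, and the residual action of $G=\SL(W)$ on the $W$-summand of the monad matches the change of framing, giving $G$-equivariance.

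For the inverse, given a framed bundle $(\cE,\Phi)$ of rank $r$ with $c_2=n$, I would recover ADHM data via Beilinson's spectral sequence on $\Pp^2$. One first establishes the vanishings $H^0(\cE(-1))=H^2(\cE(-1))=H^0(\cE(-2))=H^2(\cE(-2))=0$, which follow from the triviality of $\cE$ (and, by Serre duality, of $\cE^\vee$) on $\ell_\infty$ together with the restriction sequences $0\to\cE(k-1)\to\cE(k)\to\cE(k)|_{\ell_\infty}\to 0$. These vanishings collapse the Beilinson spectral sequence into a monad of precisely the displayed form, with $V$ canonically $H^1(\cE(-1))$ (of dimension $n$ by Riemann--Roch) and the $W$-summand of the middle term identified via $\Phi$ with $H^0(\cE|_{\ell_\infty})$; the differentials become linear maps $B_1,B_2,\bi,\bj$, the relation $b\circ a=0$ becomes~\eqref{eqn:adhm}, and local freeness of $\cE$ forces stability and costability. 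A choice of basis of $H^1(\cE(-1))$ then yields a point of $\Lambda(V,W)^{sc}$ well defined up to $\GL(V)$, i.e.\ a point of $\fM^{\mathrm{reg}}(V,W)$, and one checks this assignment is inverse to $\Theta$. Since all the cohomology groups and structure maps involved can be formed in families over an arbitrary base, $\Theta$ and its inverse are morphisms of varieties, so $\Theta$ is an isomorphism.

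The main obstacle is this reconstruction step: proving the requisite cohomology-vanishing statements, checking that the Beilinson monad of a framed bundle really takes the displayed shape with the stated canonical identifications, and --- most delicate --- verifying that the construction is functorial in families, so that it produces an inverse \emph{morphism} rather than merely a set-theoretic inverse. All of this is classical and is carried out in detail in~\cite{barth,donaldson} and~\cite[Theorem 2.1]{nak3}, which in practice I would simply invoke.
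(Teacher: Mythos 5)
Your proposal matches the paper's treatment: the paper likewise only verifies that $b\circ a=0$, that stability and costability give fibrewise injectivity of $a$ and surjectivity of $b$, and that restriction to $\ell_\infty$ yields the canonical framing by $W\otimes\cO_{\ell_\infty}$ (whence $G$-equivariance), and then defers the reconstruction of the ADHM data via the Beilinson monad to \cite{barth,donaldson} and \cite[Theorem 2.1]{nak3}, exactly as you do. One small slip: with the paper's conventions, injectivity of $a$ (whose last block is $z_0\bj$) corresponds to \emph{stability} and surjectivity of $b$ (whose last block is $z_0\bi$) to \emph{costability}, not the other way around --- harmless here since both conditions are assumed.
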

\noindent
Some explanation is required. The equation~\eqref{eqn:adhm} ensures that $b\circ a=0$, so $\ker(b)/\mathrm{im}(a)$ makes sense. The condition that $(B_1,B_2,\bi,\bj)$ is stable and costable is equivalent to saying that $a$ is injective and $b$ is surjective, not just in the category of vector bundles on $\Pp^2$ but on every fibre; for this, combine~\cite[Lemma 2.7]{nak3} with its dual. When we restrict to the line at infinity $\ell_\infty=\Pp^2\smallsetminus\Aa^2$ where $z_0=0$, the maps $a$ and $b$ become
\[
\begin{bmatrix}-z_1\mathrm{id}_V\\-z_2\mathrm{id}_V\\0 \end{bmatrix}\ \text{ and }\ 
\begin{bmatrix}z_2\mathrm{id}_V & -z_1\mathrm{id}_V & 0\end{bmatrix},
\] 
so the restriction of $\ker(b)/\mathrm{im}(a)$ to $\ell_\infty$ is isomorphic in an obvious way to the trivial rank-$r$ vector bundle $W\otimes\cO_{\ell_\infty}$; this is the trivialization used to regard $\ker(b)/\mathrm{im}(a)$ as a point of $\Bun_G(\Aa^2)$. From this the $G$-equivariance of $\Theta$ is obvious. 

On $\Bun_G^n(\Aa^2)$, as we saw in the previous section, we have an action not just of $G$ but of $G\times\GL(2)$, where $[\begin{smallmatrix}\alpha&\beta\\\gamma&\delta\end{smallmatrix}]\in\GL(2)$ acts as the pull-back under the action of $[\begin{smallmatrix}\alpha&\beta\\\gamma&\delta\end{smallmatrix}]^{-1}$ on $\Pp^2$. The corresponding $\GL(2)$-action on $\fM^{\mathrm{reg}}(V,W)$ is described as follows. We have a $\GL(2)$-action on $\Lambda(V,W)$ by the rule
\begin{equation} \label{eqn:gl2-action}
[\begin{smallmatrix}\alpha&\beta\\\gamma&\delta\end{smallmatrix}]\cdot(B_1,B_2,\bi,\bj)
=(\alpha B_1+\gamma B_2,\beta B_1+\delta B_2,(\alpha\delta-\beta\gamma)\bi,\bj).
\end{equation}
More intrinsically, the $\GL(2)$-action is defined by regarding $B_1,B_2$ as the components of a single linear map $B:V\to V\otimes\C^2$, and $\bi$ as a map $W\to V\otimes\Lambda^2(\C^2)$, and then letting $\GL(2)$ act via its action on $\C^2$; this formulation is from~\cite[Section 2]{vv}. 
Since this $\GL(2)$-action on $\Lambda(V,W)$ commutes with the action of $\GL(V)\times G$ and preserves the subset $\Lambda(V,W)^{s}$ (respectively, $\Lambda(V,W)^{sc}$), it induces a $\GL(2)$-action on $\fM(V,W)$ (respectively, $\fM^{\mathrm{reg}}(V,W)$) which commutes with the action of $G$.

\begin{prop} \label{prop:barth-equiv}
The isomorphism $\Theta:\fM^{\mathrm{reg}}(V,W)\simto\Bun_G^n(\Aa^2)$ is $\GL(2)$-equivariant, for the $\GL(2)$-actions described above.
\end{prop}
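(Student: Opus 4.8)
The plan is to prove Proposition~\ref{prop:barth-equiv} by comparing the two $\GL(2)$-actions at the level of the monad description in Proposition~\ref{prop:barth}. Since both sides are honest left actions of $\GL(2)$ and $\Theta$ is already a $G$-equivariant isomorphism, it would in principle suffice to check equivariance on a generating set of $\GL(2)$ (the diagonal torus together with a transposition and an elementary unipotent), but the computation below goes through uniformly for an arbitrary $g\in\GL(2)$.

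First I would make explicit the coordinate change on $\Pp^2$ induced by the action of $g^{-1}$. This action preserves $\ell_\infty=\{z_0=0\}$, fixes $z_0$, and replaces $(z_1,z_2)$ by $A(g)\binom{z_1}{z_2}$ for a matrix $A(g)$ depending linearly on the entries of $g^{-1}$; thus $A(g_1g_2)=A(g_1)A(g_2)$, and $A$ on the diagonal torus is read off from~\eqref{eqn:p2-diag-action}. Writing $(\cE,\Phi)=\Theta(B_1,B_2,\bi,\bj)$ with $\cE=\ker(b)/\mathrm{im}(a)$ as in Proposition~\ref{prop:barth}, the point $g\cdot(\cE,\Phi)=(g^{-1})^*(\cE,\Phi)$ is computed by the pulled-back monad, i.e.\ by the same formulas for $a$ and $b$ with $z_1,z_2$ substituted according to $A(g)$ (and $z_0$ untouched). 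Pullback is exact on vector bundles and $g^{-1}$ is an isomorphism, so the stability and costability of $(B_1,B_2,\bi,\bj)$ guarantee that the substituted maps $\bar a,\bar b$ are still fibrewise injective and surjective.

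The key step is the observation that $\bar a,\bar b$ differ from the monad maps $a',b'$ of the quadruple $(B_1',B_2',\bi',\bj')$ --- where $\binom{B_1'}{B_2'}=A(g)^{-1}\binom{B_1}{B_2}$, $\bi'=(\det A(g))^{-1}\bi$, and $\bj'=\bj$ --- only through the automorphism $\Phi_A:=(A(g)\otimes\id_V)\oplus\id_W$ of the middle bundle $(V\oplus V\oplus W)\otimes\cO_{\Pp^2}$, where the two $V$-summands are identified with $V\otimes\C^2$. Indeed, expanding $z_0B_1-(A(g)_{11}z_1+A(g)_{12}z_2)\id_V$ and the analogous entry as linear combinations of $z_0B_i'-z_i\id_V$ shows at once that $\bar a=\Phi_A\circ a'$; a slightly longer check, in which the determinant $\det A(g)$ emerges from the off-diagonal of the relevant $2\times2$ block of $b$, gives $\bar b\circ\Phi_A=(\det A(g))\,b'$. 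Hence $\Phi_A$ carries $\ker(b')$ onto $\ker(\bar b)$ and $\mathrm{im}(a')$ onto $\mathrm{im}(\bar a)$, so it restricts to an isomorphism of rank-$r$ bundles $\ker(b')/\mathrm{im}(a')\simto\ker(\bar b)/\mathrm{im}(\bar a)$ on $\Pp^2$; and since $\Phi_A$ is the identity on the $W$-summand, and on $\ell_\infty$ (where $z_0=0$, so the $B_i$ and $B_i'$ drop out) it intertwines the standard $W$-trivialization of the $(B',\dots)$-monad with that of the substituted monad --- which is exactly $(g^{-1})^*\Phi$ --- this isomorphism is compatible with the trivializations. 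Therefore $g\cdot\Theta(B_1,B_2,\bi,\bj)=\Theta(B_1',B_2',\bi',\bj')$ in $\Bun_G^n(\Aa^2)$.

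It remains to identify $(B_1',B_2',\bi',\bj')$ with the transformed quadruple $g\cdot(B_1,B_2,\bi,\bj)$ of~\eqref{eqn:gl2-action}. As both assignments are $\GL(2)$-actions it is enough to match them on generators: the diagonal case is immediate from~\eqref{eqn:p2-diag-action}, and the elementary matrices then pin down $A(g)$ --- one finds $A(g)=(g^{-1})^{\top}$, the transpose appearing because substitution in homogeneous coordinates is contravariant --- after which $\binom{B_1'}{B_2'}=A(g)^{-1}\binom{B_1}{B_2}=g^{\top}\binom{B_1}{B_2}$ and $\bi'=\det(g)\bi$ agree precisely with~\eqref{eqn:gl2-action} (the factor $\det g$ being the $\Lambda^2\C^2$-weight recorded there). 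I expect this last bookkeeping --- reconciling the several transposes implicit in the action of $\GL(2)$ on $\Pp^2$, on the coordinates $z_1,z_2$, and on $B\colon V\to V\otimes\C^2$ in~\eqref{eqn:gl2-action} --- to be the only genuine obstacle; the geometric content, namely the monad rearrangement via $\Phi_A$ together with the trivialization check, is routine. An alternative, essentially equivalent, route would be first to rewrite the monad of Proposition~\ref{prop:barth} in the manifestly $\GL(2)$-equivariant form of~\cite[Section~2]{vv}, with middle term $(V\otimes\C^2)\oplus(V\otimes\Lambda^2\C^2)\oplus W$ and structure maps built from $\GL(2)$-intertwiners, after which the $\GL(2)$-equivariance of $\Theta$ becomes immediate.
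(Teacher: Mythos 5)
Your proposal is correct and follows essentially the same route as the paper's own proof: both verify equivariance directly on the monad of Proposition~\ref{prop:barth}, absorbing the coordinate substitution on $\Pp^2$ into a block-matrix automorphism of the middle term $(V\oplus V\oplus W)\otimes\cO_{\Pp^2}$ (your $\Phi_A$ is exactly the paper's matrix $\bigl[\begin{smallmatrix}\alpha&\beta&0\\\gamma&\delta&0\\0&0&1\end{smallmatrix}\bigr]$, with the same determinant factor on $b$) and checking compatibility with the trivialization on $\ell_\infty$. Your write-up is more explicit about the transpose bookkeeping and the identification with~\eqref{eqn:gl2-action}, and your closing remark about the intrinsic $V\otimes\C^2$ formulation mirrors the paper's own aside when it defines the action, so there is nothing to add.
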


\begin{proof}
This is implicit in~\cite{nak3}, and various special cases which are no easier than the full claim were explicitly proved in~\cite[Section 3]{furutahashimoto},~\cite[Theorem 1]{vv} and~\cite[Lemma 4.3]{kumar}. For reassurance we spell out the proof. Suppose that $(B_1,B_2,\bi,\bj)\in\Lambda(V,W)^{sc}$ gives rise to the vector bundle $\ker(b)/\mathrm{im}(a)$ as in Proposition~\ref{prop:barth}. Then by definition, the pull-back under the action of $[\begin{smallmatrix}\alpha&\beta\\\gamma&\delta\end{smallmatrix}]$ on $\Pp^2$ of the vector bundle corresponding to $[\begin{smallmatrix}\alpha&\beta\\\gamma&\delta\end{smallmatrix}]\cdot(B_1,B_2,\bi,\bj)$ is $\ker(b')/\mathrm{im}(a')$, where
\[
a'=\begin{bmatrix}\alpha&\beta&0\\\gamma&\delta&0\\0&0&1\end{bmatrix}a\qquad \text{ and }\qquad
b'=(\alpha\delta-\beta\gamma)\,b\begin{bmatrix}\alpha&\beta&0\\\gamma&\delta&0\\0&0&1\end{bmatrix}^{-1}.
\]
Clearly we have an isomorphism between $\ker(b)/\mathrm{im}(a)$ and $\ker(b')/\mathrm{im}(a')$ respecting the trivializations on $\ell_\infty$.
\end{proof}

As an immediate consequence of Proposition~\ref{prop:barth-equiv}, the isomorphism $\Theta$ of Proposition~\ref{prop:barth} restricts to an isomorphism between fixed-point subvarieties
\begin{equation} \label{eqn:barth-gamma}
\fM^{\mathrm{reg}}(V,W)^\Gamma\simto\Bun_G^n(\Aa^2/\Gamma),
\end{equation}
for any subgroup $\Gamma\subseteq\GL(2)$, where $\Bun_G^n(\Aa^2/\Gamma):=\Bun_G(\Aa^2/\Gamma)\cap\Bun_G^n(\Aa^2)$. (We continue to rely on Proposition~\ref{prop:free} to identify $\Bun_G(\Aa^2/\Gamma)$ with $\Bun_G(\Aa^2)^\Gamma$.) Our aim now is to analyse the left-hand side of~\eqref{eqn:barth-gamma}.

\subsection{Reductive subgroups of $\SL(2)$}
\label{ss:subgroups}
  
We need to digress briefly to recall some aspects of the McKay correspondence, and how they extend from the well-known case of finite subgroups of $\SL(2)$ to the (slightly) more general case of reductive subgroups of $\SL(2)$.

Let $\Gamma$ be a reductive subgroup of $\SL(2)$. Then $\Gamma$ is either finite (hence cyclic or binary di-, tetra-, octa- or icosahedral), conjugate to the diagonal $\Gg_m$ or to $N=N_{\SL(2)}(\Gg_m)$, or equal to $\SL(2)$ itself. 

We define the \emph{doubled McKay quiver} of $\Gamma$ as follows: its vertex set $I_\Gamma$ is in bijection with the isomorphism classes of irreducible representations $\{S_i\,|\,i\in I_\Gamma\}$ of $\Gamma$, and the number of directed edges from vertex $i$ to vertex $j$ is the multiplicity $\dim \Hom_\Gamma(S_i\otimes\C^2,S_j)$, where $\Gamma$ acts on $\C^2$ via the embedding $\Gamma\hookrightarrow\SL(2)$. Since $\C^2$ is a self-dual representation of $\Gamma$, the number of edges from $j$ to $i$ equals that from $i$ to $j$. Moreover, when $\Gamma\neq\{1\}$ we have no edges from a vertex to itself. Hence the doubled McKay quiver is always obtained from an undirected graph, the \emph{McKay graph}, by replacing each undirected edge with two directed eges in opposite directions. The McKay graph is a simple graph unless $|\Gamma|\in\{1,2\}$. In fact:
\begin{itemize}
\item When $\Gamma=\{1\}$, the McKay graph is the `affine Dynkin diagram of type $\widetilde{\A}_0$', i.e.\ it has a single vertex and a single edge which is a loop joining that vertex to itself.
\item (The \emph{McKay correspondence} of~\cite{mckay}.) When $\Gamma$ is cyclic of order $k+1$ for some $k\geq 1$, respectively binary dihedral of order $4(k-2)$ for some $k\geq 4$, respectively binary tetrahedral, binary octahedral or binary icosahedral, the McKay graph is the affine Dynkin diagram of type $\widetilde{\A}_k$, respectively $\widetilde{\D}_k$, respectively $\widetilde{\E}_6$, $\widetilde{\E}_7$ or $\widetilde{\E}_8$. (The affine Dynkin diagram of type $\widetilde{\A}_1$ has two vertices and two edges joining them.) One says that $\Gamma$ itself is of type $\A_k$, respectively $\D_k$, respectively $\E_6$, $\E_7$ or $\E_8$.
\item When $\Gamma=\Gg_m$, the McKay graph is the Dynkin diagram of type $\A_\infty$ (infinite in both directions), with vertex set $\Z$. Here $S_i$, $i\in\Z$, is the one-dimensional representation $z\mapsto z^i$.
\item When $\Gamma=N$, the McKay graph is the Dynkin diagram of type $\D_\infty$, with vertex set $\{(0,+),(0,-)\}\cup\Z^+$ and edges as follows:
\[
\vcenter{
\xymatrix@R=2ex{
(0,+)\ar@{-}[dr]\\
&1\ar@{-}[r]&2\ar@{-}[r]&3\ar@{-}[r]&\cdot\cdots\cdot\\
(0,-)\ar@{-}[ur]
}
}
\]
Here $S_{0,+}$ is the trivial representation, $S_{0,-}$ is the nontrivial one-dimensional representation (in which $\Gg_m$ acts trivially and $[\begin{smallmatrix}0&1\\-1&0\end{smallmatrix}]$ acts by $-1$), and $S_i$ for $i>0$ is the two-dimensional irreducible representation whose restriction to $\Gg_m$ is the direct sum of the representations labelled by $i$ and $-i$.
\item When $\Gamma=\SL(2)$, the McKay graph is the Dynkin diagram of type $\A_{+\infty}$ (infinite in one direction), with vertex set $\N$. Here $S_i$, $i\in\N$, is the $(i+1)$-dimensional irreducible representation.   
\end{itemize} 

\begin{rmk}
It is an observation going back to Slodowy~\cite[Section 6.2, Appendix III]{slod} that certain inclusions of finite subgroups of $\SL(2)$ induce particularly nice relationships between their McKay graphs. For example, in the terminology introduced by the author and A.~Licata in~\cite{hl}, the fact that a binary dihedral group of order $4(k-2)$ contains an index-$2$ cyclic subgroup of order $2(k-2)$ is reflected in the fact that the affine Dynkin diagram of type $\widetilde{\D}_k$ can be obtained by applying the \emph{split-quotient} construction to a suitable graph involution of the affine Dynkin diagram of type $\widetilde{\A}_{2k-5}$; see~\cite[Example 2.7]{hl}. The same principles also apply to the infinite reductive subgroups of $\SL(2)$: the fact that $N$ contains $\Gg_m$ as an index-$2$ subgroup is reflected in the fact that the diagram of type $\D_\infty$ can be obtained by applying the split-quotient construction to the involution $i\mapsto -i$ of the diagram of type $\A_\infty$ (infinite in both directions).
\end{rmk} 

In the discussion of quiver varieties that follows, we will need to use certain results that were originally stated only for finite subgroups $\Gamma$ of $\SL(2)$ (for example in~\cite{hl,nak1,nak2,vv}). The relevant results all extend to the case of infinite reductive subgroups $\Gamma$ of $\SL(2)$ with the same proof. 
  
\subsection{Quiver varieties}
\label{ss:quiver-varieties}

Continue to let $\Gamma$ be a reductive subgroup of $\SL(2)$. Suppose that we are given a representation $\rho:\Gamma\to\GL(V)$ of $\Gamma$ on $V=\C^n$. We could also consider a representation of $\Gamma$ on $W=\C^r$, but for simplicity we assume that this representation is trivial, which agrees with the $\mu=0$ case we have considered elsewhere in the paper (see Remark~\ref{rmk:slice}).

Let $i:\Gamma\hookrightarrow\GL(2)$ be the inclusion. Then $\Gamma$ acts on $\Lambda(V,W)$ via the homomorphism $(\rho,i):\Gamma\to\GL(V)\times\GL(2)$, and we can consider the fixed-point subvariety $\Lambda(V,W)^{\Gamma,\rho}$. This carries an action of $\GL_{\Gamma,\rho}(V)\times G$, where $\GL_{\Gamma,\rho}(V)\subseteq\GL(V)$ is the automorphism group of the representation $\rho$. Let $\Lambda(V,W)^{s,\Gamma,\rho}$, $\Lambda(V,W)^{sc,\Gamma,\rho}$ denote the intersections of $\Lambda(V,W)^{\Gamma,\rho}$ with $\Lambda(V,W)^{s}$, $\Lambda(V,W)^{sc}$. We define
\[
\begin{split}
\fM^{\Gamma,\rho}(V,W)&:=\Lambda(V,W)^{s,\Gamma,\rho}/\GL_{\Gamma,\rho}(V),\\  
\fM^{\Gamma,\rho,\mathrm{reg}}(V,W)&:=\Lambda(V,W)^{sc,\Gamma,\rho}/\GL_{\Gamma,\rho}(V).
\end{split}
\]
Then $\fM^{\Gamma,\rho,\mathrm{reg}}(V,W)$ is an open subvariety of $\fM^{\Gamma,\rho}(V,W)$; for general $\rho$, either the former variety or both varieties could be empty (that is, $\Lambda(V,W)^{\Gamma,\rho}$ could have no intersection with $\Lambda(V,W)^{sc}$ or even with $\Lambda(V,W)^{s}$). 

One can also define $\fM_0^{\Gamma,\rho}(V,W):=\Lambda(V,W)^{\Gamma,\rho}/\GL_{\Gamma,\rho}(V)$ and a projective morphism $\pi:\fM^{\Gamma,\rho}(V,W)\to \fM_0^{\Gamma,\rho}(V,W)$, in the same way as in the $\Gamma=\{1\}$ case which was considered in \S\ref{ss:1-case}. The comments in \S\ref{ss:1-case} about $\fM^{\mathrm{reg}}(V,W)$ versus $\fM_0^{\mathrm{reg}}(V,W)$ apply equally well to $\fM^{\Gamma,\rho,\mathrm{reg}}(V,W)$.

The varieties $\fM^{\Gamma,\rho}(V,W)$ are examples of \emph{Nakajima quiver varieties}; for short, we will refer to the varieties $\fM^{\Gamma,\rho,\mathrm{reg}}(V,W)$ as quiver varieties also.
As Nakajima observed in~\cite{nak1}, by splitting up the representation $(V,\rho)$ into its isotypic components, one can regard the $B$ components of a point $(B_1,B_2,\bi,\bj)\in\Lambda(V,W)^{\Gamma,\rho}$ as a configuration of linear maps $(B_h)$ assigned to the edges $h$ of the doubled McKay quiver of $\Gamma$, and an element of $\GL_{\Gamma,\rho}(V)$ as a tuple of invertible linear transformations assigned to the vertices of the McKay graph of $\Gamma$. Thus the above definition of $\fM^{\Gamma,\rho}(V,W)$, which comes from~\cite[Section 2]{vv}, becomes a special case of the usual definition of quiver variety from~\cite{nak1,nak2}. We will make the latter definition explicit in the $\Gamma=\Gg_m$ and $\Gamma=N$ cases in \S\ref{ss:Gm-case} and \S\ref{ss:N-case} respectively.     

The following is a special case of a general result about quiver varieties, due to Nakajima and Crawley-Boevey:

\begin{prop} \label{prop:connected}
Let $\rho:\Gamma\to\GL(V)$ be a representation. If $\fM^{\Gamma,\rho}(V,W)$ is nonempty, it is nonsingular and connected. The same holds for $\fM^{\Gamma,\rho,\mathrm{reg}}(V,W)$.
\end{prop}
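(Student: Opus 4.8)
The plan is to reduce the statement to a known result about Nakajima quiver varieties. The key point, recalled in \S\ref{ss:quiver-varieties}, is that after decomposing the representation $(V,\rho)$ into its $\Gamma$-isotypic components and using the McKay correspondence, the fixed-point variety $\Lambda(V,W)^{\Gamma,\rho}$ becomes the space of representations of the doubled McKay quiver $Q_\Gamma$ of $\Gamma$ with dimension vector $\bv$ recording the multiplicities of the irreducibles $S_i$ in $V$ (and $\bw$ recording those in $W$, which here is zero away from the trivial-representation vertices since $W$ carries the trivial action), subject to the ADHM relation, which becomes the standard moment-map equation $\mu(\bv)=0$ for the preprojective algebra. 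Under this identification $\GL_{\Gamma,\rho}(V)$ becomes $\prod_i \GL(V_i)$, the stability condition becomes Nakajima's stability condition, and $\fM^{\Gamma,\rho}(V,W)$ becomes the Nakajima quiver variety $\fM_\theta(\bv,\bw)$ for the generic stability parameter $\theta$, while $\fM^{\Gamma,\rho,\mathrm{reg}}(V,W)$ becomes its open locus $\fM_0^{\mathrm{reg}}(\bv,\bw)$ of closed free orbits.

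First I would make this dictionary precise, citing~\cite{nak1,nak2} (and~\cite{vv} for the isotypic-decomposition reformulation), and noting, as indicated at the end of \S\ref{ss:subgroups}, that although these references are written for \emph{finite} $\Gamma\subset\SL(2)$, the arguments apply verbatim when $\Gamma$ is one of the infinite reductive subgroups $\Gg_m$ or $N$, since the only change is that the underlying graph ($\A_\infty$ or $\D_\infty$) is infinite while $\bv,\bw$ remain finitely supported — all the relevant linear algebra takes place on the finite subquiver supporting $\bv$ and $\bw$. Next I would invoke the smoothness: for generic $\theta$ the $\GL(V)$-action (equivalently $\prod_i\GL(V_i)$-action) on the $\theta$-stable locus is free, and $\fM_\theta(\bv,\bw)$ is a smooth symplectic variety — this is~\cite[Corollary 3.12]{nak2}; smoothness of $\fM^{\Gamma,\rho,\mathrm{reg}}(V,W)$ follows since it is an open subvariety. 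Finally, for connectedness I would cite the theorem of Crawley-Boevey (building on Nakajima) that $\fM_\theta(\bv,\bw)$ is connected whenever it is nonempty; and for the ``regular'' variety one observes that $\fM^{\Gamma,\rho,\mathrm{reg}}(V,W)$, when nonempty, is a nonempty open subset of the irreducible variety $\fM^{\Gamma,\rho}(V,W)$ (irreducible because smooth and connected), hence itself connected.

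The main obstacle I anticipate is not any one deep step but the bookkeeping needed to check that the cited results — stated in the literature for finite $\Gamma$ and for the ``framed'' quiver varieties with Nakajima's conventions — really do transfer to the present setup with $\Gamma\in\{\Gg_m,N\}$ and with the specific stability condition coming from the ADHM ``stable and costable'' description rather than a GIT parameter. In particular one must confirm that the stability condition of \S\ref{ss:1-case} matches a generic $\theta$ after passing to fixed points (so that freeness and smoothness apply), and that Crawley-Boevey's connectedness argument, which is essentially a dimension count and a deformation to generic parameter on the finite supporting subquiver, is insensitive to the ambient graph being infinite. Once these compatibility checks are in place, the proposition is immediate.
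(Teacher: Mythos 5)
Your proposal is correct and follows essentially the same route as the paper: identify $\fM^{\Gamma,\rho}(V,W)$ as a Nakajima quiver variety, cite~\cite[Corollary 3.12]{nak2} for nonsingularity and Crawley-Boevey~\cite{c-b} for connectedness, and handle $\fM^{\Gamma,\rho,\mathrm{reg}}(V,W)$ as a nonempty open subvariety of a smooth connected (hence irreducible) variety. The compatibility checks you flag (the McKay-quiver dictionary and the extension to the infinite reductive subgroups $\Gg_m$ and $N$) are exactly the points the paper disposes of in \S\ref{ss:subgroups} and \S\ref{ss:quiver-varieties} before stating the proposition.
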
 

\begin{proof}
Since $\fM^{\Gamma,\rho,\mathrm{reg}}(V,W)$ is an open subvariety of $\fM^{\Gamma,\rho}(V,W)$, it suffices to prove the statements about $\fM^{\Gamma,\rho}(V,W)$. The nonsingularity statement is~\cite[Corollary 3.12]{nak2}, and the connectedness statement is proved in~\cite[Section 1]{c-b}. 
\end{proof}

Because of the isomorphism~\eqref{eqn:barth-gamma}, we are interested in the fixed-point subvariety $\fM(V,W)^\Gamma$, whose definition does not involve $\rho$ (only the inclusion $i:\Gamma\hookrightarrow\GL(2)$). The following result was implicit in~\cite{nak1,vv} and appeared explicitly, but without any details of the proof, as~\cite[equation (3.40)]{hl}.

\begin{prop} \label{prop:disconnection-full}
The inclusion maps $\Lambda(V,W)^{s,\Gamma,\rho}\hookrightarrow\Lambda(V,W)^s$ induce a $G$-equivariant isomorphism
\[
\coprod_{\rho}\, \fM^{\Gamma,\rho}(V,W) \simto \fM(V,W)^\Gamma,
\]
where the domain is a disconnected union over the finite set of equivalence classes of representations $\rho:\Gamma\to\GL(V)$. Hence the nonempty varieties $\fM^{\Gamma,\rho}(V,W)$ constitute the connected components of $\fM(V,W)^\Gamma$.
\end{prop}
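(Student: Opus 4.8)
The plan is to write the map down explicitly, check it is a $G$-equivariant bijective morphism, and then upgrade bijectivity to an isomorphism using smoothness of both sides. For each representation $\rho:\Gamma\to\GL(V)$, the inclusion $\Lambda(V,W)^{s,\Gamma,\rho}\hookrightarrow\Lambda(V,W)^s$ is equivariant for the action of the subgroup $\GL_{\Gamma,\rho}(V)\subseteq\GL(V)$, so it descends to a morphism $\fM^{\Gamma,\rho}(V,W)\to\fM(V,W)$; and since for $x\in\Lambda(V,W)^{\Gamma,\rho}$ the point $i(\gamma)\cdot x$ lies in the $\GL(V)$-orbit of $x$ for every $\gamma\in\Gamma$, the image $[x]$ is fixed by the $\Gamma$-action on $\fM(V,W)$ coming from $i:\Gamma\hookrightarrow\GL(2)$. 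Assembling these morphisms over all $\rho$ yields the asserted map $\coprod_\rho\fM^{\Gamma,\rho}(V,W)\to\fM(V,W)^\Gamma$. Its $G$-equivariance is immediate, since the $G$-action on $\Lambda(V,W)$ commutes with the actions of $\GL(V)$ and of $\GL(2)$ and hence preserves each $\Lambda(V,W)^{s,\Gamma,\rho}$.

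The heart of the argument is surjectivity, which is where the freeness of the $\GL(V)$-action on $\Lambda(V,W)^s$ from~\eqref{eqn:stability-freeness} is indispensable. Given $[x]\in\fM(V,W)^\Gamma$ with $x\in\Lambda(V,W)^s$, the point $i(\gamma)\cdot x$ lies in the $\GL(V)$-orbit of $x$ for each $\gamma\in\Gamma$, so by freeness there is a unique $\rho_x(\gamma)\in\GL(V)$ carrying $i(\gamma)\cdot x$ back to $x$. Using commutativity of the $\GL(V)$- and $\GL(2)$-actions one checks that $\gamma\mapsto\rho_x(\gamma)$ is a group homomorphism, and it is a morphism of algebraic groups because the orbit map $\GL(V)\to\GL(V)\cdot x$ is an isomorphism for the free action, so that $\rho_x$ is obtained by post-composing $\gamma\mapsto i(\gamma)\cdot x$ with its inverse. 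Then $x\in\Lambda(V,W)^{s,\Gamma,\rho_x}$, so $[x]$ lies in the image of $\fM^{\Gamma,\rho_x}(V,W)$. For injectivity, suppose $x\in\Lambda(V,W)^{s,\Gamma,\rho}$ and $x'=g\cdot x\in\Lambda(V,W)^{s,\Gamma,\rho'}$ with $g\in\GL(V)$; substituting $x'=g\cdot x$ into the two fixed-point conditions and using freeness again gives $\rho'=g\rho g^{-1}$. Since the union in the statement runs over equivalence classes of representations, this forces $\rho=\rho'$, hence $g\in\GL_{\Gamma,\rho}(V)$ and $x,x'$ define the same point of $\fM^{\Gamma,\rho}(V,W)$. (The same computation shows that $\fM^{\Gamma,\rho}(V,W)$ and $\fM^{\Gamma,\rho'}(V,W)$ already have identical images in $\fM(V,W)$ whenever $\rho\sim\rho'$, which is why passing to equivalence classes is the correct normalization.)

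It remains to upgrade this $G$-equivariant bijective morphism to an isomorphism of varieties. The source is smooth by Proposition~\ref{prop:connected} applied to each $\rho$, and the target is smooth because $\fM(V,W)$ is smooth — the $\Gamma=\{1\}$ case of Proposition~\ref{prop:connected} — and the fixed-point locus of the reductive group $\Gamma$ acting on a smooth complex variety is smooth. A bijective morphism between smooth complex varieties is birational on each irreducible component, and hence an isomorphism by Zariski's main theorem. Finally, Proposition~\ref{prop:connected} also says that each nonempty $\fM^{\Gamma,\rho}(V,W)$ is connected; since $\coprod$ denotes a genuine disconnected union, the nonempty $\fM^{\Gamma,\rho}(V,W)$ are exactly the connected components of $\fM(V,W)^\Gamma$, and, $\fM(V,W)^\Gamma$ being of finite type, only finitely many of them are nonempty. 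The step I expect to be the main obstacle is surjectivity: the whole proposition reduces to extracting the representation $\rho_x$ from a $\Gamma$-fixed point and recognizing it as an algebraic homomorphism, which is exactly what the freeness in~\eqref{eqn:stability-freeness} makes possible; by contrast, the passage from bijection to isomorphism is routine once the target is seen to be smooth.
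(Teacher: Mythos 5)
Your construction of the map, the verification of $G$-equivariance, and the proof of bijectivity via the freeness statement~\eqref{eqn:stability-freeness} (extracting $\rho_x$ from a $\Gamma$-fixed point, checking it is an algebraic homomorphism, and using freeness again for injectivity) all match the paper's intended argument, which proceeds in exactly these steps.

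However, the final step contains a genuine gap. The claim that ``a bijective morphism between smooth complex varieties is birational on each irreducible component, and hence an isomorphism by Zariski's main theorem'' is false when the source is disconnected: the bijective morphism $(\Aa^1\smallsetminus\{0\})\sqcup\{\mathrm{pt}\}\to\Aa^1$ has smooth source and target but is not an isomorphism, because one component of the source fails to map birationally onto a component of the target. In your setting nothing you have proved rules out the analogous pathology, namely that some $\fM^{\Gamma,\rho}(V,W)$ maps onto a non-closed (or non-open) locally closed subset of a connected component of $\fM(V,W)^\Gamma$, with other pieces filling in the complement. Your subsequent sentence deducing that the images are the connected components ``since $\coprod$ denotes a genuine disconnected union'' is therefore circular: the assertion that the decomposition of $\fM(V,W)^\Gamma$ by representation type is a disconnected union is precisely what must be proved, and it is not a formal consequence of bijectivity plus smoothness.

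The missing ingredient, which the paper supplies, is that the equivalence class of $\rho_x$ is \emph{locally constant} on $\fM(V,W)^\Gamma$, because the reductive group $\Gamma$ has a discrete classification of representations (complete reducibility plus the discreteness of its set of irreducibles); representations of $\Gamma$ on the fixed vector space $V$ do not deform. This shows directly that each subset $\{[x]\in\fM(V,W)^\Gamma : \rho_x\sim\rho\}$ is open and closed, so each $\fM^{\Gamma,\rho}(V,W)$ maps bijectively onto a union of connected components of the smooth variety $\fM(V,W)^\Gamma$; only \emph{then} does the characteristic-zero argument (bijective morphism between irreducible smooth varieties is birational, hence an isomorphism by Zariski's main theorem) apply componentwise to finish the proof. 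In short: disconnectedness must be established before, not deduced after, the upgrade from bijection to isomorphism.
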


\noindent
As we mentioned in~\cite{hl}, one can give an `elementary' proof of Proposition~\ref{prop:disconnection-full}, which is analogous to (but simpler than) the proof of~\cite[Theorem 3.9]{hl}. The first step, analogous to~\cite[Lemma 3.17]{hl}, is to note that the inclusion $\Lambda(V,W)^{s,\Gamma,\rho}\hookrightarrow\Lambda(V,W)^s$ does indeed induce an injection $\fM^{\Gamma,\rho}(V,W)\hookrightarrow\fM(V,W)^\Gamma$, which follows immediately from~\eqref{eqn:stability-freeness}. The next step, analogous to~\cite[Lemma 3.18]{hl}, is to show that every point of $\fM(V,W)^\Gamma$ belongs to the image of $\fM^{\Gamma,\rho}(V,W)\hookrightarrow\fM(V,W)^\Gamma$ for a unique $\rho$ (up to equivalence). Then the rest of the argument is essentially the same as in the proof of~\cite[Theorem 3.9]{hl}: the union is disconnected because the reductive group $\Gamma$ has a discrete classification of representations, and the bijection must be an isomorphism because $\fM(V,W)$ is nonsingular, so every connected component of $\fM(V,W)^\Gamma$ is also nonsingular.

However, as we also mentioned in~\cite{hl}, it is more enlightening to interpret Proposition~\ref{prop:disconnection-full} using moduli spaces. Since~\eqref{eqn:barth-gamma} involved $\fM^{\mathrm{reg}}(V,W)^\Gamma$ rather than the whole of $\fM(V,W)^\Gamma$, we will restrict ourselves to recalling the moduli-space interpretation of the version of Proposition~\ref{prop:disconnection-full} concerning $\fM^{\mathrm{reg}}(V,W)^\Gamma$ (which would be a corollary of the full statement Proposition~\ref{prop:disconnection-full}).

\subsection{$\Gamma$-equivariant vector bundles}
\label{ss:gamma-equiv}

If $(\cE,\Phi)$ is a pair in $\Bun_G(\Aa^2)$, then the second Chern class of $\cE$ is $\dim H^1(\Pp^2,\cE(-\ell_\infty))$. Moreover, the proof of Proposition~\ref{prop:barth} in~\cite[Chapter 2]{nak3} identifies $H^1(\Pp^2,\cE(-\ell_\infty))$ with the vector space $V=\C^n$ in the definition of $\fM(V,W)$. Hence we have a disconnected union
\begin{equation} \label{eqn:disconnection}
\Bun_G^n(\Aa^2/\Gamma)=\coprod_\rho\, \Bun_G^\rho(\Aa^2/\Gamma),
\end{equation} 
where the union is over equivalence classes of representations $\rho:\Gamma\to\GL(V)$. Namely, $\Bun_G^\rho(\Aa^2/\Gamma)$ is the subvariety of $\Bun_G^n(\Aa^2/\Gamma)$ parametrizing $\Gamma$-equivariant pairs $(\cE,\Phi)$ where the induced representation of $\Gamma$ on $V\cong H^1(\Pp^2,\cE(-\ell_\infty))$ is equivalent to $\rho$. As in the above sketched proof of Proposition~\ref{prop:disconnection-full}, the union must be disconnected because $\Gamma$ is reductive.

The following result of Varagnolo and Vasserot is part of~\cite[Theorem 1]{vv}.

\begin{prop} \label{prop:barth-rho}
For each $\rho:\Gamma\to\GL(V)$, the isomorphism $\Theta$ of Proposition~\ref{prop:barth} restricts to an isomorphism 
\[ \fM^{\Gamma,\rho,\mathrm{reg}}(V,W)\simto \Bun_G^\rho(\Aa^2/\Gamma). \]
Here we use the identification of $\fM^{\Gamma,\rho,\mathrm{reg}}(V,W)$ with a subvariety of $\fM^{\mathrm{reg}}(V,W)$ induced by the inclusion map $\Lambda(V,W)^{sc,\Gamma,\rho}\hookrightarrow\Lambda(V,W)^{sc}$. As a consequence, $\Bun_G^\rho(\Aa^2/\Gamma)$ is connected if it is nonempty.
\end{prop}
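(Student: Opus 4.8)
The plan is to bootstrap from the $\GL(2)$-equivariance of $\Theta$ that is already available. By Proposition~\ref{prop:barth-equiv}, or rather its consequence \eqref{eqn:barth-gamma}, $\Theta$ restricts to a $G$-equivariant isomorphism of varieties $\fM^{\mathrm{reg}}(V,W)^\Gamma\simto\Bun_G^n(\Aa^2/\Gamma)$. Both sides carry disconnected-union decompositions indexed by the (finitely many) equivalence classes of representations $\rho:\Gamma\to\GL(V)$: on the source, the regular-locus version of Proposition~\ref{prop:disconnection-full} gives $\fM^{\mathrm{reg}}(V,W)^\Gamma=\coprod_\rho\fM^{\Gamma,\rho,\mathrm{reg}}(V,W)$; on the target, \eqref{eqn:disconnection} gives $\Bun_G^n(\Aa^2/\Gamma)=\coprod_\rho\Bun_G^\rho(\Aa^2/\Gamma)$. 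Since these are partitions into clopen subvarieties indexed by the same set and $\Theta$ is a variety isomorphism, the whole content of the proposition is that $\Theta$ sends the $\rho$-summand of the source into the $\rho$-summand of the target for each $\rho$; this then forces a piecewise isomorphism.

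To verify the matching of labels it suffices to compare, for a single $\Gamma$-fixed point, the representation attached to it on each side. On the source, the $\GL(V)$-orbit of a stable and costable quadruple $(B_1,B_2,\bi,\bj)$ fixed by $\Gamma$ acting via $(\rho,i)$ lies in $\fM^{\Gamma,\rho,\mathrm{reg}}(V,W)$ precisely because $\Gamma$ acts on $V$ through $\rho$. On the target, $(\cE,\Phi)=\Theta(\text{orbit})$ lies in $\Bun_G^\rho(\Aa^2/\Gamma)$ precisely when $\Gamma$ acts on $H^1(\Pp^2,\cE(-\ell_\infty))$ through $\rho$. So what is needed is that the identification $V\cong H^1(\Pp^2,\cE(-\ell_\infty))$ used in \cite[Chapter~2]{nak3} to prove Proposition~\ref{prop:barth}, and already invoked just before \eqref{eqn:disconnection}, is $\Gamma$-equivariant. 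This is the only real work. It is a matter of tracking the naturality of Barth's monad construction: the $\Gamma$-fixedness of $(B_1,B_2,\bi,\bj)$ says that for each $\delta\in\Gamma$ the automorphism $\rho(\delta)$ of $V$ and the action of $i(\delta)$ on $\Pp^2$ together constitute an isomorphism of the monad $V\otimes\cO_{\Pp^2}(-1)\overset{a}{\hookrightarrow}(V\oplus V\oplus W)\otimes\cO_{\Pp^2}\overset{b}{\twoheadrightarrow}V\otimes\cO_{\Pp^2}(1)$ with its $i(\delta)$-pullback (identity on the $W$-summand), and this is exactly the $\Gamma$-equivariant structure on $\cE=\ker(b)/\mathrm{im}(a)$; applying $H^1(\Pp^2,-\otimes\cO_{\Pp^2}(-\ell_\infty))$ and Nakajima's canonical identification of this cohomology group with the $V$-summand of the middle term then shows that the resulting $\Gamma$-action on $H^1(\Pp^2,\cE(-\ell_\infty))$ is $\rho$. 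Hence the two labelings agree and $\Theta$ restricts to the asserted isomorphism $\fM^{\Gamma,\rho,\mathrm{reg}}(V,W)\simto\Bun_G^\rho(\Aa^2/\Gamma)$ for every $\rho$.

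The connectedness assertion is then immediate: if $\Bun_G^\rho(\Aa^2/\Gamma)$ is nonempty, so is $\fM^{\Gamma,\rho,\mathrm{reg}}(V,W)$, which is connected by Proposition~\ref{prop:connected}, and connectedness transfers across $\Theta$. As indicated, the main obstacle is the $\Gamma$-equivariance of the identification $V\cong H^1(\Pp^2,\cE(-\ell_\infty))$; since this identification is already a standard ingredient of the monad description of $\Bun_G^n(\Aa^2)$ and is stated in \cite{nak3}, the task is really to confirm its compatibility with the two $\Gamma$-actions rather than to establish anything genuinely new, which is why the whole result can reasonably be attributed, as it is here, to \cite{vv}.
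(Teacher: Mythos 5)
Your argument is correct, but it takes a genuinely different route from the paper, which in fact offers no proof of this statement at all: Proposition~\ref{prop:barth-rho} is simply quoted as part of \cite[Theorem 1]{vv}, and the paper then uses it together with~\eqref{eqn:disconnection} to \emph{deduce}~\eqref{eqn:reg-isom}. You run that dependency in the opposite direction: you take the regular-locus corollary of Proposition~\ref{prop:disconnection-full} as the source-side decomposition and reduce everything to matching the labels $\rho$ across $\Theta$. This is non-circular only because Proposition~\ref{prop:disconnection-full} has the independent ``elementary'' proof sketched in \S\ref{ss:quiver-varieties} (freeness of the $\GL(V)$-action on the stable locus, discreteness of the representation theory of the reductive group $\Gamma$, smoothness of $\fM(V,W)$); had you justified it instead by its moduli-space interpretation, you would be assuming what is to be proved, so it is worth flagging that you are relying on the elementary route. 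Granting that, your reduction is clean: both sides are partitioned into clopen pieces indexed by the same set, $\Theta$ is already an isomorphism on the $\Gamma$-fixed loci by~\eqref{eqn:barth-gamma}, and the inclusions $\Theta(\fM^{\Gamma,\rho,\mathrm{reg}}(V,W))\subseteq\Bun_G^\rho(\Aa^2/\Gamma)$ then force piecewise equality. The one substantive point --- which is precisely what \cite{vv} establish --- is the $\Gamma$-equivariance of the canonical identification $V\cong H^1(\Pp^2,\cE(-\ell_\infty))$, i.e.\ the naturality of Barth's monad construction with respect to the simultaneous action of $\rho(\delta)$ on $V$ and $i(\delta)$ on $\Pp^2$; you correctly isolate this as the only real work, though your verification of it remains at the level of a sketch rather than a checked cohomological computation. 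The connectedness assertion via Proposition~\ref{prop:connected} is handled exactly as in the paper.
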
 

Combining~\eqref{eqn:disconnection} and Proposition~\ref{prop:barth-rho}, we get the desired variant of Proposition~\ref{prop:disconnection-full}: 
\begin{equation} \label{eqn:reg-isom}
\fM^{\mathrm{reg}}(V,W)^\Gamma=
\coprod_{\rho}\, \fM^{\Gamma,\rho,\mathrm{reg}}(V,W).
\end{equation}
On the other hand, combining Proposition~\ref{prop:connected} and Proposition~\ref{prop:barth-rho}, we see that in the disconnected union~\eqref{eqn:disconnection}, the nonempty varieties $\Bun_G^\rho(\Aa^2/\Gamma)$ on the right-hand side are exactly the connected components of $\Bun_G^n(\Aa^2/\Gamma)$. Hence the nonempty varieties $\Bun_G^\rho(\Aa^2/\Gamma)$, as not just $\rho$ but also $n$ varies, are the connected components of $\Bun_G(\Aa^2/\Gamma)$.  

As we have already observed in the $\Gamma=\Gg_m$ and $\Gamma=N$ cases (in \S\ref{ss:proof} and \S\ref{ss:n-equiv} respectively), the moduli space $\Bun_G(\Aa^2/\Gamma)$ has another natural decomposition as a disconnected union, where one considers the action of $\Gamma$ on the fibre of the vector bundle at the $\Gamma$-fixed point $[1:0:0]\in\Aa^2$:
\begin{equation} \label{eqn:disconnection2}
\Bun_G(\Aa^2/\Gamma)=\coprod_\tau\, \Bun_G^\tau(\Aa^2/\Gamma).
\end{equation}
Here the union is over $G$-conjugacy classes of homomorphisms $\tau:\Gamma\to G$, i.e.\ equivalence classes of determinant-$1$ representations of $\Gamma$ on $W$ (\emph{not} on $V$ as above). The relationship between~\eqref{eqn:disconnection} and~\eqref{eqn:disconnection2} is pinned down by the next result, which is due to Nakajima.

\begin{prop} \label{prop:fundamental}
Let $\rho:\Gamma\to\GL(V)$ be a representation.
\begin{enumerate}
\item The variety $\fM^{\Gamma,\rho,\mathrm{reg}}(V,W)\cong\Bun_G^\rho(\Aa^2/\Gamma)$ is nonempty if and only if there is a homomorphism $\tau:\Gamma\to G$ such that we have the following equivalence of representations of $\Gamma$:
\begin{equation} \label{eqn:fundamental}
(V,\rho)\oplus (V,\rho) \oplus (W,\tau) \cong \left( (V,\rho)\otimes(\C^2,i) \right) \oplus (W,\mathrm{triv}),
\end{equation}
where $i:\Gamma\hookrightarrow\GL(2)$ is the inclusion and $\mathrm{triv}:\Gamma\to\GL(W)$ is the trivial homomorphism. It is clear that $\tau$ is unique up to $G$-conjugacy if it exists.
\item If $\Bun_G^\rho(\Aa^2/\Gamma)$ is nonempty, then, in the disconnected union~\eqref{eqn:disconnection2}, it is contained in $\Bun_G^\tau(\Aa^2/\Gamma)$ where $\tau$ is as in part \textup{(1)}. 
\end{enumerate}
\end{prop}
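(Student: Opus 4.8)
The plan is to establish the necessity of~\eqref{eqn:fundamental} in part (1), together with part (2), by a direct analysis of the ADHM monad of Proposition~\ref{prop:barth} over the $\Gamma$-fixed point $[1:0:0]\in\Aa^2$; the sufficiency of~\eqref{eqn:fundamental} in part (1) I would then obtain from Nakajima's construction of $\Gamma$-equivariant instantons (equivalently, from the nonemptiness criterion for the relevant quiver variety), which is where the real work lies.

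For the first part, assuming $\fM^{\Gamma,\rho,\mathrm{reg}}(V,W)$ is nonempty, let $(B_1,B_2,\bi,\bj)\in\Lambda(V,W)^{sc,\Gamma,\rho}$ be a point, with corresponding pair $(\cE,\Phi)\in\Bun_G^\rho(\Aa^2/\Gamma)$ under the isomorphism of Proposition~\ref{prop:barth-rho}, and let $\tau$ be the resulting representation of $\Gamma$ on the fibre $\cE_{[1:0:0]}$. The first step is to rewrite the condition that $(B_1,B_2,\bi,\bj)$ lies in $\Lambda(V,W)^{\Gamma,\rho}$, using~\eqref{eqn:gl2-action} together with the standard change-of-basis action of $\GL(V)$. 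A short computation shows that this is equivalent to the following: after the natural identification of the middle term $V\oplus V\oplus W$ of the monad with $(V\otimes\C^2)\oplus W$, and using that the fibres of $\cO_{\Pp^2}(\pm1)$ at the $\Gamma$-fixed point $[1:0:0]$ are trivial representations and that $\C^2$ is self-dual over $\SL(2)$, the map $a_0$ with components $B_1,B_2,\bj$ is $\Gamma$-equivariant from $(V,\rho)$ to $\big((V,\rho)\otimes(\C^2,i)\big)\oplus(W,\triv)$, and the map $b_0$ with components $-B_2,B_1,\bi$ is $\Gamma$-equivariant from that space back to $(V,\rho)$. By stability and costability, $a_0$ is injective and $b_0$ surjective (this is the fibrewise statement quoted from~\cite[Lemma 2.7]{nak3} and its dual), and $b_0a_0=0$ is the ADHM equation~\eqref{eqn:adhm}; so over $[1:0:0]$ we have a $\Gamma$-equivariant three-term complex with middle cohomology $\cE_{[1:0:0]}\cong(W,\tau)$. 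The second step is to pass to the representation ring $R(\Gamma)$ — legitimate since $\Gamma$ is reductive, so classes are additive over short exact sequences and an equality of classes is an isomorphism of representations — and take the Euler characteristic of this complex, giving
\[
[(W,\tau)]=\big[(V,\rho)\otimes(\C^2,i)\big]+[(W,\triv)]-2[(V,\rho)]
\]
in $R(\Gamma)$, which rearranges to~\eqref{eqn:fundamental}. Taking determinant characters shows $\det\tau$ is trivial, so $\tau$ is genuinely a homomorphism $\Gamma\to G$; and~\eqref{eqn:fundamental} visibly determines the class of $(W,\tau)$, hence $\tau$ up to $G$-conjugacy, from $\rho$. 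This gives the necessity in (1), and (2) follows at once: the $\tau$ just produced is by construction the representation on the fibre $\cE_{[1:0:0]}$, which is precisely the datum used to define the decomposition~\eqref{eqn:disconnection2}, so $(\cE,\Phi)\in\Bun_G^\tau(\Aa^2/\Gamma)$; as this holds for every point of $\Bun_G^\rho(\Aa^2/\Gamma)$ with the same $\tau$, we obtain the claimed inclusion.

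For the sufficiency in (1), suppose $\tau:\Gamma\to G$ satisfies~\eqref{eqn:fundamental}; one must produce a point of $\fM^{\Gamma,\rho,\mathrm{reg}}(V,W)$, equivalently a $\Gamma$-equivariant framed bundle on $\Pp^2$ whose representation on $H^1(\Pp^2,\cE(-\ell_\infty))$ is $\rho$. Here I expect the main obstacle: the monad argument above yields the necessity of~\eqref{eqn:fundamental} essentially for free, but sufficiency cannot be obtained merely by matching representation-ring classes of $a_0$ and $b_0$, because one must also guarantee the stability and costability of the resulting ADHM datum — injectivity of $a$ and surjectivity of $b$ on \emph{all} fibres of $\Pp^2$, not just at $[1:0:0]$ — and this is exactly the nontrivial content of the nonemptiness theorem for Nakajima quiver varieties (a dimension-vector/root condition on the doubled McKay quiver of $\Gamma$), the relevant cases being those of Nakajima in~\cite{nak1,nak3} and, for $\Gamma$ not of type $\A$, \cite[Appendix A]{nak4}, with the passage from finite subgroups of $\SL(2)$ to $\Gg_m$, $N$, $\SL(2)$ causing no difficulty as noted in \S\ref{ss:subgroups} (see also~\cite{c-b}). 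As consistency checks one verifies that~\eqref{eqn:fundamental} becomes the tautology $2n+r=2n+r$ when $\Gamma=\{1\}$, and for $\Gamma=\Gg_m$ reduces to the familiar relation $\dim W_i^\tau=\dim V_{i-1}+\dim V_{i+1}-2\dim V_i+\delta_{i0}\,r$ between dimension vectors of a type-$\A_\infty$ quiver variety.
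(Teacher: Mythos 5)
Your proposal is correct and takes essentially the same route as the paper: part (2) and the necessity in part (1) come from analysing the monad fibre at the $\Gamma$-fixed point $[1:0:0]$ (using the $\GL(2)$-equivariance of $\Theta$ from Proposition~\ref{prop:barth-equiv} to identify $V\oplus V$ with $V\otimes\C^2$), while the sufficiency is delegated to Nakajima's nonemptiness criterion for quiver varieties attached to the McKay graph of $\Gamma$. The only adjustment is to the citation for that criterion: the precise reference is \cite[Corollary 10.8]{nak2} (whose hypothesis, via \cite[Proposition 10.5]{nak2}, holds because $r\geq 2$), rather than \cite{nak1,nak3} or \cite[Appendix A]{nak4}.
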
 

\begin{proof}
Part (1) follows from the nonemptiness criterion given in~\cite[Corollary 10.8]{nak2} (the ``only if'' direction was proved earlier in~\cite[Lemma 8.1]{nak1}), after translating from the setting of general quivers to the specific case of the McKay graph of $\Gamma$. Note that the assumption of~\cite[Proposition 10.5]{nak2} does hold in our setting because $r\geq 2$.
Part (2) amounts to saying that if $(\cE,\Phi)\in\Bun_G(\Aa^2/\Gamma)$, and $\Gamma$ acts via $\rho$ on $V\cong H^1(\Pp^2,\cE(-\ell_\infty))$, then the action of $\Gamma$ on the fibre of $\cE$ at $[1:0:0]\in\Aa^2$ is via the representation $\tau:\Gamma\to G$ defined by~\eqref{eqn:fundamental}. This follows from the description $\cE=\ker(b)/\mathrm{im}(a)$ given in Proposition~\ref{prop:barth}; we need Proposition~\ref{prop:barth-equiv} to identify the $V\oplus V$ in the domain of $b$ with $V\otimes\C^2$ as in~\eqref{eqn:fundamental}.
\end{proof}

As is well known from~\cite{nak1,nak2}, one can translate~\eqref{eqn:fundamental} into very concrete terms. The equivalence class of a representation $\rho:G\to\GL(V)$ may be encoded in the \emph{multiplicity vector} $\bv=(v_i)_{i\in I_\Gamma}$ indexed by the vertices of the McKay graph of $\Gamma$, where $v_i$ is the multiplicity of the irreducible representation $S_i$ in $(V,\rho)$. Note that even if $\Gamma$ is infinite, $\bv$ is finitely-supported. By definition, the multiplicity vector of $(V,\rho)\otimes(\C^2,i)$ is $A_\Gamma\bv$ where $A_\Gamma=(a_{ij})_{i,j\in I_\Gamma}$ is the adjacency matrix of the McKay graph (for the non-simple graphs, we define $A_\Gamma=(2)$ when $\Gamma=\{1\}$ and $A_\Gamma=(\begin{smallmatrix}0&2\\2&0\end{smallmatrix})$ when $|\Gamma|=2$). The multiplicity vector of $(W,\mathrm{triv})$ is $r\delta_0$, where $\delta_0=(\delta_{0i})$ and $0\in I_\Gamma$ is the vertex corresponding to the trivial representation. Define the \emph{Cartan matrix} $C_\Gamma=(c_{ij})_{i,j\in I_\Gamma}$ by $c_{ij}=2\delta_{ij}-a_{ij}$. Then~\eqref{eqn:fundamental} is equivalent to saying that: 
\begin{equation} \label{eqn:fundamental1}
\text{The multiplicity vector of $(W,\tau)$ is $r\delta_0-C_\Gamma \bv$.}
\end{equation} 
The existence of $\tau:\Gamma\to G$ satisfying~\eqref{eqn:fundamental} is therefore equivalent to the condition
\begin{equation} \label{eqn:fundamental2}
r\delta_0-C_\Gamma \bv \geq 0\ \text{ (componentwise nonnegativity).} 
\end{equation} 
Here we have used the fact if $\tau$ is a representation of $\Gamma$ on $W$ satisfying~\eqref{eqn:fundamental}, then automatically $\tau(\Gamma)\subset\SL(W)=G$.  

When $\Gamma$ is finite, $C_\Gamma$ is the Cartan matrix of the corresponding affine Dynkin diagram. It is then well known that $C_\Gamma$ has corank 1, and its kernel is generated by the vector $(\dim S_i)_{i\in I_\Gamma}$ giving the dimensions of the irreducible representations of $\Gamma$. This means that if $(V,\rho)$ gives rise to $(W,\tau)$ as in Proposition~\ref{prop:fundamental}(1), then so does $(V,\rho)\oplus(\C\Gamma,\mathrm{reg})^{\oplus m}$ for any $m\in\N$, where $\mathrm{reg}$ denotes the regular representation of $\Gamma$; but $(W,\tau)$ determines $(V,\rho)$ up to this ambiguity. Thus each nonempty $\Bun_G^\tau(\Aa^2/\Gamma)$ has infinitely many connected components $\Bun_G^\rho(\Aa^2/\Gamma)$, of which, however, there is at most one with any fixed value of the second Chern class $n=\dim V$, as observed in~\cite[Remark 4.6(2)]{bf}. 

When $\Gamma$ is infinite, there is no finitely-supported nonzero vector in the kernel of $C_\Gamma$, so there is at most one $(V,\rho)$ giving rise to a given $(W,\tau)$, and each nonempty $\Bun_G^\tau(\Aa^2/\Gamma)$ is connected. We will now see how the correspondence between $\rho$ and $\tau$ works explicitly in the $\Gamma=\Gg_m$ and $\Gamma=N$ cases.      

\subsection{The $\Gamma=\Gg_m$ case}
\label{ss:Gm-case}

When $\Gamma=\Gg_m$, we continue to label $G$-conjugacy classes of homomorphisms $\tau:\Gg_m\to G$ by dominant coweights $\lambda\in\Lambda^+$, which in the present $G=\SL(r)$ case we identify with weakly decreasing $r$-tuples $(\lambda_1,\lambda_2,\cdots,\lambda_r)\in\Z^r$ such that $\lambda_1+\cdots+\lambda_r=0$. 

As in the previous subsection, we encode an equivalence classes of representations $\rho:\Gg_m\to\GL(V)$ by its multiplicity vector $\bv=(v_i)_{i\in\Z}$. The condition~\eqref{eqn:fundamental2} becomes
\begin{equation} \label{eqn:fund-gm}
r\delta_{0i}-2v_i+v_{i-1}+v_{i+1}\geq 0\text{ for all }i\in\Z,
\end{equation}
and if~\eqref{eqn:fund-gm} holds, the resulting coweight $\lambda=(\lambda_1,\lambda_2,\cdots,\lambda_r)$ is determined by the property that the number $m_i(\lambda)$ of occurrences of a given $i\in\Z$ among $\lambda_1,\cdots,\lambda_r$ equals the left-hand side of~\eqref{eqn:fund-gm}. All dominant coweights arise in this way: for a given $\lambda\in\Lambda^+$, the vector $\bv$ giving rise to it is defined by 
\begin{equation} \label{eqn:v-gm}
v_i=\begin{cases}
\displaystyle\sum_{s=1}^r\max\{\lambda_s-i,0\},&\text{ if $i\geq 0$,}\\
\displaystyle\sum_{s=1}^r\max\{i-\lambda_s,0\},&\text{ if $i<0$.}
\end{cases}
\end{equation}
Of course, $\bv$ is finitely supported because $v_i=0$ unless $\lambda_1>i>\lambda_r$. Note that 
\begin{equation} \label{eqn:n-gm}
n=\sum_{i\in\Z} v_i=\frac{1}{2}\sum_{s=1}^r \lambda_s^2,
\end{equation}
in accordance with~\cite[Theorem 5.2(1)]{bf}. 
  
Hence, in the $\Gamma=\Gg_m$ case, Proposition~\ref{prop:fundamental} says that for any $\lambda\in\Lambda^+$ we have $\Bun_G^\lambda(\Aa^2/\Gg_m)=\Bun_G^\rho(\Aa^2/\Gg_m)$ where $\rho:\Gg_m\to\GL(V)$ is a representation of $\Gg_m$ with multiplicity vector $\bv$ given by~\eqref{eqn:v-gm} (and degree $n$ given by~\eqref{eqn:n-gm}). 

Recall that Proposition~\ref{prop:barth-rho} gives us an isomorphism between $\Bun_G^\rho(\Aa^2/\Gg_m)$ and the quiver variety $\fM^{\Gg_m,\rho,\mathrm{reg}}(V,W)=\Lambda(V,W)^{sc,\Gg_m,\rho}/\GL_{\Gg_m,\rho}(V)$. We now recall how to pass to the more usual description of this as a quiver variety of type $\A$.

The decomposition of $(V,\rho)$ into its isotypic components for $\Gg_m$ constitutes a $\Z$-grading $V=\bigoplus_{i\in\Z} V_i$. Since we are using the trivial representation of $\Gg_m$ on $W$, the analogous $\Z$-grading of $W$ simply has $W_0=W$ and $W_i=\{0\}$ for $i\neq 0$. 

By definition of the $\Gg_m$-action on $\Lambda(V,W)$, a quadruple $(B_1,B_2,\bi,\bj)\in\Lambda(V,W)$ is fixed by $\Gg_m$ if and only if the following conditions are satisfied:
\begin{itemize}
\item $B_1$ and $B_2$ lower and raise the $\Z$-gradings respectively, i.e.\ $B_1(V_i)\subseteq V_{i-1}$ and $B_2(V_i)\subseteq V_{i+1}$ for all $i\in\Z$;
\item $\bi:W\to V$ respects the $\Z$-gradings, i.e.\ $\bi(W)\subseteq V_0$;
\item $\bj:V\to W$ respects the $\Z$-gradings, i.e.\ $\bj(V_i)=0$ unless $i=0$.
\end{itemize}
Thus an element of $\Lambda(V,W)^{\Gg_m,\rho}$ consists of a configuration of linear maps:
\[
\vcenter{
\xymatrix@R=15pt{
&&&W\ar@/_/[dd]_(.4){\bi}&&&
\\
\\
&V_{-2}\ar@/^/[r]^-{B_{2}}\ar@{.}[l]&V_{-1}\ar@/^/[l]^-{B_{1}}\ar@/^/[r]^-{B_{2}}&V_{0}\ar@/^/[l]^-{B_{1}}\ar@/^/[r]^-{B_{2}}\ar@/_/[uu]_(.6){\bj}&V_{1}\ar@/^/[l]^-{B_{1}}\ar@/^/[r]^-{B_{2}}&V_2\ar@{.}[r]\ar@/^/[l]^-{B_{1}}&
}
}
\]
where we have written simply $B_1$ for each component $B_1|_{V_i}$, and similarly for $B_2$. The equation~\eqref{eqn:adhm}, when broken into its various components, is equivalent to the following equations:
\begin{equation}
\begin{split}
B_1B_2+\bi\bj&=B_2B_1\text{ on }V_0,\\
B_1B_2&=B_2B_1\text{ on $V_i$ for all $i\neq 0$.}
\end{split}
\end{equation} 
Of course, since $V_i=\{0\}$ for sufficiently large $|i|$, the above configuration of linear maps is effectively finite. 

We can obviously identify $\GL_{\Gg_m,\rho}(V)$ with $\prod_{i\in\Z} \GL(V_i)$, and its action on the above configurations of linear maps is the obvious one. So, after making a suitable choice of orientiation of the underlying graph of type $\A$, we recover the usual set-up of quiver varieties of (finite) type $\A$ as in~\cite{nak1,nak2}. In this context it is more common to use the notations $\fM(\bv,\bw)$ for $\fM^{\Gg_m,\rho}(V,W)$ and $\fM_0^{\mathrm{reg}}(\bv,\bw)$ for $\fM^{\Gg_m,\rho,\mathrm{reg}}(V,W)$, where $\bv=(v_i)_{i\in\Z}$ is the dimension vector of the $\Z$-graded vector space $V$, and $\bw$ is defined likewise; in our case, $\bv$ is given by~\eqref{eqn:v-gm} and $\bw=r\delta_0$.   

Combining Proposition~\ref{prop:barth-rho} with Proposition~\ref{prop:identification}, we obtain the following special case of a result of Mirkovi\'c and Vybornov~\cite[Theorem 3.1]{mvy-cr}. Their proof involved a comparison with nilpotent orbits; the moduli-space proof below was suggested in~\cite{bf} and~\cite{nak4}.

\begin{prop} \label{prop:mv}
For any $\lambda\in\Lambda^+$, with $\bv$ defined by~\eqref{eqn:v-gm} and $\bw=r\delta_0$, we have a $G$-equivariant isomorphism
\[
\Psi\circ\Theta:\fM_0^{\mathrm{reg}}(\bv,\bw)\simto\Gr_0^\lambda
\]
which sends the orbit of $(B_1,B_2,\bi,\bj)\in\Lambda(V,W)^{sc,\Gg_m,\rho}$ to
\[
\gamma:=1+(\bj\bi)t^{-1}+(\bj B_2B_1\bi)t^{-2}+(\bj B_2^2B_1^2 \bi)t^{-3}+\cdots \in G[t^{-1}]_1=\Gr_0.
\] 
\end{prop}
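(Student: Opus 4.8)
The plan is to combine the two isomorphisms already at our disposal — the Varagnolo--Vasserot isomorphism $\Theta:\fM^{\Gg_m,\rho,\mathrm{reg}}(V,W)\simto\Bun_G^\rho(\Aa^2/\Gg_m)$ of Proposition~\ref{prop:barth-rho}, and the Braverman--Finkelberg isomorphism $\Psi:\Bun_G^\lambda(\Aa^2/\Gg_m)\simto\Gr_0^\lambda$ of Proposition~\ref{prop:identification} — and then to compute the composite explicitly on a representative. Since Proposition~\ref{prop:fundamental}, worked out in \S\ref{ss:Gm-case} via formula~\eqref{eqn:v-gm}, tells us precisely that $\Bun_G^\rho(\Aa^2/\Gg_m)=\Bun_G^\lambda(\Aa^2/\Gg_m)$ when $\bv$ is given by~\eqref{eqn:v-gm}, the composite $\Psi\circ\Theta$ is a $G$-equivariant isomorphism $\fM_0^{\mathrm{reg}}(\bv,\bw)\simto\Gr_0^\lambda$ essentially for free; the only work is to identify the formula for $\gamma$ in terms of $(B_1,B_2,\bi,\bj)$.

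First I would take a point $(B_1,B_2,\bi,\bj)\in\Lambda(V,W)^{sc,\Gg_m,\rho}$ and trace it through $\Theta$ to the monad description $\cE=\ker(b)/\img(a)$ of Proposition~\ref{prop:barth}, with $a,b$ as written there. The goal is to extract from this the associated point $\gamma\in\Grom$ via $\Psi$. By the reformulation of $\Psi$ given in \S\ref{ss:proof}, $\gamma$ is recovered from the transition functions of the bundle, and by Proposition~\ref{prop:psi-prelim}(2) it is the datum $((g_{11}^{01})')^{-1}(g_{11}^{10})'$ after the substitution $t\mapsto tu$. Equivalently — and more convenient here — using the identification of $\Psi$ with the Braverman--Finkelberg map in Proposition~\ref{prop:identification}, $\gamma=f_{(\cE,\Phi)}(1)$, where $f_{(\cE,\Phi)}:\Pp^1\to\Gr$ sends $t_0$ to the isomorphism class (with trivialization at $\infty$) of the restriction of $(\cE,\Phi)$ to the line $\{t_0\}\times\Pp^1$ — or rather, in the $\Pp^2$ picture of \S\ref{ss:p2}, to the restriction to the appropriate pencil of lines through $[0:0:1]$. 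So the computation reduces to: restrict the monad to a generic line, trivialize it near the two relevant points, and read off the resulting element of $G[t^{-1}]_1$.

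The key calculation is therefore the restriction of the monad to a line and the diagonalization of the resulting transition function. Restricting $a$ and $b$ to a line $\ell_{t_0}$ parametrized suitably, one gets a monad of $\cO_{\Pp^1}$-modules; the middle cohomology is a trivial rank-$r$ bundle, and the transition function between the two standard charts can be computed by solving, on the overlap, for the change of trivialization. This is a standard ADHM computation: one writes an element of $\ker(b)$ near $z_0=0$ versus near the other chart and the comparison produces a geometric-series expression. Concretely, one expects the matrix coefficient of $t^{-(k+1)}$ in $\gamma$ to come out as $\bj B_2^{k}B_1^{k}\bi$ (for the $\Gg_m$-fixed configuration, the only surviving compositions of $B_1,B_2$ that map $W\to V_0\to\cdots\to V_0\to W$ and contribute are the alternating words $B_2^k B_1^k$ passing through $V_0$), matching the stated formula. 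I would verify this first in the smallest nontrivial case — e.g.\ $G=\SL(2)$, $\lambda=\alpha$, $\dim V=1$, using Example~\ref{ex:sl2-special-p2} as a consistency check — and then observe that the general computation is formally identical.

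The main obstacle is bookkeeping: keeping straight the several competing coordinate conventions (the $\Pp^1\times\Pp^1$ charts of \S\ref{ss:gm-equiv} versus the $\Pp^2$ charts of \S\ref{ss:p2}, and the substitution $t\mapsto z_1z_2/z_0^2$ relating them, versus the variable $t_0$ parametrizing the pencil of lines), and making sure the trivialization at infinity is the one induced by the monad so that the output genuinely lands in $G[t^{-1}]_1$ rather than in some translate. Once the restriction-to-a-line computation is set up with the right normalizations, extracting the series $1+(\bj\bi)t^{-1}+(\bj B_2B_1\bi)t^{-2}+\cdots$ is a routine manipulation of the monad equations, using the ADHM relation~\eqref{eqn:adhm} (in its graded form from \S\ref{ss:Gm-case}) only to guarantee well-definedness. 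The $G$-equivariance is immediate from that of $\Theta$ and $\Psi$, and the fact that this is the Mirkovi\'c--Vybornov isomorphism follows because both constructions agree with the one in~\cite{mvy-cr} on the level of the underlying varieties, as already noted in~\cite{bf,nak4}.
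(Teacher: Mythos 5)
Your proposal is correct and follows essentially the same route as the paper's proof: compose $\Theta$ (Proposition~\ref{prop:barth-rho}) with $\Psi$ (Proposition~\ref{prop:identification}), then compute the transition function of the monad bundle on the overlap of the two charts away from the origin --- where $z_0B_1-z_1\id_V$ and $z_0B_2-z_2\id_V$ are invertible because $B_1,B_2$ shift the $\Z$-grading and are hence nilpotent --- expand as a geometric series, and use the grading to kill all terms except $\bj B_2^iB_1^i\bi$, exactly as you predict. The only cosmetic difference is in the final identification of the result with $\Psi(\cE,\Phi)$: the paper compares the computed triple in $Z'$ against the canonical representative of Proposition~\ref{prop:p2-psi} and concludes equality by a short triviality-of-intersection argument for the groups $H'$, whereas you propose reading off $f_{(\cE,\Phi)}(1)$ via the Braverman--Finkelberg characterization; both work.
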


\begin{proof}
We have already seen in Proposition~\ref{prop:barth-rho} that we have a $G$-equivariant isomorphism $\Theta:\fM_0^{\mathrm{reg}}(\bv,\bw)=\fM^{\Gg_m,\rho,\mathrm{reg}}(V,W)\simto\Bun_G^\rho(\Aa^2/\Gg_m)=\Bun_G^\lambda(\Aa^2/\Gg_m)$, and Proposition~\ref{prop:identification} gave a $G$-equivariant isomorphism $\Psi:\Bun_G^\lambda(\Aa^2/\Gg_m)\simto\Gr_0^\lambda$. All that remains to be proved is the explicit formula for the composition $\Psi\circ\Theta$. 

To a configuration of maps $(B_1,B_2,\bi,\bj)$ as above that is stable and costable, the isomorphism $\Theta$ assigns a $\Gg_m$-equivariant pair $(\cE,\Phi)\in\Bun_G(\Aa^2)$ by the rule of Proposition~\ref{prop:barth}; that is, $\cE=\ker(b)/\mathrm{im}(a)$ with $\Phi$ being the trivialization on $\ell_\infty$ explained after Proposition~\ref{prop:barth}. We want to show that $\Psi(\cE,\Phi)\in\Gr_0^\lambda$ equals the element $\gamma$ in the statement, and we will use the description of $\Bun_G(\Aa^2)$ and of the bijection $\Psi:\Bun_G(\Aa^2/\Gg_m)\to\Gr_0$ given in \S\ref{ss:p2}.

Consider the restriction of $\cE$ to the open set $U_1$ defined by $z_1\neq 0$. Since $B_1$ lowers the $\Z$-grading on $V$ as seen above, it is nilpotent; hence, on $U_1$, the linear transformation $z_0 B_1 - z_1\mathrm{id}_V$ appearing in the definitions of $a$ and $b$ is invertible. So we can extend $\Phi^{-1}|_{U_1\smallsetminus U_0}$ to a trivialization of $\cE$ on $U_1$, namely the isomorphism $W\otimes\cO_{U_1}\simto \cE|_{U_1}$ induced by the injective map
\[
[0\quad -z_0(z_0 B_1-z_1 \mathrm{id}_V)^{-1}\bi\quad \mathrm{id}_W]:W\otimes\cO_{U_1}\to (V\oplus V\oplus W)\otimes\cO_{U_1},
\]
whose image clearly lies in $\ker(b|_{U_1})$ and is transverse to $\mathrm{im}(a|_{U_1})$.

In the same way, we can extend $\Phi^{-1}|_{U_2\smallsetminus U_0}$ to a trivialization of $\cE$ on $U_2$, namely the isomorphism $W\otimes\cO_{U_2}\simto \cE|_{U_2}$ induced by the injective map
\[
[z_0(z_0 B_2-z_2 \mathrm{id}_V)^{-1}\bi\quad 0\quad \mathrm{id}_W]:W\otimes\cO_{U_2}\to (V\oplus V\oplus W)\otimes\cO_{U_2},
\]
whose image clearly lies in $\ker(b|_{U_2})$ and is transverse to $\mathrm{im}(a|_{U_2})$.

The transition function on $U_1\cap U_2$ relating these trivializations of $\cE$ on $U_1$ and $U_2$ is the following element of $\ker(G[z_0/z_1,z_2/z_1,z_1/z_2]\to G[z_2/z_1,z_1/z_2])$:
\[
\begin{split}
g_2^1&=\mathrm{id}_W+\bj z_0(z_0B_2-z_2\mathrm{id}_V)^{-1}z_0(z_0 B_1-z_1 \mathrm{id}_V)^{-1}\bi\\
&=\mathrm{id}_W+(z_0^2/z_1z_2)\,\bj(\mathrm{id}_V-(z_0/z_2)B_2)^{-1}(\mathrm{id}_V-(z_0/z_1)B_1)^{-1}\bi\\ 
&=\mathrm{id}_W+(z_0^2/z_1z_2)\,\bj\left(\sum_{i,j=0}^\infty (z_0/z_2)^i(z_1/z_2)^j B_2^i B_1^j\right)\bi\\
&=\mathrm{id}_W+(z_0^2/z_1z_2)\,\bj\left(\sum_{i=0}^\infty (z_0/z_2)^i(z_1/z_2)^i B_2^i B_1^i\right)\bi\\
&=\gamma|_{t\mapsto z_1z_2/z_0^2},
\end{split}
\]
where the fourth equality holds because of the known effect of $B_1,B_2,\bi,\bj$ on the $\Z$-gradings.

Hence, as an element of $\Bun_G(\Aa^2)=Z'/H'$, $(\cE,\Phi)$ is the $H'$-orbit of some triple $(g_1^0,g_2^0,g_2^1)\in Z'$ with $g_2^1=\gamma|_{t\mapsto z_1z_2/z_0^2}$. On the other hand, if $\Psi(\cE,\Phi)=\gamma'\in\Gr_0$, then Proposition~\ref{prop:p2-psi} says that $(\cE,\Phi)$ is the $H'$-orbit of a triple $(\tilde{g}_{1}^{0},\tilde{g}_{2}^{0},\tilde{g}_{2}^{1})\in Z'$ with $\tilde{g}_2^1=\gamma'|_{t\mapsto z_1z_2/z_0^2}$. We conclude that
\begin{equation} \label{eqn:endgame}
\gamma'|_{t\mapsto z_1z_2/z_0^2}=h_2\left(\gamma|_{t\mapsto z_1z_2/z_0^2}\right)h_1^{-1}
\end{equation} 
for some elements
\[
\begin{split}
h_1&\in \ker(G[z_0/z_1,z_2/z_1]\to G[z_2/z_1]), \\
h_2 &\in \ker(G[z_0/z_2,z_1/z_2]\to G[z_1/z_2]).
\end{split}
\]

We claim that it follows from~\eqref{eqn:endgame} that $\gamma'=\gamma$ as desired. The reason is that $(\gamma'|_{t\mapsto z_1z_2/z_0^2})h_1(\gamma|_{t\mapsto z_1z_2/z_0^2})^{-1}$ belongs to the group $G[z_0/z_1,z_2/z_1,z_0^2/z_1z_2]$, whose intersection with $\ker(G[z_0/z_2,z_1/z_2]\to G[z_1/z_2])$ is trivial. So~\eqref{eqn:endgame} forces $h_2=1$, and similarly $h_1=1$.
\end{proof}

\begin{rmk} \label{rmk:mv}
The result of Mirkovi\'c and Vybornov~\cite[Theorem 3.1]{mvy-cr} is considerably more general than Proposition~\ref{prop:mv}. Firstly, it involves $\Gr_\mu^\lambda$ for general $\mu$, as in Remark~\ref{rmk:slice}; this corresponds to taking more general choices of $\bw$. Secondly, it relates not just $\fM_0^{\mathrm{reg}}(\bv,\bw)$ with $\Gr_\mu^\lambda$, but the whole affine variety $\fM_0(\bv,\bw)$ with the closure $\Grbar_\mu^\lambda$, and the whole quiver variety $\fM(\bv,\bw)$ with a certain desingularization of $\Grbar_\mu^\lambda$. At least the first of these variants can be proved by an argument similar to the above proof of Proposition~\ref{prop:mv}, using the Uhlenbeck closure of the moduli space $\Bun_G^\lambda(\Aa^2/\Gg_m)$ as in Remark~\ref{rmk:uhlenbeck}; for this, combine~\cite[Theorem 5.2(2)]{bf},~\cite[Theorem 5.12]{bfg} and~\cite[Theorem 1]{vv}.
\end{rmk}

Recall that the involution $\iota$ of $\Gr_0$ satisfies $\iota(\Gr_0^\lambda)=\Gr_0^{-w_0\lambda}$. In terms of $r$-tuples, $-w_0(\lambda_1,\cdots,\lambda_r)=(-\lambda_r,\cdots,-\lambda_1)$. If $\bv$ is the dimension vector corresponding to $\lambda$ as above, then the dimension vector corresponding to $-w_0\lambda$ is $\bv^\dagger$ where $v_i^\dagger=v_{-i}$. Indeed, the representation $\rho^\dagger:\Gg_m\to\GL(V)$ for $-w_0\lambda$ can be chosen to be the composition of $\rho:\Gg_m\to\GL(V)$ with the inverse map on $\Gg_m$, so that the $i$th isotypic component of $V$ for $\rho^\dagger$ is exactly $V_{-i}$.  

\begin{prop} \label{prop:mv-iota}
Let $\lambda\in\Lambda^+$ and define $\bv$ and $\bw$ as above. Under the isomorphisms of Proposition~\ref{prop:mv}, the map $\iota:\Gr_0^\lambda\simto\Gr_0^{-w_0\lambda}$ corresponds to the map $\fM_0^{\mathrm{reg}}(\bv,\bw)\simto\fM_0^{\mathrm{reg}}(\bv^\dagger,\bw)$ which sends the orbit of $(B_1,B_2,\bi,\bj)\in\Lambda(V,W)^{sc,\Gg_m,\rho}$ to the orbit of $(-B_2,B_1,\bi,\bj)\in\Lambda(V,W)^{sc,\Gg_m,\rho^\dagger}$.
\end{prop}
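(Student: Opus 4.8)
The plan is to recognize the map $(B_1,B_2,\bi,\bj)\mapsto(-B_2,B_1,\bi,\bj)$ as nothing but the action of the element $\sigma:=[\begin{smallmatrix}0&1\\-1&0\end{smallmatrix}]\in\SL(2)\subset\GL(2)$ on $\Lambda(V,W)$ given by~\eqref{eqn:gl2-action}, and then to transport this identification through the chain of isomorphisms
$\fM_0^{\mathrm{reg}}(\bv,\bw)\overset{\Theta}{\simto}\Bun_G^\lambda(\Aa^2/\Gg_m)\overset{\Psi}{\simto}\Gr_0^\lambda$
using only the equivariance statements already established.

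First I would dispose of the elementary points about the map itself. Substituting $[\begin{smallmatrix}\alpha&\beta\\\gamma&\delta\end{smallmatrix}]=\sigma$ into~\eqref{eqn:gl2-action} yields exactly $\sigma\cdot(B_1,B_2,\bi,\bj)=(-B_2,B_1,\bi,\bj)$. By the discussion in \S\ref{ss:1-case}, this $\GL(2)$-action preserves $\Lambda(V,W)^{sc}$ and commutes with the $\GL(V)\times G$-action. Moreover $\sigma$ normalizes the diagonal $\Gg_m\subset\SL(2)$ and conjugation by $\sigma$ inverts it; a one-line computation using that the $\GL(2)$-part of the group commutes with $\GL(V)$ then shows the $\sigma$-action carries $\Lambda(V,W)^{sc,\Gg_m,\rho}$ onto $\Lambda(V,W)^{sc,\Gg_m,\rho^\dagger}$, where $\rho^\dagger$ (the composite of $\rho$ with inversion $z\mapsto z^{-1}$) is precisely the representation attached to $-w_0\lambda$ in \S\ref{ss:Gm-case}. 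Hence the map descends to a well-defined isomorphism $\fM_0^{\mathrm{reg}}(\bv,\bw)=\fM^{\Gg_m,\rho,\mathrm{reg}}(V,W)\simto\fM^{\Gg_m,\rho^\dagger,\mathrm{reg}}(V,W)=\fM_0^{\mathrm{reg}}(\bv^\dagger,\bw)$, which is the map in the statement.

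Next I would chain the equivariances. By Proposition~\ref{prop:barth-equiv}, $\Theta$ is $\GL(2)$-equivariant, so it intertwines the $\sigma$-action on $\fM^{\mathrm{reg}}(V,W)$ with the $\sigma$-action on $\Bun_G(\Aa^2)$ coming from the action of $\sigma$ on the base; restricting to $\Gg_m$-fixed loci and using~\eqref{eqn:barth-gamma} to identify these with moduli of $\Gg_m$-equivariant bundles, $\Theta$ identifies the map of the previous paragraph with the action of $\sigma\in N_{\GL(2)}(\Gg_m)$ on $\Bun_G(\Aa^2/\Gg_m)$. On the other hand, by Theorem~\ref{thm:normalizer} --- specifically its concluding assertion that $[\begin{smallmatrix}0&1\\-1&0\end{smallmatrix}]\in\SL(2)$ acts on $\Grom$ via $\iota$ --- the bijection $\Psi$ intertwines the $\sigma$-action on $\Bun_G(\Aa^2/\Gg_m)$ with $\iota$ on $\Grom$. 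Composing, $\Psi\circ\Theta$ carries $(B_1,B_2,\bi,\bj)\mapsto(-B_2,B_1,\bi,\bj)$ to $\iota\colon\Gr_0^\lambda\simto\Gr_0^{-w_0\lambda}$, as claimed; this is consistent with $\iota(\Gr_0^\lambda)=\Gr_0^{-w_0\lambda}$ and with the matching $\rho\leftrightarrow\lambda$, $\rho^\dagger\leftrightarrow-w_0\lambda$ from Proposition~\ref{prop:fundamental}.

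The only real friction is in chaining those last two equivariance statements: Proposition~\ref{prop:barth-equiv} is phrased via the $\Pp^2$-model~\eqref{eqn:old-def} of $\Bun_G(\Aa^2)$, whereas Theorem~\ref{thm:normalizer} was proved via the $\Pp^1\times\Pp^1$-model~\eqref{eqn:new-def}, so one must note that these yield the same $N_{\GL(2)}(\Gg_m)$-action and that the present $\Psi$ of Proposition~\ref{prop:p2-psi} is literally the Braverman--Finkelberg bijection of Theorem~\ref{thm:normalizer} --- which is exactly the content of \S\ref{ss:p2} and of the proof of Proposition~\ref{prop:p2-psi}. As an alternative sidestepping all model-comparison bookkeeping, one may compute directly from the power-series formula of Proposition~\ref{prop:mv}: writing $\gamma=\sum_{k\ge0}c_kt^{-k}$ with $c_0=\id_W$ and $c_k=\bj B_2^{k-1}B_1^{k-1}\bi$ for $k\ge1$, the substitution $(B_1,B_2)\mapsto(-B_2,B_1)$ replaces $\gamma$ by $\gamma'=\sum_{k\ge0}c_k't^{-k}$ with $c_k'=(-1)^{k-1}\bj B_1^{k-1}B_2^{k-1}\bi$, and one verifies $\gamma'(t^{-1})\,\gamma(-t^{-1})=1$ by comparing coefficients of $t^{-n}$ --- the coefficient of $t^{-2}$ already forcing the ADHM relation in the form $[B_2,B_1]=\bi\bj$, and a short induction exploiting the $\Z$-grading (exactly as in the proof of Proposition~\ref{prop:mv}) handling general $n$. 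I would present the first argument as the proof and, if desired, record the second as a remark.
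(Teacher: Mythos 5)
Your proposal is correct and is essentially the paper's own proof: the paper likewise observes that $(B_1,B_2,\bi,\bj)\mapsto(-B_2,B_1,\bi,\bj)$ is the action of $[\begin{smallmatrix}0&1\\-1&0\end{smallmatrix}]$ from~\eqref{eqn:gl2-action} and deduces the claim by combining the $\GL(2)$-equivariance of $\Theta$ (Proposition~\ref{prop:barth-equiv}) with Theorem~\ref{thm:normalizer}. Your alternative direct verification of the power-series identity is also exactly the computation the paper delegates to the earlier reference (there cited as a known lemma), so both of your routes match the paper's.
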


\begin{proof}
Using the formula for the isomorphism in Proposition~\ref{prop:mv}, this claim amounts to saying that for any $(B_1,B_2,\bi,\bj)\in\Lambda(V,W)^{sc,\Gg_m,\rho}$ we have the following equality in $G[t^{-1}]_1$:
\begin{equation}
\left(1-\sum_{i=0}^\infty (\bj B_1^i B_2^i\bi)\,t^{-i-1}\right)^{-1}
=1+\sum_{i=0}^\infty (\bj B_2^i B_1^i\bi)\,t^{-i-1}.
\end{equation}
This equality was proved by direct computation in~\cite[Lemma 4.10]{hl}. But we now have a more conceptual explanation: the claim follows by combining Theorem~\ref{thm:normalizer} with the $\GL(2)$-equivariance of $\Theta$ proved in Proposition~\ref{prop:barth-equiv}. 
\end{proof}

If $\lambda\in\Lambda_1^+$, i.e.\ $\lambda_{r+1-s}=-\lambda_s$ for all $s$, then the corresponding $\bv$ satisfies $\bv^\dagger=\bv$. In this case, Proposition~\ref{prop:mv-iota} says that the involution $\iota$ of $\Gr_0^\lambda$ corresponds under the isomorphism of Proposition~\ref{prop:mv} to a \emph{diagram involution} of $\fM_0^{\mathrm{reg}}(\bv,\bw)$, in the  sense of~\cite[Section 3.2]{hl} (specifically, set $\sigma_n=\mathrm{id}$ in~\cite[Example 3.8]{hl}; one must take into account that in~\cite{hl} we used a slightly different form of the ADHM equation). In the next subsection we will effectively consider the fixed-point subvarieties of these involutions. On the quiver variety side, such a fixed-point subvariety is described in great generality by~\cite[Theorem 3.9]{hl}. However, there will be no need to invoke that result; the reason we can bypass it is that, in the special case of diagram involutions of type-$\A$ quiver varieties, one can deduce~\cite[Theorem 3.9]{hl} easily from Proposition~\ref{prop:disconnection-full}, as outlined in~\cite[Section 3.7]{hl}.     

\subsection{The $\Gamma=N$ case}
\label{ss:N-case}

Recall from \S\ref{ss:n-equiv} the notation $\Xi$ for the set of $G$-conjugacy classes of homomorphisms $\tau:N\to G$, i.e.\ equivalence classes of deteminant-$1$ representations of $N$ on $W=\C^r$, and the disjoint union $\Xi=\bigsqcup_{\lambda\in\Lambda_1^+}\Xi(\lambda)$.

Suppose that $\lambda\in\Lambda_1^+$, and let $\xi\in\Xi(\lambda)$. By definition, we can choose $\tau\in\xi$ such that the restriction of $\tau$ to $\Gg_m$ is $\lambda$. Recall the labelling of the irreducible representations of $N$ given in \S\ref{ss:subgroups}, and note that a representation of $N$ has determinant $1$ if and only if the sum of the multiplicities of $S_{0,-},S_2,S_4,\cdots$ is even. For $i>0$, the multiplicity of the irreducible representation $S_i$ in $(W,\tau)$ is $m_i(\lambda)=m_{-i}(\lambda)$, and the sum of the multiplicities of $S_{0,+}$ and $S_{0,-}$ is $m_0(\lambda)$. Thus, given $\lambda\in\Lambda_1^+$, the choice of $\xi\in\Xi(\lambda)$ is equivalent to the choice of an ordered pair of nonnegative integers $(m_{0,+},m_{0,-})$ satisfying
\begin{equation} \label{eqn:ordered-pair}
m_{0,+}+m_{0,-}=m_0(\lambda),\quad m_{0,-}\equiv m_2(\lambda)+m_4(\lambda)+\cdots\ (\text{mod}\ 2), 
\end{equation}
where $m_{0,\pm}$ specifies the multiplicity of $S_{0,\pm}$.

Let $\rho:N\to\GL(V)$ be a representation, and let $\bv=(v_{0,+},v_{0,-},v_1,v_2,\cdots)$ be its multiplicity vector. For convenience set $v_0:=v_{0,+}+v_{0,-}$. The multiplicity vector of the restriction of $\rho$ to $\Gg_m$ is $\bv^{\A}:=(\cdots,v_2,v_1,v_0,v_1,v_2,\cdots)$. The condition~\eqref{eqn:fundamental2} becomes the conjunction of the following inequalities:
\begin{equation} \label{eqn:fund-n}
\begin{split}
r-2v_{0,+}+v_1&\geq 0,\\
-2v_{0,-}+v_1&\geq 0,\\
-2v_i+v_{i-1}+v_{i+1}&\geq 0\text{ for all }i\geq 1,
\end{split}
\end{equation}
and if~\eqref{eqn:fund-n} is satisfied, the resulting element $\xi\in\Xi$ is the one with multiplicity vector $(m_{0,+},m_{0,-},m_1,m_2,\cdots)$ given by the left-hand sides of~\eqref{eqn:fund-n}. Note that 
\[ m_{0,-}+m_2+m_4+\cdots =  -2v_{0,+}+2v_1-2v_2+2v_3-2v_4+\cdots \]
is indeed even. Set $m_0:=m_{0,+}+m_{0,-}=r-2v_0+2v_1$. Thus $\xi\in\Xi(\lambda)$ where $\lambda\in\Lambda_1^+$ has multiplicity vector $(\cdots,m_2,m_1,m_0,m_1,m_2,\cdots)$. Clearly this coweight $\lambda$ is the one associated to $\bv^{\A}$ as in \S\ref{ss:Gm-case}.

Conversely, given $\lambda\in\Lambda_1^+$, we can define $\bv^{\A}$ by~\eqref{eqn:v-gm}, and it will automatically have the symmetric form $\bv^{\A}=(\cdots,v_2,v_1,v_0,v_1,v_2,\cdots)$. Note that $v_0$ is the sum of the positive entries in the $r$-tuple $\lambda$, and $v_0-v_1$ is the number of these positive entries, implying that $v_1\equiv m_2(\lambda)+m_4(\lambda)+\cdots\ (\text{mod}\ 2)$. Suppose that $\xi\in\Xi(\lambda)$ corresponds to the ordered pair $(m_{0,+},m_{0,-})$ satisfying~\eqref{eqn:ordered-pair}; in terms of $\bv^{\A}$, this latter condition becomes
\begin{equation} \label{eqn:ordered-pair2}
m_{0,+}+m_{0,-}=r-2v_0+2v_1,\quad m_{0,-}\equiv v_1\ (\text{mod}\ 2).
\end{equation}
Then $\xi$ arises from some representation $\rho:N\to\GL(V)$ if and only if we can write
\begin{equation} \label{eqn:prelim}
m_{0,+}=r-2v_{0,+}+v_1,\quad m_{0,-}=-2v_{0,-}+v_1
\end{equation}
for some nonnegative integers $v_{0,+},v_{0,-}$ (whose sum is then necessarily $v_0$). Rearranging~\eqref{eqn:prelim}, we see that this condition is equivalent to requiring that the following are nonnegative integers:
\begin{equation} \label{eqn:v-N}
v_{0,+}:=\frac{1}{2}(r-m_{0,+}+v_1),\quad v_{0,-}:=\frac{1}{2}(-m_{0,-}+v_1),
\end{equation}
and this in turn, in view of~\eqref{eqn:ordered-pair2}, is equivalent to requiring simply that 
$m_{0,-}\leq v_1$.

To sum up, if $m_{0,-}\leq v_1$, 
then Proposition~\ref{prop:fundamental} tells us that $\Bun_G^\xi(\Aa^2/N)=\Bun_G^\rho(\Aa^2/N)$ is nonempty, and Proposition~\ref{prop:barth-rho} tells us that it is connected. If $m_{0,-}>v_1$, 
then Proposition~\ref{prop:fundamental} tells us that $\Bun_G^\xi(\Aa^2/N)$ is empty.

Recall that Proposition~\ref{prop:barth-rho} gives us an isomorphism between $\Bun_G^\rho(\Aa^2/N)$ and $\fM^{N,\rho,\mathrm{reg}}(V,W)=\Lambda(V,W)^{sc,N,\rho}/\GL_{N,\rho}(V)$. We will now see how to express the latter variety as a quiver variety of type $\D$ in the usual sense. The idea is much the same as in the type-$\A$ case considered in \S\ref{ss:Gm-case}.

The decomposition of $(V,\rho)$ into its isotypic components for $N$ is closely related to the $\Z$-grading $V=\bigoplus_{i\in\Z} V_i$ obtained from the restriction of $\rho$ to $\Gg_m$. Namely, we have a direct sum decomposition $V_0=V_{0,+}\oplus V_{0,-}$ into isotypic components corresponding to the irreducible representations $S_{0,+}$ and $S_{0,-}$, and the isotypic component corresponding to $S_i$ for $i>0$ is $V_i\oplus V_{-i}\cong V_i\otimes S_i$. Note that $\rho([\begin{smallmatrix}0&1\\-1&0\end{smallmatrix}])$ maps $V_i$ isomorphically onto $V_{-i}$ for all $i\neq 0$, and acts as the identity on $V_{0,+}$ and as minus the identity on $V_{0,-}$. Since we are using the trivial representation of $N$ on $W$, the analogous decomposition of $W$ has only one nonzero term, $W=W_{0,+}$. 

A quadruple $(B_1,B_2,\bi,\bj)\in\Lambda(V,W)$ is fixed by $N$ if and only if it is fixed by $\Gg_m$ and also fixed by $[\begin{smallmatrix}0&1\\-1&0\end{smallmatrix}]$. In \S\ref{ss:Gm-case} we have already seen how to translate the condition of being fixed by $\Gg_m$ in terms of the $\Z$-gradings on $V$ and $W$. By definition of the $N$-action on $\Lambda(V,W)$, the additional condition of being fixed by $[\begin{smallmatrix}0&1\\-1&0\end{smallmatrix}]$ amounts to the following extra constraints:
\begin{itemize}
\item $B_2=\rho([\begin{smallmatrix}0&1\\-1&0\end{smallmatrix}])B_1\rho([\begin{smallmatrix}0&1\\-1&0\end{smallmatrix}])^{-1}$, which means that the maps $B_1|_{V_i}$ for $i\leq 0$ are uniquely determined by the maps $B_2|_{V_i}$ for $i\geq 0$ and the maps $B_2|_{V_i}$ for $i<0$ are uniquely determined by the maps $B_1|_{V_i}$ for $i>0$;
\item $\bi(W)\subseteq V_{0,+}$;
\item $\bj(V_{0,-})=0$.
\end{itemize}
Thus an element of $\Lambda(V,W)^{N,\rho}$, after removing the redundant data, consists of a configuration of linear maps of the form
\[
\vcenter{
\xymatrix@R=15pt{
W\ar@/_/[dd]_(.4){\bi}&&&&
\\
\\
V_{0,+}\ar@/^/[dr]^(.6){B_{2,+}}\ar@/_/[uu]_(.6){\bj}&&&&\\
&V_{1}\ar@/^/[ul]^(.6){B_{1,+}}\ar@/^/[dl]^(.4){B_{1,-}}\ar@/^/[r]^-{B_{2}}&V_2\ar@/^/[l]^-{B_{1}}\ar@/^/[r]^-{B_{2}}&V_3\ar@{.}[r]\ar@/^/[l]^-{B_{1}}&\\
V_{0,-}\ar@/^/[ur]^(.4){B_{2,-}}&&&&
}
}
\]
where $B_{1,+}$ and $B_{1,-}$ are the components of $B_1|_{V_1}$, and $B_{2,+}$ and $B_{2,-}$ are the components of $B_2|_{V_0}$, relative to the direct sum decomposition $V_0=V_{0,+}\oplus V_{0,-}$. The equation~\eqref{eqn:adhm}, when broken into its various components and simplified using the rule $B_2=\rho([\begin{smallmatrix}0&1\\-1&0\end{smallmatrix}])B_1\rho([\begin{smallmatrix}0&1\\-1&0\end{smallmatrix}])^{-1}$, is equivalent to the following equations:
\begin{equation} \label{eqn:type-d}
\begin{split}
2B_{1,+}B_{2,+}+\bi\bj&=0\text{ on $V_{0,+}$,}\\
B_{1,-}B_{2,-}&=0\text{ on $V_{0,-}$,}\\ 
B_1B_2&=B_{2,+}B_{1,+}+B_{2,-}B_{1,-}\text{ on $V_1$,}\\
B_1B_2&=B_2B_1\text{ on $V_i$ for all $i>1$.} 
\end{split}
\end{equation}
Again, since $V_i=\{0\}$ for sufficiently large $i$, the above configuration of maps is effectively finite.

We can obviously identify $\GL_{N,\rho}(V)$ with $\GL(V_{0,+})\times\GL(V_{0,-})\times\prod_{i>0}\GL(V_i)$, and its action on the above configurations of linear maps is the obvious one. So, after making a suitable choice of orientiation of the underlying graph of type $\D$, we recover the usual set-up of quiver varieties of (finite) type $\D$ as in~\cite{nak1,nak2}. (We can account for the factor of $2$ in the first equation of~\eqref{eqn:type-d} by scaling $\bi$, as in~\cite[Example 3.12]{hl}.) In this context it is more common to use the notations $\fM(\bv,\bw)$ for $\fM^{N,\rho}(V,W)$ and $\fM_0^{\mathrm{reg}}(\bv,\bw)$ for $\fM^{N,\rho,\mathrm{reg}}(V,W)$, where $\bv=(v_{0,+},v_{0,-},v_1,v_2,\cdots)$ is the vector of dimensions of $V_{0,+},V_{0,-},V_1,V_2,\cdots$ and $\bw$ is defined likewise; in our case, $\bv$ is given by~\eqref{eqn:v-gm} and~\eqref{eqn:v-N}, and $\bw=(r,0,0,0,\cdots)$.   

We can now state a more specific version of Theorem~\ref{thm:intro-glr}:

\begin{prop} \label{prop:glr}
Let $\lambda=(\lambda_1,\cdots,\lambda_r)\in\Lambda_1^+$ and $\xi\in\Xi(\lambda)$. Then $(\Gr_0)^{\iota,\xi}$ is nonempty if and only if the pair $(m_{0,+},m_{0,-})$ determined by $\xi$, which by definition satisfies~\eqref{eqn:ordered-pair}, also satisfies
\[ m_{0,-}\leq \sum_{s=1}^r \max\{\lambda_s-1,0\}. \] 
Assume henceforth that this condition holds. Define $\bv=(v_{0,+},v_{0,-},v_1,v_2,\cdots)$ by~\eqref{eqn:v-gm} and~\eqref{eqn:v-N} and set $\bw=(r,0,0,0,\cdots)$. Then we have a $G$-equivariant isomorphism
\[
\Psi\circ\Theta:\fM_0^{\mathrm{reg}}(\bv,\bw)\simto(\Gr_0)^{\iota,\xi}
\]
which sends the orbit of a configuration of linear maps as above to the following element of $G[t^{-1}]_1$:
\[
1+(\bj\bi)t^{-1}+\sum_{i=0}^\infty (-1)^{i+1} (\bj B_{1,+}B_1^i B_2^iB_{2,+}\bi)\,t^{-i-2}.
\]
In particular, $(\Gr_0)^{\iota,\xi}$ is nonsingular and connected when it is nonempty.
\end{prop}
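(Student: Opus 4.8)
The plan is to assemble Proposition~\ref{prop:glr} from pieces that are by now almost all in place, the main work being a Laurent-series computation analogous to the proof of Proposition~\ref{prop:mv}. First I would dispose of the nonemptiness criterion: by the discussion preceding the proposition, $\Bun_G^\xi(\Aa^2/N)=\Bun_G^\rho(\Aa^2/N)$ is nonempty exactly when the integers $v_{0,+},v_{0,-}$ of~\eqref{eqn:v-N} are nonnegative, which was shown to reduce to the single inequality $m_{0,-}\leq v_1$; and by~\eqref{eqn:v-gm}, $v_1=\sum_{s=1}^r\max\{\lambda_s-1,0\}$, giving the stated bound. Since $(\Gr_0)^{\iota,\xi}=\Psi(\Bun_G^\xi(\Aa^2/N))$ by definition (\S\ref{ss:n-equiv}), nonemptiness transfers directly. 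Nonsingularity and connectedness when nonempty then follow from Proposition~\ref{prop:connected} (applied to $\fM^{N,\rho,\mathrm{reg}}(V,W)$) together with the isomorphism $\fM^{N,\rho,\mathrm{reg}}(V,W)\simto\Bun_G^\rho(\Aa^2/N)$ of Proposition~\ref{prop:barth-rho} and the isomorphism $\Psi\colon\Bun_G^\xi(\Aa^2/N)\simto(\Gr_0)^{\iota,\xi}$ from Theorem~\ref{thm:normalizer2}. Composing these three $G$-equivariant isomorphisms gives the asserted isomorphism $\Psi\circ\Theta\colon\fM_0^{\mathrm{reg}}(\bv,\bw)\simto(\Gr_0)^{\iota,\xi}$; it only remains to verify the explicit formula.

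For the formula I would run the same computation as in the proof of Proposition~\ref{prop:mv}, now for a quadruple $(B_1,B_2,\bi,\bj)\in\Lambda(V,W)^{sc,N,\rho}$, i.e.\ one of the special type-$\D$ shape described just before the proposition, with $B_2=\rho(\left[\begin{smallmatrix}0&1\\-1&0\end{smallmatrix}\right])B_1\rho(\left[\begin{smallmatrix}0&1\\-1&0\end{smallmatrix}\right])^{-1}$, $B_{2,+},B_{2,-}$ the components of $B_2|_{V_0}$ and $B_{1,+},B_{1,-}$ the components of $B_1|_{V_1}$. The bundle $\Theta(B_1,B_2,\bi,\bj)$ already lies in $\Bun_G^\lambda(\Aa^2/\Gg_m)$, so by Proposition~\ref{prop:mv} its image under $\Psi$ is $\gamma=1+(\bj\bi)t^{-1}+(\bj B_2B_1\bi)t^{-2}+(\bj B_2^2B_1^2\bi)t^{-3}+\cdots$. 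Now I use the decomposition $V_0=V_{0,+}\oplus V_{0,-}$ together with $\bi(W)\subseteq V_{0,+}$, $\bj(V_{0,-})=0$: since $B_1$ lowers and $B_2$ raises the $\Z$-grading, for $i\geq 1$ the operator $B_2^iB_1^i$ maps $V_0\to V_0$ through $V_i$, so $\bj B_2^iB_1^i\bi = \bj B_{2,+}(B_2^{i-1}B_1^{i-1}|_{V_1})B_{1,+}\bi$. Writing this out with the index shift $j=i-1$ yields $\gamma = 1+(\bj\bi)t^{-1}+\sum_{j=0}^\infty(\bj B_{2,+}B_2^jB_1^jB_{1,+}\bi)\,t^{-j-2}$; one then rewrites $B_{2,+}B_2^jB_1^j B_{1,+}$ using $B_2|_{V_1}=B_{2,+}B_{1,+}+B_{2,-}B_{1,-}$ from~\eqref{eqn:type-d} — actually it is cleaner to track signs directly from $B_2=\rho(s)B_1\rho(s)^{-1}$ where $s=\left[\begin{smallmatrix}0&1\\-1&0\end{smallmatrix}\right]$ — and the sign $(-1)^{j+1}$ together with the reordering $B_{1,+}B_1^jB_2^jB_{2,+}$ in the stated answer comes out of pushing all the $\rho(s)$'s through and using that $\rho(s)$ acts as $-1$ on $V_{0,-}$ and $+1$ on $V_{0,+}$, combined with $B_{1,-}B_{2,-}=0$ on $V_{0,-}$.

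The main obstacle I anticipate is precisely this bookkeeping of signs and of which maps act on which graded piece: one must be careful that the formula in the statement, with $B_{1,+}B_1^iB_2^iB_{2,+}$ (note the order and the subscripts $+$ on the outside), agrees with what the naive substitution $B_2\mapsto\rho(s)B_1\rho(s)^{-1}$ produces, and in particular that the alternating sign $(-1)^{i+1}$ is correct rather than, say, $(-1)^i$. A useful sanity check is the case $\lambda=\alpha$, $r=2$ (Example~\ref{ex:sl2-special} and~\S\ref{ss:proof-gl2}), where one can compare with the explicit $\gamma=\left[\begin{smallmatrix}1&t^{-1}\\0&1\end{smallmatrix}\right]$, and also the degenerate cases $B_{2,\pm}=0$; I would verify the formula against at least one of these before trusting it. One should also remark explicitly, as the paragraph before the proposition does, that the factor of $2$ in the first equation of~\eqref{eqn:type-d} has been absorbed by rescaling $\bi$, so that the $(\bj\bi)t^{-1}$ term and the series coefficients are normalized consistently with the standard type-$\D$ quiver-variety conventions of~\cite{nak1,nak2,hl}; apart from that normalization, no genuinely new input beyond Proposition~\ref{prop:mv} and the type-$\D$ translation is needed.
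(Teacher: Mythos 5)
Your overall skeleton is exactly the paper's: nonemptiness reduces to the inequality $m_{0,-}\le v_1$ established in the preceding discussion together with $v_1=\sum_s\max\{\lambda_s-1,0\}$ from~\eqref{eqn:v-gm}; the isomorphism is the composite of Proposition~\ref{prop:barth-rho}, the restriction of $\Theta$, and Theorem~\ref{thm:normalizer2}; nonsingularity and connectedness come from Proposition~\ref{prop:connected}; and the formula is obtained by restricting the formula of Proposition~\ref{prop:mv} to $\Lambda(V,W)^{sc,N,\rho}$. All of that is fine.

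However, the one genuinely new computation is garbled. Your intermediate claim $\bj B_2^iB_1^i\bi=\bj B_{2,+}\bigl(B_2^{i-1}B_1^{i-1}|_{V_1}\bigr)B_{1,+}\bi$ does not typecheck: in $\bj B_2^iB_1^i\bi$ the operator applied first is $B_1$, which \emph{lowers} the grading, so the composite passes through $V_{-i}$ (not $V_i$, as you wrote) and never meets $B_{2,+}:V_{0,+}\to V_1$ or $B_{1,+}:V_1\to V_{0,+}$; in particular $B_{1,+}\bi$ is not a composable pair. The factorization through the named components is only available \emph{after} reversing the order of $B_1$ and $B_2$, and that reversal is precisely where the sign comes from. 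Writing $s=[\begin{smallmatrix}0&1\\-1&0\end{smallmatrix}]$, the fixed-point conditions give $B_2=\rho(s)B_1\rho(s)^{-1}$ and $B_1=-\rho(s)B_2\rho(s)^{-1}$, while $\rho(s)^{-1}\bi=\bi$ (as $\bi(W)\subseteq V_{0,+}$) and $\bj\rho(s)=\bj$ (as $\bj$ kills $V_{0,-}$ and all $V_i$, $i\ne0$); substituting and cancelling the $\rho(s)$'s yields
\[
\bj B_2^{i+1}B_1^{i+1}\bi=(-1)^{i+1}\,\bj B_1^{i+1}B_2^{i+1}\bi,
\]
and only the right-hand side factors as $(-1)^{i+1}\,\bj B_{1,+}B_1^iB_2^iB_{2,+}\bi$, since now $\bi$ lands in $V_{0,+}$, $B_2|_{V_{0,+}}=B_{2,+}$ raises into $V_1$, and the final $B_1|_{V_1}$ contributes only its $B_{1,+}$ component because $\bj$ annihilates $V_{0,-}$. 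As you wrote it, the sign would have to be conjured afterwards by "pushing the $\rho(s)$'s through", but once you have (incorrectly) factorized there is nothing left to push through; the two steps must be done in the order above. With that correction the argument coincides with the paper's proof.
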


\begin{proof}
Recall that the $G$-equivariant bijection $\Psi:\Bun_G(\Aa^2/N)\to(\Gr_0)^\iota$ restricts to an isomorphism $\Bun_G^\xi(\Aa^2/N)\simto(\Gr_0)^{\iota,\xi}$. So the nonemptiness criterion for $(\Gr_0)^{\iota,\xi}$ follows from that for $\Bun_G^\xi(\Aa^2/N)$, seen above. Assume that these varieties are indeed nonempty. Proposition~\ref{prop:barth-rho} tells us that the $G$-equivariant isomorphism $\Theta:\fM^{\mathrm{reg}}(V,W)\simto\Bun_G^n(\Aa^2)$ restricts to an isomorphism $\fM_0^{\mathrm{reg}}(\bv,\bw)=\fM^{N,\rho,\mathrm{reg}}(V,W)\simto\Bun_G^\rho(\Aa^2/N)=\Bun_G^\xi(\Aa^2/N)$. Recall that $\fM^{N,\rho,\mathrm{reg}}(V,W)$ is nonsingular and connected by Proposition~\ref{prop:connected}.

All that remains to be proved is that the formula for the composition $\Psi\circ\Theta$ on $\fM^{\Gg_m,\rho|_{\Gg_m},\mathrm{reg}}(V,W)$ given in Proposition~\ref{prop:mv} restricts to the stated formula on $\fM_0^{\mathrm{reg}}(\bv,\bw)=\fM^{N,\rho,\mathrm{reg}}(V,W)$. This follows from the fact that, for any quadruple $(B_1,B_2,\bi,\bj)\in\Lambda(V,W)^{N,\rho}$ and any $i\geq 0$,
\[
\begin{split}
\bj B_2^{i+1} B_1^{i+1} \bi
&=\bj \left(\rho([\begin{smallmatrix}0&1\\-1&0\end{smallmatrix}])B_1\rho([\begin{smallmatrix}0&1\\-1&0\end{smallmatrix}])^{-1}\right)^{i+1} \left(-\rho([\begin{smallmatrix}0&1\\-1&0\end{smallmatrix}])B_2\rho([\begin{smallmatrix}0&1\\-1&0\end{smallmatrix}])^{-1}\right)^{i+1} \bi\\
&=(-1)^{i+1} \bj B_1^{i+1}B_2^{i+1} \bi=(-1)^{i+1} \bj B_{1,+} B_1^i B_2^i B_{2,+} \bi.
\end{split}
\]  
Here the second equality holds because $\rho([\begin{smallmatrix}0&1\\-1&0\end{smallmatrix}])$ acts as the identity on $V_{0,+}$.
\end{proof}

\begin{ex} \label{ex:disconnected}
Take $r=4$ and $\lambda=(3,0,0,-3)\in\Lambda_1^+$. The corresponding $\Z$-tuple $\bv^{\A}$ is $(\cdots,0,0,1,2,3,2,1,0,0,\cdots)$ with $v_0=3$. The relevant ordered pairs $(m_{0,+},m_{0,-})$ are $(2,0)$ and $(0,2)$; let $\xi_1$ and $\xi_2$ be the corresponding elements of $\Xi(\lambda)$. Then $(\Gr_0)^{\iota,\xi_1}$ and $(\Gr_0)^{\iota,\xi_2}$ are the connected components of $(\Gr_0^\lambda)^\iota$, and by Proposition~\ref{prop:glr} they are isomorphic to the following quiver varieties respectively:
\[
\fM_0^{\mathrm{reg}}((2,1,2,1),(4,0,0,0))\text{ and }
\fM_0^{\mathrm{reg}}((3,0,2,1),(4,0,0,0)),
\] 
where we have put the vertices of $\D_\infty$ in the order $(0,+),(0,-),1,2,3,\cdots$ and truncated all vertices where $v_i=0$, leaving quiver varieties of type $\D_4$ (except that the second is really a quiver variety of type $\A_3$, since it happens to have $v_{0,-}=0$). 
\end{ex}

\begin{rmk}
A general formula for the dimension of a Nakajima quiver variety is given in~\cite[Corollary 3.12]{nak2}. Applied to the quiver variety of type $\A$ in Proposition~\ref{prop:mv}, this recovers the well-known formula $\dim\Gr_0^\lambda=\langle\lambda,2\rho\rangle$ for any $\lambda\in\Lambda^+$, where $2\rho$ is the sum of the positive roots of $\SL(r)$. Applied to the quiver variety of type $\D$ in Proposition~\ref{prop:glr}, it gives
\begin{equation}
\dim (\Gr_0)^{\iota,\xi} = \langle\lambda,\rho\rangle + \frac{1}{4}(r^2-(m_{0,+}-m_{0,-})^2),
\end{equation}
where $\lambda\in\Lambda_1^+$ and $\xi\in\Xi(\lambda)$ corresponds to the pair $(m_{0,+},m_{0,-})$. 
\end{rmk}

\subsection{Proof of Theorem~\ref{thm:intro-gl2}}
\label{ss:proof-gl2}

Now suppose that $G=\SL(2)$ and that $\lambda=m\alpha=(m,-m)$ for $m\in\Z^+$. The corresponding $\Z$-tuple $\bv^{\A}$ is 
\[
(\cdots,0,0,1,2,\cdots,m-1,m,m-1,\cdots,2,1,0,0,\cdots)
\] 
with $v_0=m$. Hence Proposition~\ref{prop:mv} says that 
\begin{equation} \label{eqn:special-mv}
\Gr_0^{m\alpha}\cong\fM_0^{\mathrm{reg}}(\bv^{\A},2\delta_0),
\end{equation}
a quiver variety of type $\A_{2m-1}$. 

There is another isomorphism involving this quiver variety:
\begin{equation} \label{eqn:kronheimer}
\fM_0^{\mathrm{reg}}(\bv^{\A},2\delta_0)\cong\cO_{(2m)}\cap\cS_{(m,m)},
\end{equation}
where $\cO_{(2m)}$ is the regular nilpotent orbit in $\fsl(2m)$ and $\cS_{(m,m)}$ is the Slodowy slice to the orbit $\cO_{(m,m)}$. The isomorphism~\eqref{eqn:kronheimer} is a special case of Nakajima's result~\cite[Theorem 8.4]{nak1}, a reformulation of Kronheimer's result~\cite[Theorem 1]{kron}. Thus, it implicitly treats the graph of type $\A_{2m-1}$ as part of the McKay graph of $\SL(2)$, \emph{not} the McKay graph of $\Gg_m$ as in the proof of~\eqref{eqn:special-mv}; this distinction was highlighted in~\cite[Section 2(v)]{nak4}. Philosophically, this is why we obtain an interesting isomorphism by composing~\eqref{eqn:special-mv} and~\eqref{eqn:kronheimer}, namely
\begin{equation} \label{eqn:tworow}
\Gr_0^{m\alpha}\cong \cO_{(2m)}\cap\cS_{(m,m)}.
\end{equation}

\begin{rmk}
We do not know an explicit formula for the isomorphism~\eqref{eqn:kronheimer}. Instead, there is a recursive procedure for passing from a quadruple $(B_1,B_2,\bi,\bj)$ to an element of $\fsl(2m)$, which is due to Maffei~\cite[Theorem 8]{maffei} in greater generality, and is rephrased in~\cite[Lemma 4.8]{hl} in the case relevant for~\eqref{eqn:kronheimer}. Note that Mirkovi\'c and Vybornov~\cite[Section 3.2]{mvy-cr} provided explicit versions of~\eqref{eqn:kronheimer} and~\eqref{eqn:tworow} where the Slodowy slice $\cS_{(m,m)}$ is replaced by a different transverse slice to the orbit $\cO_{(m,m)}$. However, their transverse slice is not stable under the negative-transpose involution considered below.
\end{rmk} 

As a special case of Proposition~\ref{prop:mv-iota}, the involution $\iota$ of $\Gr_0^{m\alpha}$ corresponds under the isomorphism~\eqref{eqn:special-mv} to a diagram involution of $\fM_0^{\mathrm{reg}}(\bv^{\A},2\delta_0)$. As a special case of~\cite[Theorem 4.4]{hl} (and this is where the assumption $r=2$ is vital), this diagram involution of $\fM_0^{\mathrm{reg}}(\bv^{\A},2\delta_0)$ corresponds, under the isomorphism of~\eqref{eqn:kronheimer}, to an involution of $\cO_{(2m)}\cap\cS_{(m,m)}$ that is the restriction of a Lie algebra involution of $\fsl(2m)$, namely the negative transpose map with respect to a nondegenerate form on $\C^{2m}$ that is symmetric if $m$ is even and skew-symmetric if $m$ is odd. See~\cite[Section 4]{hl} for the detailed definitions.

We now consider the fixed-point subvarieties of these involutions. From~\eqref{eqn:ordered-pair} we see that $\Xi(m\alpha)$ is empty if $m$ is even, and has a unique element, corresponding to the ordered pair $(0,0)$, if $m$ is odd. Recall that we already showed that $(\Gr_0^{m\alpha})^\iota$ is empty for $m$ even and positive in Lemma~\ref{lem:even-empty}; on the other side of~\eqref{eqn:tworow}, it is well known that $\cO_{(2m)}$ does not intersect $\fso(2m)$. 

Assume henceforth that $m$ is odd. By Proposition~\ref{prop:glr}, taking fixed points on both sides of ~\eqref{eqn:special-mv} gives the isomorphism
\begin{equation} \label{eqn:explicit}
(\Gr_0^{m\alpha})^\iota\cong\fM_0^{\mathrm{reg}}((\tfrac{m+1}{2},\tfrac{m-1}{2},m-1,\cdots,2,1),(2,0,\cdots,0)),
\end{equation}
where the quiver variety is of type $\D_{m+1}$, with the vertices labelled in the same order as in Example~\ref{ex:disconnected}. (In the $m=1$ case, we have to interpret $\D_2$ as $\A_1\times\A_1$.) Taking fixed points on both sides of~\eqref{eqn:kronheimer} gives the isomorphism
\begin{equation} \label{eqn:implicit}
\fM_0^{\mathrm{reg}}((\tfrac{m+1}{2},\tfrac{m-1}{2},m-1,\cdots,2,1),(2,0,\cdots,0))\cong \cO_{(2m)}^{\mathrm{C}_m}\cap\cS_{(m,m)}^{\mathrm{C}_m},
\end{equation}   
which is a special case of~\cite[Theorem 1.2]{hl}. Finally, Theorem~\ref{thm:intro-gl2} follows by combining~\eqref{eqn:explicit} and~\eqref{eqn:implicit}, or in other words by taking fixed points on both sides of~\eqref{eqn:tworow}.

\begin{rmk}
The isomorphisms~\eqref{eqn:special-mv} and~\eqref{eqn:kronheimer} extend to isomorphisms between the natural closures of these varieties:
\begin{equation} \label{eqn:closures}
\Grbar_0^{m\alpha}\cong\fM_0(\bv^{\A},2\delta_0)\cong\cN\cap\cS_{(m,m)},
\end{equation}
where $\cN$ is the nilpotent cone of $\fsl(2m)$. The first isomorphism in~\eqref{eqn:closures} follows from~\cite[Theorem 5.2]{bf} and~\cite[Theorem 5.12]{bfg}, and the second isomorphism in~\eqref{eqn:closures} is a special case of~\cite[Theorem 8]{maffei}. The involutions of the open subvarieties considered above all extend to the closures. If $m$ is odd, we obtain the following by taking fixed points throughout~\eqref{eqn:closures}:
\begin{equation}
(\Grbar_0^{m\alpha})^\iota\cong\fM_0((\tfrac{m+1}{2},\tfrac{m-1}{2},m-1,\cdots,2,1),(2,0,\cdots,0))\cong\cN^{\mathrm{C}_m}\cap\cS_{(m,m)}^{\mathrm{C}_m},
\end{equation}
where the quiver variety is again of type $\D_{m+1}$ and $\cN^{\mathrm{C}_m}$ is the nilpotent cone of $\fsp(2m)$. For the claim about the fixed-point subvariety of $\fM_0(\bv^{\A},2\delta_0)$, see the proof of~\cite[Theorem 1.2]{hl}. 
\end{rmk}


\end{document}